\documentclass[12pt,a4paper]{article}
\usepackage{amsmath}
\usepackage{amssymb}
\usepackage{accents}
\usepackage{hyperref}
\usepackage{mathtools}
\usepackage{amsthm}
\usepackage[usenames]{color}

\usepackage{graphicx}

\newcommand{\1}[1]{{\mathbf 1}{\{#1\}}}

\newcommand{\vr}{\varrho}

\newcommand{\eps}{\varepsilon}
\newcommand{\Z}{{\mathbb Z}}
\newcommand{\N}{{\mathbb N}}

\newcommand{\B}{{\mathsf B}}

\newcommand{\M}{{\mathcal M}}

\newcommand{\G}{{\mathcal G}}

\newcommand{\HH}{{\mathcal H}}
\newcommand{\R}{{\mathbb R}}

\newcommand{\ZZ}{{\mathcal Z}}

\newcommand{\T}{{\mathcal T}}
\newcommand{\NN}{{\mathcal N}}

\newcommand{\EE}{{\mathcal E}}
\newcommand{\F}{{\mathcal F}}

\newcommand{\gs}{{\mathfrak S}}

\let\phi=\varphi
\newcommand{\A}{{\mathcal A}}

\newcommand{\hN}{\widehat N}
\newcommand{\hB}{\widehat B}

\newcommand{\hG}{\widehat G}

\newcommand{\bga}{{\bar\gamma}}

\newcommand{\diam}{{\mathop{\mathrm{diam}}}}

\newcommand{\hk}{\widehat{k}}

\newcommand{\hL}{\widehat{L}}
\newcommand{\hR}{\widehat{R}}
\newcommand{\tL}{\widetilde{L}}

\newcommand{\IP}{{\mathbb P}}
\newcommand{\IE}{{\mathbb E}}

\DeclareMathSymbol{\widehatsym}{\mathord}{largesymbols}{"62}

\newcommand{\capa}{\mathop{\mathrm{cap}}}

\newcommand{\hm}{\mathop{\mathrm{hm}}\nolimits}

\newcommand{\Es}{\mathop{\mathrm{Es}}\nolimits}

\newcommand{\dist}{\mathop{\mathrm{dist}}}

\allowdisplaybreaks

\newtheorem{theo}{Theorem}[section]
\newtheorem{lem}[theo]{Lemma}

\newtheorem{prop}[theo]{Proposition}
\newtheorem{cor}[theo]{Corollary}
\newtheorem{rem}[theo]{Remark}

\title{Deviation probabilities for global
events in two dimensions (I): independent excursions
}
\author{Orph\'ee Collin$^{1}$ \and
 Serguei~Popov$^{2}$}

\begin{document}

\maketitle

{\footnotesize
\noindent $^{~1}$Technische Universit\"at Wien, Austria
\\
\noindent e-mail:
\texttt{orphee.collin@tuwien.ac.at}


\noindent $^{~2}$Centro de Matem\'atica, University of
Porto, Porto, Portugal\\
\noindent e-mail: \texttt{serguei.popov@fc.up.pt}

}

\begin{abstract}
We consider the field of local times generated by independent
excursions of the two-dimensional simple random walk between
the boundaries of two concentric disks, and study \emph{global events}:
informally, these are the events that have to occur everywhere
(i.e., if such an event occurs on a set, then it also occurs on all
its subsets). Examples include ``the set is completely covered'',
``the maximal local time on the set does not exceed a given level'',
or ``the set does not contain a covered disk of a given radius''.
For monotone global events, we show that the deviation probabilities
(i.e., the probabilities that the event occurs with atypically few
excursions, or does not occur with atypically many of them, or vice versa)
can be obtained from the typical behaviour on different scales only,
via a general decoupling argument based on soft local times.
\\[.3cm]\textbf{Keywords:} simple random walk,
global collections of events, soft local time,
relative harmonic measure
\\[.3cm]\textbf{AMS 2020 subject classifications:}
Primary 60G50; secondary 60F10, 60J10, 60J55
\\[.3cm]This work is licensed under a Creative Commons Attribution — 4.0 Interna-
tional licence (CC-BY 4.0, https://creativecommons.org/licenses/by/4.0/).
\end{abstract}

\section{Introduction and main results}
\label{s_intro}

Suppose that there is a field~$\ell=(\ell(x), x\in \Z^2)$ of \emph{local times}
on~$\Z^2$.
These are thought of as \emph{visit counts}
to the sites of~$\Z^2$ obtained in some natural way,
e.g.\ with a (discrete-time) random walk,
or independent excursions of a random walk.
We thus naturally assume that
$\ell(x)\in\Z_+:=\{0,1,2,\ldots\}$,
and we write $\ell\leq \ell'$ on $A\subset \Z^2$
if $\ell(x)\leq \ell'(x)$ for all $x\in A$.

We are going to study events that depend on the
local times field restricted to a finite set $A\subset \Z^2$;
i.e., such an event~$E^A$ is a subset of~$\Z_+^A$.
We call an event~$E^A$ \emph{increasing} if
$\ell\in E^A$ implies $\ell'\in E^A$ for all
$\ell'\geq \ell$, and \emph{decreasing} when
its complement is increasing; an event is called
\emph{monotone} if it is increasing or decreasing.

Next, assume that we have a collection of
events $\EE=(E^A, A\subset \Z^2\text{ finite})$. We call~$\EE$
a \emph{global collection of events}
(or simply a \emph{global event}, with some abuse of terminology) when, for all finite $A\subset \Z^2$,
$E^A$ implies~$E^{A'}$
for all~$A'\subset A$.
We always assume that~$\EE$ is translation invariant:
for all finite $A\subset\Z^2$ and $x\in\Z^2$,
$\ell\in E^{A+x}$ if and only if $\ell(\cdot+x)\in E^A$. Informally, a global
event is something that has to occur everywhere. As a natural example,
consider the events $E^A=\{\ell(x)\geq 1 \text{ for all }x\in A\}=\{A \text{ is covered}\}$:
note that, if a set is covered, then all of its subsets
are covered.
Other natural examples of global events are:
\begin{itemize}
 \item[(i)] minimal local time is at least~$u>0$;
 \item[(ii)] maximal local time does not exceed~$u>0$;
 \item[(iii)] in each disk of radius~$r$ which is fully
 inside~$A$, there is an uncovered site;\footnote{Formally,
if no such disk fits inside~$A$, we say that the event occurs.}
 this can be seen as a ``complement'' of an event
 ``there is a completely covered disk of radius~$r$
 somewhere''.
\end{itemize}
All these examples are monotone; (i) and ``$A$ is covered'' are increasing, and (ii)--(iii)
are decreasing. Of course, there are also global
events which are not monotone (for example,
``no two neighbouring sites of~$A$ have equal local times''),
but we do not consider them in this paper
since our methods only work for monotone events.

Let $\|\cdot\|$ be the Euclidean norm, and
let $\B(x,r)=\{y\in\Z^2 : \|x-y\|\leq r\}$
be the (discrete) disk of radius~$r$ centred at~$x$;
we abbreviate $\B(r):=\B(0,r)$.
We write $x\sim y$ when $x,y\in\Z^2$ are neighbours
(i.e., $\|x-y\|=1$), and denote by
$\partial A=\{x\in A: \text{there exists }y\notin A
\text{ such that }x\sim y\}$
the (internal) boundary of $A\subset\Z^2$.
Assume from now on
that~$\EE$ is a global collection of monotone events, and that, for a given~$n\geq 1$ (not necessarily integer), the local time field is obtained in the following
way: take~$m$ independent excursions between~$\B(n)$
and~$\partial\B(e n)$ with their starting points
chosen according to the \emph{relative}
harmonic measure~$\hm_{\B(n)}^{\B(en)}$
(see the precise definition in Section~\ref{s_exc_process} below).
Specifically, for $x\in\B(n)$, we let~$\ell(x)$ be the
total number of visits to~$x$ by all these~$m$ excursions
(i.e., the sum, over the~$m$ excursions, of the number of times
the corresponding trajectory is at~$x$, the initial site
of the excursion included), and we set $\ell(x)=0$ for all
$x\notin\B(n)$; that is, the visits to the sites
of $\B(en)\setminus\B(n)$ are not counted.
Let~$\IP_n^{(m)}[\cdot]$ be the associated
probability measure; we drop the lower index
when no confusion can occur.
For increasing
events, let us define
\begin{align}
 k_n &= \max\big\{j: \IP^{(j)}[E^{\B(n)}]\leq \tfrac{1}{7}\big\},
 \label{def_kn}
 \\
 k'_n &= \min\big\{j: \IP^{(j)}[E^{\B(n)}]\geq \tfrac{6}{7}\big\}.
 \label{def_hkn}
\end{align}
Here and in the sequel, $j$ ranges over~$\Z_+$, and we use the conventions
$\max\emptyset:=-\infty$, $\min\emptyset:=+\infty$, and $\max G:=+\infty$
for an unbounded $G\subset\Z_+$; so, the thresholds are always defined,
with values in $\Z_+\cup\{\pm\infty\}$.
Note that $\IP^{(j)}[E^{\B(n)}]$ is nondecreasing in~$j$
(adding an excursion can only increase the local times),
so $k_n<k'_n$ whenever $k'_n<\infty$; the specific values~$\frac{1}{7}$ and~$\frac{6}{7}$
are chosen for convenience (see the proofs in Section~\ref{s_proofs_main}).
One can say (for increasing events)
that the necessary excursion count to cause~$E^{\B(n)}$
is typically in $[k_n,k'_n]$.
Note that we do \emph{not} claim/require
the monotonicity of the above thresholds in~$n$.
Note also that these thresholds are defined for all~$n\geq 1$
(with the same collection~$\EE$); in the sequel, we will need
to compare them on different scales, i.e., for disks~$\B(n)$
of different radii (we refer to the radius~$n$ as the \emph{scale}).
These radii need not be integers (e.g., the thresholds~$k_{n/h}$
with real $h\geq 1$ appear in~\eqref{eq_regularity} below).
Analogously, for decreasing events
(then, $\IP^{(j)}[E^{\B(n)}]$ is nonincreasing in~$j$) we define
\begin{align}
 \underline k_n &= \max\big\{j: \IP^{(j)}[E^{\B(n)}]\geq \tfrac{6}{7}\big\},
 \label{def_kn_dec}
 \\
 \underline k'_n &= \min\big\{j: \IP^{(j)}[E^{\B(n)}]\leq \tfrac{1}{7}\big\}.
 \label{def_hkn_dec}
\end{align}
Again, $[\underline k_n,\underline k'_n]$
is the interval to which the \emph{maximal}
(for occurrence of $E^{\B(n)}$)
excursion count belongs.

Thus, for increasing events, $\IP^{(m)}[E^{\B(n)}]$ and
$\IP^{(m')}[(E^{\B(n)})^\complement]$
should typically be small when~$m$ is less than~$k_n$ and~$m'$ is greater than~$k'_n$;
likewise, for decreasing events, $\IP^{(m)}[E^{\B(n)}]$ and
$\IP^{(m')}[(E^{\B(n)})^\complement]$
should typically be small when~$m$ is greater than~$\underline k'_n$ and~$m'$ is less than~$\underline k_n$.
The main goal of this paper is to
obtain explicit estimates on these probabilities.
Here, we show that there is a fairly
general argument which permits us to obtain the deviation probabilities for global collections
of events only using the information about
their typical behaviour.
However, we first discuss some regularity assumptions on the quantities defined
in~\eqref{def_kn}--\eqref{def_hkn_dec};
these are not very restrictive (to the best of
the authors' knowledge, they are satisfied for
all \emph{interesting} applications of our results),
but simplify the exposition a lot.

\medskip
\noindent
\textbf{Condition~L.} Let $0<\kappa<M<\infty$
be some fixed constants.
In the case of increasing
events, we say that~$\EE$ satisfies Condition~L
at~$n$ if $k_n\geq \ln^\kappa n$
and $k'_n\leq \ln^M n$. In the case of decreasing
events, we say that~$\EE$ satisfies Condition~L
at~$n$ if $\underline k_n\geq \ln^\kappa n$
and $\underline k'_n\leq \ln^M n$.
Note that Condition~L at~$n$ implies that the corresponding thresholds
at~$n$ are finite (those on the other scales need not be).

\medskip

Next, let us abbreviate $\alpha:=0.1$
(this is the value of the parameter~$\tilde\alpha$
of~\eqref{admissible_collection} used in the proofs of our main results;
it controls the relative size of the smaller disks placed into a larger one).
Now we formulate our main result for the deviation
probabilities in the case when the event~$E^{\B(n)}$ occurs atypically, i.e., (for increasing events) the excursion count is ``lower than it should be'':
\begin{theo}
\label{t_exc_noncovering}
Fix $\eps_1\in(0,1)$ and $\eps_2>0$.
There exist positive constants $c_0,c_1,c_2,\eps_3$ and~$n_0$
(depending only on~$\kappa$, $M$, $\eps_1$, and~$\eps_2$)
such that the following holds for all $n\geq n_0$ and for
any global collection~$\EE$ of increasing events which satisfies
Condition~L at~$n$ (here and in Theorem~\ref{t_exc_covering},
$g_n$ is a nonnegative integer).
If $0\leq g_n\leq (1-\eps_1)k'_n$, then
\begin{equation}
\label{eq_exc_noncovering_lower}
\IP^{(k'_n-g_n)}[E^{\B(n)}] \geq
c_0\exp\Big(-c_1 \frac{g_n^2}{k'_n}\Big).
\end{equation}
If $0\leq g_n\leq (1-\eps_1)k_n$ and
\begin{equation}
\label{eq_regularity}
 k_n-\tfrac{1}{2}g_n\leq k_{n/h}
 \quad\text{for all }1\leq h\leq \Big(\frac{\alpha g_n}{\sqrt{k_n}}\Big)^{1+\eps_2},
\end{equation}
then
\begin{equation}
\label{eq_exc_noncovering_upper}
 \IP^{(k_n-g_n)}[E^{\B(n)}]
 \leq \exp\Big(-c_2 \frac{g_n^2}{k_n}\Big).
\end{equation}
In the case when~$\EE$ is a global collection
of decreasing events (which satisfies Condition~L at~$n$),
the estimate in~\eqref{eq_exc_noncovering_lower}
(with $\underline k_n$ in place of~$k'_n$ in its right-hand side)
is valid
for $\IP^{(\underline k_n+g_n)}[E^{\B(n)}]$ when
$0\leq g_n\leq \eps_3\underline k_n$,
and, if $0\leq g_n\leq (1-\eps_1)\underline k'_n$ and
\begin{equation}
\label{eq_regularity_dec}
 \underline k'_n+\tfrac{1}{2}g_n\geq \underline k'_{n/h}
 \quad\text{for all }1\leq h\leq \Big(\frac{\alpha g_n}{\sqrt{\underline k'_n}}\Big)^{1+\eps_2},
\end{equation}
the estimate in~\eqref{eq_exc_noncovering_upper}
(with $\underline k'_n$ in place of~$k_n$ in its right-hand side)
is valid for $\IP^{(\underline k'_n+g_n)}[E^{\B(n)}]$.
\end{theo}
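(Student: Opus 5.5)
We use two tools. The first is the soft local times coupling between excursions on different scales, together with the fact that a small disk $\B(y,r)\subset\B(n)$ receives roughly a Gaussian number of excursions. The second is the global property: if $E^{\B(n)}$ occurs, then $E^{A}$ occurs for every translate $A$ of a small disk inside $\B(n)$. We treat increasing events first; the decreasing case is symmetric, with the roles of ``few'' and ``many'' excursions exchanged.

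\emph{Lower bound \eqref{eq_exc_noncovering_lower}.} We will show that with $m=k'_n-g_n$ excursions the occurrence of $E^{\B(n)}$ still has probability at least $c_0 e^{-c_1g_n^2/k'_n}$. The idea is to use soft local times to compare $m$ excursions with a Poissonian family in which the effective intensity is tilted upward by a factor $1+g_n/k'_n$.

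Concretely, let $\xi$ be the soft local time of the first $m$ excursions between $\B(n)$ and $\partial\B(en)$. This is a sum of $m$ i.i.d.\ densities with respect to $\hm_{\B(n)}^{\B(en)}$. With probability $\asymp e^{-cg^2/k'}$, a moderate-deviation event forces $\xi\ge k'_n(1+\delta)\,\hm$ uniformly. This follows from a Gaussian lower deviation for sums of bounded i.i.d.\ densities, using $k'_n\ge\ln^\kappa n$ to handle uniformity over the $O(n)$ boundary sites.

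On this event, soft local times produce a coupling under which the $m$ excursions contain the first $k'_n$ excursions of an independent copy. Since the copy satisfies $E$ with probability at least $6/7$ and the event is increasing, we get the bound after multiplying by the tilt probability. For decreasing events, the same argument with a downward tilt gives the bound for $\underline k_n+g_n$, provided $g_n\le\eps_3\underline k_n$, so that the deviation stays in the Gaussian regime.

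\emph{Upper bound \eqref{eq_exc_noncovering_upper}.} This is the main step. Choose $h$ with $h^{1+\eps_2}\approx \alpha g_n/\sqrt{k_n}$, so $h$ lies in the range allowed by \eqref{eq_regularity}, and set $r=n/h$. Place $N\asymp h^{2}$ (up to $\alpha$-dependent constants) disjoint disks $\B(y_i,r)$ with $\B(y_i,er)\subset\B(n)$, spaced so that they are well separated. Each excursion from $\B(n)$ to $\partial\B(en)$ produces a number of excursions between $\B(y_i,r)$ and $\partial\B(y_i,er)$; with $k_n-g_n$ big excursions the expected count is about $k_n-g_n$, up to a factor $1+O(\text{separation})$.

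By \eqref{eq_regularity}, $k_{r}\ge k_n-g_n/2$. If every small disk receives at most $k_n-g_n/4$ excursions, then using soft local times to decouple the small disks we get
\[
\IP\big[\text{all } E^{\B(y_i,r)}\big]\le \big(\tfrac17+o(1)\big)^{N'}
\]
for a large subfamily of $N'$ nearly independent disks. The number of excursions can be made slightly smaller than $k_r$ by coupling via soft local times, at a cost of a further $e^{-cg^2/k}$-type error.

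The probability that the excursion counts are too large, namely exceed $k_n-g_n/4$ for many disks, is bounded by $\exp(-cg_n^2/k_n)$ through a concentration estimate for the total soft local time of the big excursions. With the choice of $h$, $(1/7)^{N'}\le e^{-c h^2}\le e^{-cg_n^2/k_n}$, since $h^2\ge (g_n/\sqrt{k_n})^{2/(1+\eps_2)}$ and $g_n/\sqrt{k_n}\lesssim\sqrt{\ln^M n}$. This last exponent comparison is delicate, and we expect to need $\eps_2$ together with a multiscale iteration to close it.

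The main obstacle is the decoupling of the $N'$ small disks while keeping their excursion counts jointly controlled. The first approach we would try is the soft-local-time simultaneous coupling of excursions inside disjoint annuli given the entrance and exit points (via the relative harmonic measure). Its errors are of order $e^{-c\sqrt{k}}$, which is affordable under Condition~L.
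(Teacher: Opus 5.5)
\textbf{Lower bound.} The soft-local-time tilt does not do what you need. For i.i.d.\ excursions started at $\hm_{\B(n)}^{\B(en)}$, the free construction gives soft local time $(\xi_1+\dots+\xi_m)\hm_{\B(n)}^{\B(en)}$. Here the $\xi_i$'s are independent of the excursions, and exactly $m$ Poisson points lie below the graph however high it is. Conditioning on a large soft local time therefore changes nothing about the $m$ excursions. A coupling in which they ``contain'' the first $k'_n>m$ excursions of an independent copy is impossible.

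What is missing is a source of randomness you can pay for while leaving the contents of the local-time field untouched. The paper finds it in the number of $n\to e^{1/2}n$ returns made by each big excursion, which is approximately Geometric$(1/2)$. A coin-tossing regeneration scheme cuts these returns into i.i.d.\ blocks whose contents are independent of how many blocks $v$ big excursions consume. That number is sandwiched between two negative binomials independent of the blocks. One then pays $e^{-cg_n^2/k'_n}$ only for it to be as large as it typically is with $k'_n$ big excursions. The same construction explains the restriction $g_n\le\eps_3\underline k_n$ in the decreasing case: there are always at least as many blocks as big excursions. Staying in a Gaussian regime is not the reason.

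\textbf{Upper bound.} There are four problems.
\begin{itemize}
\item Your choice $h^{1+\eps_2}\approx\alpha g_n/\sqrt{k_n}$ is far below what~\eqref{eq_regularity} permits, namely $h$ up to $(\alpha g_n/\sqrt{k_n})^{1+\eps_2}$. With your choice, $h^2\ll g_n^2/k_n$, so the inequality $e^{-ch^2}\le e^{-cg_n^2/k_n}$ is false, not merely delicate.
\item The counts must be pushed below $k_n-\beta g_n$ with $\beta>\frac12$, i.e.\ below $k_{n/h}$. Pushing them below $k_n-g_n/4$ is not enough.
\item Most importantly, the count in a single disk of radius $n/h$ has per-big-excursion variance of order $\ln h$. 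It therefore exceeds such a threshold with probability of order $e^{-cg_n^2/(k_n\ln h)}\gg e^{-cg_n^2/k_n}$, so controlling every disk costs a logarithm. Concentration of the total count (Lemma~\ref{l_excurs_control_excursions}), together with a union bound for lower deviations at level $\Lambda\asymp g_n\ln m$, only yields a proportion $\asymp 1/\ln m$ of disks with small counts. Against such a small proportion, the per-disk failure bound $\frac17$ is useless in any Chernoff or union-over-subsets bound.
\item The coupling errors are $e^{-ct^2/k}$ with $t\asymp g_n$, possibly with a logarithm in the denominator, not $e^{-c\sqrt{k}}$. An error of order $e^{-c\sqrt{k}}$ would not be affordable once $g_n\gg k_n^{3/4}$.
\end{itemize}

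The paper's remedy for the third problem is a genuine two-scale construction. It uses only $m^2\asymp(g_n/\sqrt{k_n})^{2\eps_2}$ first-level disks. Inside each it places $s^2$ second-level disks, with $s\asymp g_n/\sqrt{k_n}$. This shows that a first-level disk fails with probability $\psi_{n,m}\le\exp\big(-cg_n^2/(k_n\ln(g_n/\sqrt{k_n}))\big)$. The Chernoff bound for the proportion $1-a_n$ of good first-level disks, with $a_n=\eps'/\ln m$, then removes the logarithm. Your sketch only gestures at this step.
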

The conditions~\eqref{eq_regularity}--\eqref{eq_regularity_dec} might look technical, but they are satisfied (and by a large margin) in typical examples, for all large enough~$n$. For instance, in covering problems one has $k_n = 2\ln^2 n - (1+o(1))\ln n\,\ln\ln n$ (or, more coarsely, $k_n = (c+o(1))\ln^2 n$), and in both cases it is straightforward to check that $k_n-\frac{1}{2}g_n \le k_{n/h}$ holds for all $h \le \exp(cg_n/\ln n)$, with a small enough constant $c>0$ (which is much more than what is required in~\eqref{eq_regularity}),
provided that
$g_n\in\big[c'\sqrt{k_n}\ln k_n,(1-\eps_1)k_n\big]$ in the former case and
$g_n\in\big[c'k_n,(1-\eps_1)k_n\big]$ in the latter one,
with an arbitrary fixed $c'>0$
(the lower bounds on~$g_n$ are needed because
the above asymptotics determine the differences $k_n-k_{n/h}$
only up to $o(\ln n\ln\ln n)$ (respectively, $o(\ln^2 n)$) terms,
which have to be dominated by~$g_n$).
Note also that, for $g_n<\alpha^{-1}\sqrt{k_n}$, the condition~\eqref{eq_regularity} is void.
These conditions are deliberately local in~$n$:
when the global event depends
on~$n$ through a threshold (e.g.\ $\{\min_{x\in A}\ell(x) \ge u\}$ with $u=c\ln^2 n$, see Section~\ref{s_LT_extrema}),
the collection~$\EE$ itself depends on~$n$, and all the thresholds (including~$k_{n/h}$ in~\eqref{eq_regularity}) are those of the collection that corresponds to~$n$; this is why, in our main results, the assumptions are only imposed at the given~$n$, and the constants (including~$n_0$) do not depend on~$\EE$: in this way, these results can be applied with a different collection for each~$n$.

Note that, in the decreasing case, the lower bound is only obtained
for $g_n\leq \eps_3\underline k_n$, i.e., for at most
$(1+\eps_3)\underline k_n$ excursions.
This restriction is due to the construction used in the proof
(there, the number of regeneration blocks cannot be smaller than
the number of excursions, see the end of the proof of the lower bound
in Section~\ref{s_proofs_main}); we do not claim that it is necessary.

Then, to formulate our result for the case when
the event~$E^{\B(n)}$ atypically fails to occur, we need a few definitions.
If the event $(E^{\B(n)})^\complement$
occurs, this means that $(E^{A})^\complement$
occurs for \emph{some} $A\subset \B(n)$;
in that case we say that~$A$ is a witness set
for $(E^{\B(n)})^\complement$ (or that $(E^{\B(n)})^\complement$ is witnessed on~$A$).
Let $\diam(A)=\max_{x,y\in A}\|x-y\|$ denote
the diameter of a finite set $A\subset \Z^2$.
So, on the event~$(E^{\B(n)})^\complement$, we define
the random variable
\[
 \vr_n = 1\vee \min_{\substack{A\subset\B(n):\\
  (E^{A})^\complement \text{ occurs}}}\diam(A)
\]
to be the smallest diameter of a set where
the event~$(E^{\B(n)})^\complement$ is witnessed
(to avoid some formal issues,
we set $\vr_n=1$ when the complementary event
is witnessed on a singleton).
Formally, we also set $\vr_n=+\infty$ on~$E^{\B(n)}$.
Then, we have
\begin{theo}
\label{t_exc_covering}
Fix $\eps_1\in(0,1)$.
There exist constants $C\in\R$, $c_1,c_2,c_3>0$, $\gamma_0>1$, and~$n_0$
(depending only on~$\kappa$, $M$, and~$\eps_1$),
and a (small enough) universal constant~$\tilde c>0$,
such that the following holds for all $n\geq n_0$ and for
any global collection~$\EE$ of monotone events which satisfies
Condition~L at~$n$.
\begin{itemize}
\item[(i)] If the events are increasing, then, for any integer $g_n\geq 0$,
\begin{equation}
\label{eq_exc_covering_lower}
 \IP^{(k_n+g_n)}\big[(E^{\B(n)})^\complement\big]
 \geq \sup_{1\leq r\leq n}
 \Bigg(
   \IP^{(k_n)}[\vr_n\leq r]
   \times \Big(1-\frac{1}{1\vee(\ln \frac{n}{r}+C)}\Big)^{g_n}
  \Bigg).
\end{equation}
If the events are decreasing and $g_n\leq (1-\eps_1)\underline{k}'_n$,
then
\begin{align}
\lefteqn{\IP^{(\underline{k}'_n-g_n)}
\big[(E^{\B(n)})^\complement\big]}
\nonumber\\
&\geq
\sup_{\substack{1\leq r\leq n,\\ \gamma\geq \gamma_0}}
\Bigg(
\Big(
\IP^{(\underline{k}'_n)}[\vr_n\leq r]
-c_1\frac{n^2}{r^2}
\exp\Big(-\frac{c_2\gamma\underline{k}'_n}{\ln\frac{en}{r}}\Big)
\Big)
\times
\Big(0\vee\Big(1-\frac{c_3\gamma}{\ln\frac{en}{r}}\Big)\Big)^{g_n}
\Bigg).
\label{eq_exc_covering_lower_decreasing}
\end{align}
\item[(ii)] If the events are increasing and
$g_n\leq (1-\eps_1)k'_n$, then
\begin{equation}
\label{eq_exc_covering}
 \IP^{(k'_n+g_n)}\big[(E^{\B(n)})^\complement\big]
 \leq m^{-2}
\end{equation}
for every $m\in\N$ such that
\begin{equation}
\label{df_m_n_covering}
m\leq \exp\Big(\tilde c\frac{g_n}{\sqrt{k'_n}}\Big)
\quad\text{and}\quad
k'_n+\tfrac{1}{2} g_n \geq k'_{\alpha^{-1}mn}.
\end{equation}
If the events are decreasing and $g_n\leq(1-\eps_1)\underline k_n$,
then~\eqref{eq_exc_covering}
is valid for $\IP^{(\underline k_n-g_n)}\big[(E^{\B(n)})^\complement\big]$,
now for every $m\in\N$ such that
\begin{equation}
\label{df_m_n_covering_dec}
m\leq \exp\Big(\tilde c\frac{g_n}{\sqrt{\underline k_n}}\Big)
\quad\text{and}\quad
\underline k_n-\tfrac{1}{2} g_n \leq \underline k_{\alpha^{-1}mn}.
\end{equation}
In both cases, we denote by~$m_n$ the maximal such~$m$
(with $m_n:=1$ if there is none), so that the corresponding
probability is at most~$m_n^{-2}$.
\end{itemize}
\end{theo}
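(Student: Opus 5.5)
We split into the lower bounds (i) and the upper bounds (ii), using excursion decompositions and soft-local-time couplings (assumed from the later sections).

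\textbf{Part (i), increasing case.} Condition on the first $k_n$ excursions. Take a witness set $A$ of diameter at most $r$, so $A\subset\B(x,r)$ for some $x\in\B(n)$ and $(E^{A})^\complement$ occurs; this has probability at least $\IP^{(k_n)}[\vr_n\le r]$. Since the events are increasing, $(E^{A})^\complement$ is decreasing in the local times. Hence $(E^{\B(n)})^\complement$ still occurs after adding $g_n$ independent excursions if none of them enters $\B(x,r)$. The $g_n$ extra excursions are independent of the first $k_n$ and of each other. So it suffices to show that an excursion started from $\hm_{\B(n)}^{\B(en)}$ avoids $\B(x,r)$ with probability at least $1-1/(1\vee(\ln\frac{n}{r}+C))$, uniformly in $x\in\B(n)$. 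This follows from the standard two-dimensional hitting estimate for a disk of radius $r$ by a walk killed at $\partial\B(en)$: the hitting probability is $\ln(en/\|y-x\|)/\ln(en/r)$ up to $O(1/\ln)$ errors. Averaging over the start point $y$ gives a bound of order $1/(\ln\frac nr+C)$. Taking the supremum over $r$ yields the first inequality.

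\textbf{Part (i), decreasing case.} Now the witness property is increasing in the local times. Removing $g_n$ of the $\underline k'_n$ excursions must therefore not change the local times on the witness disk $\B(x,r)$. We use exchangeability: with $\underline k'_n-g_n$ excursions, compare with the configuration obtained by adding $g_n$ more. We need the $g_n$ removed excursions to be among those that do not hit $\B(x,r)$. The number $N$ of excursions hitting a fixed disk is Binomial with parameter $p\asymp 1/\ln\frac{en}{r}$. A union bound over $O(n^2/r^2)$ disk centres, combined with a Chernoff bound, shows that $N\le\gamma\underline k'_n p$ except with probability $c_1\frac{n^2}{r^2}\exp(-c_2\gamma\underline k'_n/\ln\frac{en}{r})$. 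Given the witness event and $N$, choose the $g_n$ removed excursions uniformly among all of them. The probability that all of them avoid the disk is at least $(1-c_3\gamma/\ln\frac{en}{r})^{g_n}$, with a $0\vee$ in front for safety. Removing excursions that avoid the disk keeps $\ell$ unchanged on $A$.

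\textbf{Part (ii).} Take the increasing case; the decreasing case is symmetric. The idea is a renormalisation from the larger scale $N=\alpha^{-1}mn$. Tile a large disk with roughly $m^2$ disjoint translates of $\B(n)$ spaced so that the annuli $\B(y,en)$ are well separated. Suppose $\B(n)$ fails $E$ with $k'_n+g_n$ excursions with probability $q$. Via soft local times, couple the excursions between $\B(N)$ and $\partial\B(eN)$ so that each small disk receives at least $k'_n+g_n$ excursions, nearly independently across disks. This uses the concentration of the soft local time: the count deviates by $g_n/2$ with probability at most $\exp(-c\,g_n^2/k'_n)$, which is small compared to $m^{-2}$ by the first condition in~\eqref{df_m_n_covering}. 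For this we start with $k'_N\le k'_n+\tfrac12 g_n$ excursions at scale $N$; the ratio of excursion counts across scales is $\approx1$ up to log corrections absorbed by the choice of $\alpha$. Since the event is global, $E^{\B(N)}$ implies $E$ on every small disk. Failures are monotone under adding excursions, so
\[
1-\tfrac67\ \ge\ \IP[\text{some small disk fails}]\ \ge\ 1-(1-q)^{cm^2}-\text{error}.
\]
This forces $q\le m^{-2}$ after adjusting constants; we make the coupling errors independent-ish using the decoupling.

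\textbf{Main obstacle.} The hard step is the soft-local-time decoupling in (ii). It must simultaneously provide approximate independence across $m^2$ disks and conditional excursion counts that dominate $k'_n+g_n$, all with errors of order $\exp(-\tilde c\, g_n/\sqrt{k'_n})$.
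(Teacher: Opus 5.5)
Your architecture is the paper's in all three places. For the increasing lower bound, fresh excursions avoid the witness. For the decreasing lower bound, $g_n$ excursions are removed uniformly, the excursions visiting the witness are counted via a union bound over a deterministic family of $O(n^2/r^2)$ disks, and exchangeability finishes; this part matches the paper essentially step by step. For (ii), $m^2$ translates of $(\B(n),\B(en))$ inside $\B(\alpha^{-1}mn)$ are compared with independent excursions via soft local times. However, two steps are not established as written.

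\textbf{Increasing case of (i).} The statement requires the escape probability $1-\frac{1}{1\vee(\ln\frac nr+C)}$ with numerator exactly $1$. This is what makes the second factor $\exp(-\frac{g_n}{\ln(n/r)}(1+o(1)))$. Your argument only gives a hitting probability ``of order'' $1/(\ln\frac nr+C)$, i.e.\ an escape bound $1-\frac{C'}{\ln\frac nr+C}$. Your formula $\ln\frac{en}{\|y-x\|}/\ln\frac{en}{r}$ is the concentric one. For an off-centre disk the numerator must be the Green's function of $\B(en)$, which differs from $\ln\frac{en}{\|y-x\|}$ by $O(1)$. That error is the same size as the averaged main term.

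The missing fact is the position-independent identity behind Lemma~\ref{l_exc_escape_A}. By Lemma~\ref{l_expected_generated} and~\eqref{expr_esc_disks}, one excursion from $\hm_{\B(n)}^{\B(en)}$ makes on average $|A|/\Es_{\B(n)}^{\B(en)}=\frac{2}{\pi}|A|(1+O(n^{-1}))$ visits to $A$, wherever $A\subset\B(n)$ sits. From any point of $A$, the walk makes at least $\frac{2}{\pi}|A|(\ln\frac nr+O(1))$ visits to $A$ before hitting $\partial\B(en)$. Conditioning on the first entrance then gives $p_A(\ln\frac nr+O(1))\leq 1+O(n^{-1})$. Your averaging over $y$ can only be rescued by noticing that the $\hm$-average of $G_{\B(en)}(\cdot,x)$ does not depend on $x$, which is this same fact.

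\textbf{Part (ii).} As written, your coupling goes the wrong way. For increasing events you need the real excursions in each $B_i$ to be \emph{among} the first $k'_n+g_n$ independent ones, i.e.\ $\ZZ_i\subset\widetilde\ZZ_i(k'_n+g_n)$. Then failure passes from the larger independent field to the smaller real one. If each disk ``receives at least $k'_n+g_n$'', your inequality $\IP[\text{some small disk fails}]\geq 1-(1-q)^{m^2}-\text{error}$ does not follow.

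Starting from $k'_n+\frac12 g_n$ big excursions does give the correct inclusion. But the per-disk error is $C\exp(-cg_n^2/(k'_n\ln m))$ (Corollary~\ref{c_count_SLT}, since $\ln(e\alpha^{-1}m)\leq C\ln m$), not $\exp(-cg_n^2/k'_n)$. The reason is that one big excursion misses a given $B_i$ with probability $1-O(1/\ln m)$, and otherwise produces a geometric number of $B_i$-excursions with mean of order $\ln m$. The mean count is $1+O(n^{-1})$ with no logarithmic correction, since both pairs have radius ratio $e$; nothing is ``absorbed by $\alpha$''.

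This $\ln m$ is exactly why the first condition in~\eqref{df_m_n_covering} reads $m\leq\exp(\tilde c g_n/\sqrt{k'_n})$. It yields $g_n^2/(k'_n\ln m)\geq\tilde c^{-2}\ln m$, so the union bound over the $m^2$ disks is at most $e^{-1}$ for small $\tilde c$ (the case $m=1$ is trivial).

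Finally, there is no constant to adjust in $q\leq m^{-2}$. You need exactly $m^2$ disks, which $\alpha=0.1$ permits as an admissible collection. The independence across disks is also exact rather than approximate: the free constructions on disjoint pairs are independent by Corollary~\ref{c_SLT_domination}. Hence $\frac67\leq e^{-1}+(1-q)^{m^2}$, which gives $(1-q)^{m^2}\geq e^{-1}$ and $q\leq m^{-2}$.
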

Note that the second factor in
the right-hand side of~\eqref{eq_exc_covering_lower} equals
$\exp\big(-\frac{g_n}{\ln (n/r)}(1+o(1))\big)$
when $n/r\to\infty$.
Note also that decreasing~$\tilde c$ only makes the conditions
\eqref{df_m_n_covering}--\eqref{df_m_n_covering_dec} more restrictive,
so Theorem~\ref{t_exc_covering}~(ii) remains valid with any
smaller~$\tilde c$ (we use this observation in Section~\ref{s_appl}).

It is interesting to notice
the asymmetry between 
the two situations: for~$E^{\B(n)}$
to occur atypically, the local times have to be modified ``everywhere'',
and the cost of that is of Gaussian type, as in~\eqref{eq_exc_noncovering_lower}--\eqref{eq_exc_noncovering_upper};
on the other hand, for~$E^{\B(n)}$ not to occur
it is enough that the complementary event is witnessed on one
(possibly small) set, which is typically much cheaper
(compare Corollaries~\ref{c_CT_noncov} and~\ref{c_CT_cov},
or Corollaries~\ref{c_maxdisk_more} and~\ref{c_maxdisk_less} below).


At this point, let us discuss the differences between the independent excursions and the random walk on the torus. The latter case is left for a companion paper, but we can already note that (e.g., in the case of increasing events), while the typical behaviour and the upper tail deviations are essentially the same, the probability of lower deviations (as in Theorem~\ref{t_exc_noncovering}) is significantly smaller for the random walk on the torus.
The reason is that, for the random walk, if we force it to visit a certain set more often, then it must necessarily visit the complement of that set less often up to a fixed time; on the other hand, one can ``deviate'' the excursions from~$\B(n)$ at a considerably smaller cost.

To compare these two settings, recall the correspondence between
the time of the walk and the excursion counts. Consider the simple
random walk on the torus $\Z^2_n:=\Z^2/n\Z^2$, and let~$s$ be of order~$n$
and such that $s<\frac{n}{2e}$ (so that~$\B(es)$ can be seen as a subset
of~$\Z^2_n$). Then, the mean time between the starts of two consecutive
excursions between~$\B(s)$ and~$\partial\B(es)$ is
$\frac{2}{\pi}n^2\big(1+O(n^{-1})\big)$ (see Lemma~\ref{l_interexc_time}
and~\eqref{expr_esc_disks} below), so~$t$ steps of the walk correspond
to approximately $\frac{\pi t}{2n^2}$ such excursions; in particular,
$\frac{4}{\pi}\theta n^2\ln^2 n$ steps correspond to $2\theta\ln^2 n$
excursions.

This difference is already visible in the cover-time problem. For the random walk on a two-dimensional torus, the main result of~\cite{CGPV13} states that the cover time~$\T_n$ satisfies, for $\theta\in(0,1)$,
\[
  \IP\big[\T_n \le \tfrac{4}{\pi}\,\theta\,n^2\ln^2 n\big]
  = \exp\big(-n^{2(1-\sqrt{\theta})+o(1)}\big).
\]
This is of much smaller order than the corresponding deviation probability in the independent-excursions setting
(indeed, Theorem~\ref{t_exc_noncovering} implies
that, with $2\theta\ln^2 n$ independent excursions, the probability
that~$\B(n)$ is covered is at least
$\exp\big(-c(1-\theta)^2\ln^2 n\big)$, where $c>0$ depends on~$\theta$;
see Corollary~\ref{c_CT_noncov} below).
It is also worth noting that, in contrast to the case 
of excursions, finding sharp lower bounds for 
the probability of the covering event may be nontrivial
(see~\cite{LSX26} for the case of tori in dimensions $d\geq 3$).

Let us briefly comment on the related literature.
The typical behaviour of observables of the above kind is by now
rather well understood: the leading-order term of the cover time
of the two-dimensional torus was obtained in~\cite{DPRZ04},
and its second-order correction in~\cite{BK14}
(for the Brownian motion) and in~\cite{A21} (for the simple random walk),
see also~\cite{BRZ20} for the tightness of the cover time of
the two-dimensional sphere, and the recent paper~\cite{DRZ26},
where the convergence in distribution of the (square root of the,
suitably centred) Brownian cover time of the two-dimensional torus
to a randomly shifted Gumbel law is proved; the late points were studied
in~\cite{DPRZ06}, the size of the largest covered disk
in~\cite{DPR07}, and the extrema of local times in~\cite{A15}.
In these works, the analysis is based on the counts of excursions
between concentric circles on different scales.
Much less is known about the corresponding deviation probabilities:
besides the aforementioned paper~\cite{CGPV13}, let us mention
the exponential concentration bound for the cover times of general
graphs obtained in~\cite{Z18} (extending the results of~\cite{D14});
for the two-dimensional torus~$\Z^2_n$, it applies to the deviations
of the cover time of order at least $n^2\ln^{3/2}n$
(i.e., at least $\ln^{3/2}n$ in terms of the excursion counts).
Independent excursions naturally appear in the context of
two-dimensional random interlacements~\cite{CPV16,CP17}, and they
can be compared to the excursions of the random walk
by means of soft local times~\cite{SLT}; this is the reason why
we treat them first.

Let us state some conventions that are used throughout the paper.
We denote by $c,c',C,C',\ldots$ positive constants whose values
may change from line to line. The numbered constants
(such as $c_1$, $C_5$, etc.) keep their values within the statement
where they are introduced and in its proof; those of
Section~\ref{s_exc_SLT} are fixed for the rest of the paper.
Unless explicitly stated otherwise, all these constants,
as well as those implied in the $O(\cdot)$-notation,
do not depend on~$n$, on the excursion counts, and on
the collection~$\EE$: they may only depend on
the parameters $\kappa$, $M$, $\eps_1,\eps_2$ (and, in Section~\ref{s_appl},
on the parameters of the application at hand); constants that
do not depend on anything are called universal.
Likewise, ``for all large enough~$n$'' means ``for all $n\geq n_0$'',
where~$n_0$ only depends on the above parameters; in particular,
in the proofs of Theorems~\ref{t_exc_noncovering}
and~\ref{t_exc_covering}, the restrictions on~$n$ only use
the inequalities
$\ln^\kappa n\leq k_n<k'_n\leq \ln^M n$
(respectively,
$\ln^\kappa n\leq \underline k_n<\underline k'_n\leq \ln^M n$)
provided by Condition~L at~$n$.
Finally, whenever a number of excursions (or of disks) is not
an integer, its integer part is meant, unless indicated otherwise.

The rest of the paper is organized in the following way.
In Section~\ref{s_appl} we apply Theorems~\ref{t_exc_noncovering}
and~\ref{t_exc_covering} to the covering event, to the maximal
covered disk, and to the extrema of local times.
Section~\ref{s_prelims} contains the tools: the excursion process
and its stationary measure, the control of excursion counts,
and the soft local times. Theorems~\ref{t_exc_noncovering}
and~\ref{t_exc_covering} are proved in Section~\ref{s_proofs_main}.
Finally, in Section~\ref{s_brownian} we explain that these results
hold, \emph{mutatis mutandis}, for independent Brownian excursions
(with the occupation measure in place of the field of local times).
The Appendix contains a sketch of the proof of
Theorem~\ref{t_max_disk_typical} (on the typical size of
the maximal covered disk).

\section{Applications}
\label{s_appl}
In this section we discuss applications of our
results to some ``usual'' collections of global events.
In each case, the estimates on the deviation probabilities follow
from Theorems~\ref{t_exc_noncovering} and~\ref{t_exc_covering}
once the typical behaviour of the corresponding thresholds is known.
The latter is taken from the literature, where it was obtained
for the random walk on the torus; the adaptations of
these results to the setting of independent excursions are only sketched.

\subsection{The covering event}
\label{s_covering}
Our first example is $E^A = \{A \text{ is covered}\}$. As we noted before, this
is a collection of increasing events.
Then, it is known that both~$k_n$
and~$k'_n$ are equal to
$2\ln^2 n - (1+o(1))\ln n\ln\ln n$
(the main result of~\cite{A21} is about random
walk on a torus; to obtain the corresponding fact
about independent excursions, one compares them with the
excursions of that walk by means of
Lemma~2.11 of~\cite{CP17} and Corollary~\ref{c_SLT_domination},
and, for the lower threshold, one uses
Steps~1--3 of the proof of Proposition~2 of~\cite{CP24}, with
Corollary~\ref{c_SLT_domination} and Lemma~\ref{l_exc_count_one_pair}
in place of Lemmas~2.9--2.10 of~\cite{CP17}, to show that with high
probability there is an uncovered site in~$\B(n)$, and not just
somewhere on the torus).
Note that, with high probability, these comparisons change the excursion
counts by at most $O\big(\ln n\sqrt{\ln\ln n}\big)$; since this
is $o(\ln n\ln\ln n)$, the second-order term is not affected.
Abbreviate $\hk_n^{H}=2\ln^2 n - \ln n\ln\ln n
+ H\ln n$.

First, we discuss the deviation probabilities
in the noncovering regime (i.e., when, typically,
 $\B(n)$ should not be totally covered).
\begin{cor}
\label{c_CT_noncov}
 Assume that $H<0$ and that $\eps \ln\ln n \leq |H|
 \leq \gamma \ln n$, where $\eps>0$ and $\gamma\in(0,2)$.
 Then, there exist positive constants~$c_1,c_2$
 (depending only on~$\eps$ and~$\gamma$) such that,
 for all large enough~$n$,
\begin{equation}
\label{eq_CT_noncov}
 e^{-c_1 H^2} \leq \IP^{(\hk_n^{H})}\big[E^{\B(n)}\big]
 \leq e^{-c_2 H^2}.
\end{equation}
\end{cor}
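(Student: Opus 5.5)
The plan is to apply Theorem~\ref{t_exc_noncovering} to the covering collection $E^A=\{A\text{ is covered}\}$ with $g_n := k_n - \hk_n^H$ (upper bound) and $g_n := k'_n-\hk_n^H$ (lower bound). Condition~L at~$n$ holds with, say, $\kappa=1$ and $M=3$, because $k_n,k'_n = 2\ln^2 n-(1+o(1))\ln n\ln\ln n$.

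\emph{Size of $g_n$.} Write $k_n = 2\ln^2 n-\ln n\ln\ln n+o(\ln n\ln\ln n)$, and likewise for~$k'_n$. Then
\[
g_n = |H|\ln n + o(\ln n\ln\ln n).
\]
Since $|H|\geq\eps\ln\ln n$, the error term is $o(|H|\ln n)$, so $g_n=(1+o(1))|H|\ln n$. In particular $g_n\geq 0$ for large~$n$. Since $|H|\le\gamma\ln n$ with $\gamma<2$, we get $g_n\le(\gamma/2+o(1))k_n\le(1-\eps_1)k_n$ with $\eps_1:=(2-\gamma)/4$, and the same holds with $k'_n$.

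\emph{Lower bound.} Apply \eqref{eq_exc_noncovering_lower}; it needs no regularity assumption. Since $g_n^2/k'_n\asymp H^2\ln^2 n/\ln^2 n=H^2$, we obtain $\IP^{(\hk_n^H)}[E^{\B(n)}]\ge c_0e^{-cH^2}$. The constant $c_0$ is absorbed because $H^2\ge\eps^2(\ln\ln n)^2\to\infty$.

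\emph{Upper bound.} It remains to verify \eqref{eq_regularity}, namely $k_{n/h}\ge k_n-\frac12 g_n$ for $1\le h\le(\alpha g_n/\sqrt{k_n})^{1+\eps_2}$. Take $\eps_2=1$. Since $g_n/\sqrt{k_n}\asymp|H|\le 2\ln n$, the admissible range is $h\le C\ln^2 n$. For such $h$ we have $\ln(n/h)=\ln n-O(\ln\ln n)$, and the asymptotics give
\[
k_n-k_{n/h} = 4\ln n\,\ln h + O(\ln^2 h) + o(\ln n\ln\ln n) \le C'\ln n\ln\ln n .
\]
We need this to be at most $\frac12 g_n\approx\frac12|H|\ln n$. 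That holds when $|H|\ge C''\ln\ln n$, but not automatically when $\eps$ is small, so this is the main obstacle. I would handle it by not taking the full range of~$h$. Condition~\eqref{eq_regularity} only involves $h\le(\alpha g_n/\sqrt{k_n})^{1+\eps_2}$, and $\alpha g_n/\sqrt{k_n}\approx\alpha|H|/\sqrt2$. Hence $\ln h\le(1+\eps_2)\ln|H|+O(1)$, and $k_n-k_{n/h}\le 4(1+\eps_2)\ln n\ln|H|+o(\ln n\ln\ln n)$.

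This comparison still fails when $|H|\asymp\ln\ln n$, since $\ln|H|\approx\ln\ln\ln n$ is much smaller than $|H|$. So we actually need $4(1+\eps_2)\ln|H|\le\frac12|H|$, up to the $o(\ln\ln n)$ error, which is dominated by $\eps\ln\ln n$. This holds for $|H|$ large, and $|H|\ge\eps\ln\ln n$ is large for large~$n$, so~\eqref{eq_regularity} holds. Then \eqref{eq_exc_noncovering_upper} gives $\IP\le\exp(-c_2g_n^2/k_n)\le e^{-c H^2}$. The remaining care is that the $o(\ln n\ln\ln n)$ term in the thresholds at scale $n/h$ is uniform in $h\le\ln^{O(1)}n$; this follows because $\ln\ln(n/h)=\ln\ln n+o(1)$.
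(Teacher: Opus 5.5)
Your proposal is correct and follows the same route as the paper: apply Theorem~\ref{t_exc_noncovering} with $g_n=k'_n-\hk_n^H$ for the lower bound and $g_n=k_n-\hk_n^H$ for the upper bound, and use $g_n=(1+o(1))|H|\ln n$ together with the threshold asymptotics to verify~\eqref{eq_regularity}. The only difference is cosmetic: the paper's remark after the theorem checks~\eqref{eq_regularity} on the much larger range $h\le\exp(cg_n/\ln n)$ (valid once $g_n\ge c'\sqrt{k_n}\ln k_n$, i.e.\ $|H|\gtrsim\ln\ln n$), whereas you check it only on the range $h\le(\alpha g_n/\sqrt{k_n})^{1+\eps_2}$ that is actually required, and that works equally well.
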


\begin{proof}
This is an immediate consequence of Theorem~\ref{t_exc_noncovering}
(see also the discussion after Theorem~\ref{t_exc_noncovering} about the condition~\eqref{eq_regularity}).
\end{proof}

Note that, in the near-critical regime
$H=O(\ln\ln n)$, we obtain estimates of the
form $\exp\big(-c(\ln\ln n)^2\big)$.
In particular, this yields
Proposition~2 of~\cite{CP24}
(with~$e$ as the ratio of the radii of the two disks,
which is the case used there; the difference
between~$\hm_{\B(n)}$ and~$\hm_{\B(n)}^{\B(en)}$ is immaterial
by~\eqref{hm_vs_hmrel} below);
the one-scale argument used there gives the estimate
$\exp\big(-c\frac{(\ln\ln n)^2}{\ln\ln\ln n}\big)$
for that probability, cf.~\eqref{est_psi_n} below,
which suffices for the purposes of~\cite{CP24}.

Now, we consider the covering regime,
i.e., $\hk_n^{H}$ excursions with $H>0$.
\begin{cor}
\label{c_CT_cov}
 Assume that $H>0$ and that $\eps \ln\ln n \leq H
 \leq \gamma \ln n$, where $\eps>0$ and $\gamma\in(0,2)$.
 Then, there exist positive constants~$c_1,c_2$
 (depending only on~$\eps$ and~$\gamma$) such that,
 for all large enough~$n$,
\begin{equation}
\label{eq_CT_cov}
 e^{-c_1 H} \leq \IP^{(\hk_n^{H})}\big[(E^{\B(n)})^\complement\big]
 \leq e^{-c_2 H}.
\end{equation}
\end{cor}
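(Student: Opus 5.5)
Both bounds follow from Theorem~\ref{t_exc_covering} with $g_n:=\hk_n^{H}-k_n$ in part~(i) and $g_n:=\hk_n^{H}-k'_n$ in part~(ii). Since $P^{(j)}[(E^{\B(n)})^\complement]$ is nonincreasing in~$j$, I may round $g_n$ in whichever direction helps. The only input about the thresholds is the asymptotics $k_n,k'_n=2\ln^2 n-\ln n\ln\ln n+o(\ln n\ln\ln n)$ quoted above.

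The first step is to control $g_n$. Fix $\delta>0$ small compared with~$\eps$. For large~$n$, the $o(\ln n\ln\ln n)$ error is at most $\delta\ln n\ln\ln n\le(\delta/\eps)H\ln n$, because $H\ge\eps\ln\ln n$. Hence
\[
(1-\delta/\eps)H\ln n\;\le\; \hk_n^{H}-k'_n\;\le\;\hk_n^{H}-k_n\;\le\;(1+\delta/\eps)H\ln n .
\]
In particular $g_n\asymp H\ln n$. Moreover $g_n\le(1+\delta/\eps)\gamma\ln^2 n\le(1-\eps_1)k'_n$ for some $\eps_1>0$ depending on $\gamma<2$, so the hypotheses on $g_n$ in the theorem are met.

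For the lower bound I apply~\eqref{eq_exc_covering_lower} with $r=1$. Here $(E^{\B(n)})^\complement$ is always witnessed on a singleton, namely an uncovered site, so $\{\vr_n\le 1\}=(E^{\B(n)})^\complement$. By the definition~\eqref{def_kn} of~$k_n$, this event has $\IP^{(k_n)}$-probability at least $6/7$. The second factor is $\big(1-\frac{1}{\ln n+C}\big)^{g_n}\ge\exp\big(-2g_n/\ln n\big)\ge e^{-c_1H}$, which gives the left inequality in~\eqref{eq_CT_cov}.

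For the upper bound I use~\eqref{eq_exc_covering} with $m=\lfloor e^{c'H}\rfloor$, where $c'>0$ is small. Since $\sqrt{k'_n}\sim\sqrt2\ln n$ and $g_n\ge\frac12H\ln n$, the first condition in~\eqref{df_m_n_covering} holds once $c'$ is small compared with~$\tilde c$. The main point is the regularity condition $k'_n+\frac12g_n\ge k'_{\alpha^{-1}mn}$. Put $N=\alpha^{-1}mn$, so that $\ln N=\ln n+c'H+O(1)$. Using the asymptotics at scale~$N$,
\[
k'_N-k'_n\le 4c'H\ln n+O(\ln n)+2c'^2H^2+O(H)+o(\ln n\ln\ln n).
\]
Each term is at most $g_n/10$ for small $c'$, small $\delta$ and large~$n$:
\begin{itemize}
\item[] $H^2\le\gamma H\ln n$, so $2c'^2H^2\le 2c'^2\gamma H\ln n$;
\item[] $O(\ln n)$ and $o(\ln n\ln\ln n)$ are absorbed because $g_n\ge\frac12\eps\ln n\ln\ln n$;
\item[] the change in the $\ln N\ln\ln N$ term is $O(H+\ln n/\ln n)$, which is negligible.
\end{itemize}
Thus the probability is at most $m^{-2}\le e^{-c_2H}$.

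The step I expect to be the main obstacle is this comparison of thresholds across scales. It requires the threshold asymptotics to hold uniformly, with the same $o(\cdot)$, at the larger radius~$N$. It also requires the error terms to be dominated by~$g_n$, and this is exactly where the assumption $H\ge\eps\ln\ln n$ is used.
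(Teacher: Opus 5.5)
Your proposal is correct and follows the paper's proof. The lower bound comes from \eqref{eq_exc_covering_lower} with $r=1$, since the complement is always witnessed on a singleton, and the upper bound comes from \eqref{eq_exc_covering} with $m$ of order $e^{c'H}$, which is exactly the paper's assertion $m_n\geq e^{cH}$ that you verify in detail. One small slip: the change in the $\ln N\ln\ln N$ term is of order $H\ln\ln n$, not $O(H)$, but it enters $k'_N-k'_n$ with a negative sign and is $o(g_n)$ anyway, so your conclusion stands.
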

\begin{proof}
First, note that we always have $\vr_n=1$
on the event $(E^{\B(n)})^\complement$
(if the event $E^{\B(n)}$ does not occur,
then there is an uncovered site~$z\in\B(n)$, so
$(E^{\{z\}})^\complement$ occurs).
 The lower bound is then a straightforward
 consequence of~\eqref{eq_exc_covering_lower}.

As for the upper bound, it is not difficult
to check that $m_n\geq e^{c H}$ with small enough~$c>0$, so we obtain the result from~\eqref{eq_exc_covering}.
\end{proof}
We note that the estimates in~\eqref{eq_CT_cov}
are not the sharpest ones that can be obtained;
at least in some regimes, one can have matching
constants there. We do not elaborate.

\subsection{Maximal covered disk}
\label{s_maxdisk}

The maximal size of a covered disk in~$A$
is defined in the following way:
\[
 \hR^A = \sup\big\{
  r>0: \text{ there exists $x$ such that $\B(x,r)\subset A$ and
  $\ell\geq 1$ on $\B(x,r)$}\big\}
\]
(with the convention $\sup\emptyset:=0$).

Our second example is:
$E^A = \big\{\hR^A\geq r\big\}^\complement=\big\{\hR^A< r\big\}$.
This is a
\emph{decreasing} global event.

As for the typical behaviour of the
maximal size of the covered disk,
first, recall from Section~\ref{s_covering} that,
with $2\ln^2 n$ excursions,
the whole~$\B(n)$ is likely to be covered, so
$\hR^{\B(n)}$ is likely to be
of order~$n$.
On the other hand, already with one excursion,
the size of the largest covered disk is roughly~$n^{1/2}$
due to the results of~\cite{DPR07}.
This \emph{suggests} that, with $2\theta \ln^2 n$ excursions where $\theta \in (0,1)$,
the typical value
of~$\hR^{\B(n)}$ should be
around~$n^{\chi(\theta)+o(1)}$, where $\chi(\cdot)$
is a strictly increasing function on~$[0,1]$
with $\chi(0)=\frac{1}{2}$ and $\chi(1)=1$.
It seems that the question of what is the actual value
of $\chi(\theta)$ was somehow overlooked
in the literature, even though it is possible
to show that $\chi(\theta)=\frac{1+\sqrt{\theta}}{2}$ for $\theta\in[0,1]$
using the multiscale excursion-count approach
of~\cite{A15,A21,BK14,DPRZ04}
(although this is not explicitly mentioned there).
More specifically, we have
\begin{theo}
\label{t_max_disk_typical}
For any $\theta\in(0,1)$ and $\eps>0$ we have
(with $\chi(\theta)=\frac{1+\sqrt{\theta}}{2}$)
\begin{equation}
\label{eq_max_disk_typical}
\IP^{(2\theta\ln^2 n)}\big[n^{\chi(\theta)-\eps}
\leq\hR^{\B(n)}
\leq n^{\chi(\theta)+\eps}\big]
 \to 1 \quad \text{ as }\quad n\to\infty.
\end{equation}
\end{theo}
Since the proof of this fact uses the tools developed
in Sections~\ref{s_prelims} and~\ref{s_proofs_main},
we postpone it to the Appendix.

Fix $a\in(1/2,1)$, and write\footnote{Note that,
if $a=\frac{1+\sqrt{\theta}}{2}$, then
$\theta=(2a-1)^2$.}
$K_n(a):=2(2a-1)^2\ln^2 n$.
For the decreasing global event
\[
E_n^{(a)}:=\{\hR^{\B(n)}<n^a\},
\]
let $\underline k_n(a)$ and $\underline k_n'(a)$ denote the lower and upper thresholds,
respectively (recall~\eqref{def_kn_dec}--\eqref{def_hkn_dec}). Theorem~\ref{t_max_disk_typical} implies that
\[
\underline k_n(a)=K_n(a)(1+o(1)),
\qquad
\underline k_n'(a)=K_n(a)(1+o(1)).
\]
Thus, for every fixed $\delta\in(0,1)$, the quantities
$
g_n^-:=\underline k_n'(a)-(1-\delta)K_n(a)
$
and
$
g_n^+:=(1+\delta)K_n(a)-\underline k_n(a)
$
are positive for all sufficiently large $n$, and
$g_n^-\sim\delta K_n(a)$, $g_n^+\sim\delta K_n(a)$.
Moreover, since the thresholds are nondecreasing in~$a$,
these asymptotics are uniform in~$a$ on compact subsets of~$(1/2,1)$;
in particular, when the event $\{\hR^A<n^a\}$ (with~$n$ and~$a$ kept fixed)
is considered on the disk of radius~$n^{1+x}$, the corresponding
thresholds are $2(2a-1-x)^2\ln^2 n\,(1+o(1))$,
uniformly in~$x$ in a small enough neighbourhood of~$0$.

First, we consider the case when there is
more than the critical number of excursions.
\begin{cor}
\label{c_maxdisk_more}
 Fix $\delta_1\in(0,1)$.
 For any $a\in(1/2,1)$ there exist positive constants~$c_a,C_a$
 (depending only on~$a$ and~$\delta_1$) such that
\begin{equation}
\label{eq_maxdisk_more}
 \exp\{-C_a \delta^2 K_n(a)\}
\le
\IP^{((1+\delta)K_n(a))}
\big[
\hR^{\B(n)}<n^a
\big]
\le
\exp\{-c_a \delta^2 K_n(a)\}
\end{equation}
for all sufficiently large~$n$ (depending on~$\delta$),
where the upper bound holds for every fixed $\delta\in(0,\delta_1]$,
and the lower bound holds for every fixed $\delta\in(0,\delta_0)$,
with some $\delta_0>0$ not depending on~$a$.
\end{cor}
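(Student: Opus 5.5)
The plan is to apply the decreasing-event part of Theorem~\ref{t_exc_noncovering} to the collection $E^A=\{\hR^A<n^a\}$, where $n$ and $a$ are kept fixed and only $A$ varies. This collection is decreasing, global and translation invariant. By Theorem~\ref{t_max_disk_typical}, $\underline k_n(a),\underline k'_n(a)=K_n(a)(1+o(1))$, and $K_n(a)$ is of order $\ln^2 n$. Hence Condition~L holds at~$n$ with, say, $\kappa=1$ and $M=3$, uniformly for~$a$ in compacts.

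For the \textbf{upper bound}, write $(1+\delta)K_n(a)=\underline k'_n+g_n$ with $g_n:=(1+\delta)K_n(a)-\underline k'_n(a)\sim\delta K_n(a)$. Since $\delta\le\delta_1<1$, we have $g_n\le(1-\eps_1)\underline k'_n$ for a suitable $\eps_1=\eps_1(\delta_1)$ and large~$n$. The estimate~\eqref{eq_exc_noncovering_upper} then gives the bound $\exp(-c_2g_n^2/\underline k'_n)=\exp(-c\,\delta^2K_n(a))$.

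The main work is checking the regularity condition~\eqref{eq_regularity_dec}. We need $\underline k'_n+\frac12 g_n\ge\underline k'_{n/h}$ for all $1\le h\le(\alpha g_n/\sqrt{\underline k'_n})^{1+\eps_2}$. Here $g_n/\sqrt{\underline k'_n}\asymp\delta\ln n$, so $h$ ranges up to a power of $\delta\ln n$, i.e. $\ln h=O(\ln\ln n)$. On the disk of radius $n/h=n^{1-x}$ with $x=\ln h/\ln n=o(1)$, the uniform asymptotics stated before the corollary give
\[
\underline k'_{n/h}=2(2a-1+x)^2\ln^2 n\,(1+o(1))=K_n(a)(1+o(1)).
\]
This is at most $(1+\delta/2)K_n(a)\le \underline k'_n+\frac12 g_n$ for large~$n$. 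For the last inequality, note that $\underline k'_n+\frac12g_n=(1+\delta/2)K_n(a)+\frac12(\underline k'_n-K_n(a))$, and the error term is $o(K_n)$, which is absorbed by shrinking $\delta/2$ slightly, e.g. comparing with $(1+\delta/3)K_n$. The point to be careful about is that the $o(1)$ must be uniform in $h$ over this range, and that is exactly what the stated uniformity in~$x$ near~$0$ provides.

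For the \textbf{lower bound}, set $g_n:=(1+\delta)K_n(a)-\underline k_n(a)\sim\delta K_n(a)$. Choosing $\delta_0<\eps_3$ (with $\eps_3$ from Theorem~\ref{t_exc_noncovering}, which depends only on $\kappa,M,\eps_1$ and hence not on~$a$) ensures $g_n\le\eps_3\underline k_n$ for large~$n$. The decreasing-case version of~\eqref{eq_exc_noncovering_lower} then yields
\[
\IP^{(\underline k_n+g_n)}[E^{\B(n)}]\ge c_0\exp(-c_1 g_n^2/\underline k_n)\ge \exp(-C_a\delta^2K_n(a)),
\]
where the constant $c_0$ is absorbed since $\delta^2K_n\to\infty$. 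No regularity condition is needed for this direction.

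Finally, the dependence of the constants on~$a$ enters only through the choice of $\kappa$ and $M$ in Condition~L and through the comparison $g_n^2/\underline k_n\asymp\delta^2K_n(a)$.
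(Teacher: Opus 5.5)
Your proposal is correct and follows the paper's route: apply the decreasing-event part of Theorem~\ref{t_exc_noncovering} with $g_n=(1+\delta)K_n(a)-\underline k'_n(a)$ (taking $\eps_1=\frac{1-\delta_1}{2}$) for the upper bound, and with $g_n=(1+\delta)K_n(a)-\underline k_n(a)$ and $\delta_0$ tied to $\eps_3$ for the lower bound. Your explicit check of~\eqref{eq_regularity_dec}, via the uniform-in-$x$ asymptotics of the thresholds on $\B(n^{1-x})$ with $x=O(\ln\ln n/\ln n)$, is exactly the verification that the paper leaves to the reader.
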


\begin{proof}
We apply Theorem~\ref{t_exc_noncovering}
to the decreasing global event~$E_n^{(a)}$.
The lower bound in~\eqref{eq_maxdisk_more}
immediately follows from~\eqref{eq_exc_noncovering_lower}:
indeed, $\IP^{((1+\delta)K_n(a))}=\IP^{(\underline k_n(a)+g_n^+)}$
and $g_n^+\sim\delta\,\underline k_n(a)$, so, with
$\delta_0:=\eps_3$ (where~$\eps_3$ is from
Theorem~\ref{t_exc_noncovering}), we have
$g_n^+\leq\eps_3\underline k_n(a)$ for all large enough~$n$
when $\delta<\delta_0$.
As for the upper bound,
it is also straightforward: we use~\eqref{eq_exc_noncovering_upper}
with $\eps_1:=\frac{1-\delta_1}{2}$ (again, see the
discussion after Theorem~\ref{t_exc_noncovering}).
\end{proof}

In the case of fewer than the critical number of excursions, we obtain a polynomial decay
of deviation probabilities:
\begin{cor}
\label{c_maxdisk_less}
 Fix $\delta_1\in(0,1)$.
 For any $a\in(1/2,1)$ there exist positive constants~$c_a,C_a$
 (depending only on~$a$ and~$\delta_1$) such that,
 for every fixed $\delta\in(0,\delta_1]$ and all sufficiently large~$n$
 (depending on~$\delta$),
\begin{equation}
\label{eq_maxdisk_less}
n^{-C_a \delta}
\le
\IP^{((1-\delta)K_n(a))}
\big[
\hR^{\B(n)}\geq n^a
\big]
\le
n^{-c_a \delta}.
\end{equation}
\end{cor}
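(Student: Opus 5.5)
We apply Theorem~\ref{t_exc_covering} to the decreasing global event $E_n^{(a)}=\{\hR^{\B(n)}<n^a\}$. Its complement is $\{\hR^{\B(n)}\geq n^a\}$, so the lower bound in~\eqref{eq_maxdisk_less} will come from~\eqref{eq_exc_covering_lower_decreasing} and the upper bound from part~(ii) with~\eqref{df_m_n_covering_dec}. Throughout we use $\underline k_n(a),\underline k'_n(a)=K_n(a)(1+o(1))$, which Theorem~\ref{t_max_disk_typical} provides. Condition~L then holds with, say, $\kappa=1$ and $M=3$.

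\emph{Lower bound.} Write $(1-\delta)K_n(a)=\underline k'_n(a)-g_n^-$ with $g_n^-\sim\delta K_n(a)\leq(1-\eps_1)\underline k'_n(a)$, taking $\eps_1=(1-\delta_1)/2$.

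1. Any witness of $(E^A)^\complement$ is a covered disk of radius $n^a$, hence a set of diameter about $2n^a$. Since $\IP^{(\underline k'_n)}[(E^{\B(n)})^\complement]\geq 6/7$, we get $\IP^{(\underline k'_n)}[\vr_n\leq r]\geq 6/7$ with $r=3n^a$.

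2. Then $\ln\frac{en}{r}\asymp(1-a)\ln n$. Fix $\gamma\geq\gamma_0$ large, depending on $a$. The subtracted term is
\[
c_1 n^{2(1-a)}\exp\bigl(-c_2\gamma K_n/((1-a)\ln n)\bigr)=n^{2(1-a)}\exp\bigl(-c\gamma\ln n\bigr),
\]
which is $o(1)$ once $\gamma$ is large.

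3. The last factor is
\[
\Bigl(1-\frac{c_3\gamma}{(1-a)\ln n}\Bigr)^{g_n^-}\geq\exp\Bigl(-C\frac{\delta K_n}{\ln n}\Bigr)=n^{-C_a\delta}.
\]
The constant $1/2$ coming from step~1 can be absorbed into $n^{-C_a\delta}$ for large $n$, since $\delta$ is fixed.

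\emph{Upper bound.} Write $(1-\delta)K_n(a)=\underline k_n(a)-g_n$ with $g_n\sim\delta K_n(a)$. By~\eqref{eq_exc_covering} it suffices to show $m_n\geq n^{c\delta}$.

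4. The first condition in~\eqref{df_m_n_covering_dec} allows $m\leq\exp(\tilde c\,g_n/\sqrt{\underline k_n})=\exp(c\delta\sqrt{K_n})$. This is $n^{c'\delta(2a-1)}$, which is polynomial in $n$.

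5. The second condition requires $\underline k_n-\frac12 g_n\leq\underline k_{\alpha^{-1}mn}$. Here $\underline k_{\alpha^{-1}mn}$ is the threshold of the same event $\{\hR^A<n^a\}$, with $n$ and $a$ fixed, on the disk of radius $n^{1+x}$ where $x\approx\ln(\alpha^{-1}m)/\ln n$.

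6. By the uniform asymptotics stated before Corollary~\ref{c_maxdisk_more}, this threshold equals $2(2a-1-x)^2\ln^2 n\,(1+o(1))$. It is therefore at least
\[
K_n(a)\bigl(1-Cx/(2a-1)\bigr)-o(K_n).
\]
The condition thus holds provided $Cx/(2a-1)<\delta/2-o(1)$, that is, for $m\leq n^{c_a\delta}$ with $c_a$ small.

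7. Combining steps 4 and 6 gives $m_n\geq n^{c_a\delta}$, and hence the bound $n^{-2c_a\delta}$.

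\emph{Main obstacle.} The delicate step is step~6. The $o(1)$ error in the thresholds must be uniform in the scale $n^{1+x}$ and smaller than $\delta$. This is exactly why $n$ must be large depending on $\delta$: a fixed $\delta$ beats the $o(1)$ term. It also relies on the uniformity claimed after Theorem~\ref{t_max_disk_typical}, which is needed only for $x$ in a small neighbourhood $[0,c_a\delta]$.

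A secondary issue is checking, in step~1, that the witness diameter is of order $n^a$ rather than $n$. This holds because a covered disk is itself a witness set.
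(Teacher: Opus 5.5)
Your proposal is correct and follows essentially the same route as the paper. The lower bound comes from~\eqref{eq_exc_covering_lower_decreasing} with $r$ of order $n^a$ and a large fixed $\gamma$, so that the subtracted term is $n^{2(1-a)-c\gamma}=o(1)$ and the last factor is $n^{-C\delta}$. The upper bound comes from Theorem~\ref{t_exc_covering}~(ii) with $m_n$ polynomial in~$n$, obtained from the threshold asymptotics $2(2a-1-x)^2\ln^2 n\,(1+o(1))$ on the disk of radius~$n^{1+x}$.

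The only wording to fix is in step~1: you need \emph{some} witness of diameter at most~$2n^a$, not that every witness is such a disk. That witness is a covered disk of radius at most~$n^a$, taken with the care the paper takes because the supremum defining~$\hR^{\B(n)}$ need not be attained. Taking the minimum of the two constraints on~$m$, rather than shrinking~$\tilde c$ as the paper does, makes no difference.
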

\begin{proof}
For the upper bound, recall that, for the disk of
radius~$n^{1+x}$ (and the same event), the thresholds
are asymptotic to $2(2a-1-x)^2\ln^2 n$; an elementary
calculation then implies that
(if~$\tilde c$ is small enough)
$m_n = n^{\sqrt{2}\tilde c(2a-1)\delta(1+o(1))}$,
which leads to
\[
  \IP^{((1-\delta)K_n(a))}
\big[\hR^{\B(n)}\geq n^a\big]
\leq  n^{-2\sqrt{2}\tilde c(2a-1)\delta(1+o(1))}.
\]

We now prove the lower bound in~\eqref{eq_maxdisk_less}.
Recall that
$g_n^-=\underline k_n'(a)-(1-\delta)K_n(a)=\delta K_n(a)(1+o(1))$.
On the event
$\{\hR^{\B(n)}\geq n^a\}$, there exists a covered disk
$D=\B(x,r')\subset\B(n)$ with $r'\leq n^a$ and $\hR^{D}\geq n^a$
(the supremum in the definition of~$\hR^{\B(n)}$ need not be attained,
so one may have to take $r'<n^a$), and hence a witness set
of diameter at most~$2n^a$.
We therefore apply the decreasing-event lower bound in
Theorem~\ref{t_exc_covering} with $g_n=g_n^-$ and $r=2n^a$
(and with $\eps_1:=\frac{1-\delta_1}{2}$, so that
$g_n^-\leq(1-\eps_1)\underline k'_n(a)$ for all large enough~$n$).

Choose $\gamma>0$ sufficiently large but fixed.
It is then straightforward to obtain
that (with large enough~$\gamma$)
the first factor in the right-hand side
of~\eqref{eq_exc_covering_lower_decreasing}
is at least~$1/2$; the second factor is clearly
at least $n^{-C\delta}$ for some~$C$
(depending on~$a$ and~$\delta_1$).
This proves the lower bound in~\eqref{eq_maxdisk_less}.
\end{proof}

\subsection{Extrema of local times}
\label{s_LT_extrema}
As one more illustration, consider the minimal and the maximal
local times on~$A$:
\[
\tL^A=\min_{y\in A}\ell(y), \qquad \hL^A=\max_{y\in A}\ell(y).
\]
For any $u>0$, $\big(\{\hL^A\leq u\}, A\subset\Z^2\text{ finite}\big)$
is a global collection of decreasing events, and
$\big(\{\tL^A\geq u\}, A\subset\Z^2\text{ finite}\big)$
is a global collection of increasing events; in both cases,
the complementary event is witnessed on a singleton, so
$\vr_n=1$ whenever it occurs. Denote
\[
\lambda^+(\theta)=\frac{4}{\pi}(1+\sqrt{\theta})^2,
\qquad
\lambda^-(\theta)=\frac{4}{\pi}(\sqrt{\theta}-1)^2.
\]
It can be obtained from the results of~\cite{A15} and their proofs
(the arguments there can be restricted to the sites of a fixed disk,
and the passage from the continuous-time walk on the torus to
independent excursions is as in Section~\ref{s_covering})
that, for any fixed $\eps>0$,
as $n\to\infty$,
\begin{equation}
\label{eq_extrema_typical}
\begin{aligned}
\IP^{(2\theta\ln^2 n)}\Big[\Big|\frac{\hL^{\B(n)}}{\ln^2 n}-\lambda^+(\theta)\Big|
\leq \eps\Big] &\to 1
\quad \text{for all }\theta>0,\\
\IP^{(2\theta\ln^2 n)}\Big[\Big|\frac{\tL^{\B(n)}}{\ln^2 n}-\lambda^-(\theta)\Big|
\leq \eps\Big] &\to 1
\quad \text{for all }\theta>1
\end{aligned}
\end{equation}
(when $\theta<1$, the disk~$\B(n)$ is typically not covered,
so $\tL^{\B(n)}=0$).
Hence, with $u=\lambda^+(\theta)\ln^2 n$ (respectively, $u=\lambda^-(\theta)\ln^2 n$),
all the corresponding thresholds are asymptotic to~$2\theta\ln^2 n$
(uniformly in~$\theta$ on compact sets, as in Section~\ref{s_maxdisk}),
and we obtain the following
\begin{cor}
\label{c_extrema}
Let~$\delta_0$ be as in Corollary~\ref{c_maxdisk_more},
and fix $\delta\in(0,\delta_0)$.
Then, there exist positive constants $c,C$ (depending only on~$\theta$)
such that, for all large enough~$n$ (depending on~$\delta$),
\begin{align}
e^{-C\delta^2\ln^2 n}
&\leq
\IP^{(2(1+\delta)\theta\ln^2 n)}
\big[\hL^{\B(n)}\leq \lambda^+(\theta)\ln^2 n\big]
\leq
e^{-c\delta^2\ln^2 n},
\label{eq_max_extrema_deviation}
\\
n^{-C\delta}
&\leq
\IP^{(2(1-\delta)\theta\ln^2 n)}
\big[\hL^{\B(n)}>\lambda^+(\theta)\ln^2 n\big]
\leq
 n^{-c\delta}
\label{eq_max_extrema_covering}
\end{align}
for any fixed $\theta>0$, and
\begin{align}
e^{-C\delta^2\ln^2 n}
&\leq
\IP^{(2(1-\delta)\theta\ln^2 n)}
\big[\tL^{\B(n)}\geq \lambda^-(\theta)\ln^2 n\big]
\leq
e^{-c\delta^2\ln^2 n},
\label{eq_min_extrema_deviation}
\\
n^{-C\delta}
&\leq
\IP^{(2(1+\delta)\theta\ln^2 n)}
\big[\tL^{\B(n)}<\lambda^-(\theta)\ln^2 n\big]
\leq
 n^{-c\delta}
\label{eq_min_extrema_covering}
\end{align}
for any fixed $\theta>1$.
\end{cor}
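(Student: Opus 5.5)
The plan is to derive Corollary~\ref{c_extrema} from Theorems~\ref{t_exc_noncovering} and~\ref{t_exc_covering}, exactly as Corollaries~\ref{c_maxdisk_more} and~\ref{c_maxdisk_less} were derived. The only input needed is the typical behaviour~\eqref{eq_extrema_typical}, read at the level of thresholds.

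\emph{Setting up the thresholds.} Fix $\theta$ and let $u=\lambda^\pm(\theta)\ln^2 n$, kept fixed while the radius varies.
\begin{itemize}
\item For $\{\hL^A\le u\}$ (decreasing), \eqref{eq_extrema_typical} together with the monotonicity of $\lambda^+$ in~$\theta$ gives $\underline k_n,\underline k'_n=2\theta\ln^2 n(1+o(1))$.
\item For $\{\tL^A\ge u\}$ (increasing), the same argument with $\lambda^-$ increasing on $(1,\infty)$ gives $k_n,k'_n=2\theta\ln^2 n(1+o(1))$.
\item Condition~L then holds with, e.g., $\kappa=1$ and $M=3$.
\item On the disk of radius $n^{1+x}$ (same~$u$), $\ln^2 n^{1+x}=(1+x)^2\ln^2 n$. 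The thresholds there are therefore $2\theta_x(1+x)^2\ln^2 n(1+o(1))$, where $\theta_x$ solves $\lambda^\pm(\theta_x)(1+x)^2=\lambda^\pm(\theta)$. This is a smooth, strictly monotone function of~$x$ near~$0$, decreasing in the radius.
\end{itemize}
I need this uniformly in $x$ on a small neighbourhood of~$0$. It follows, as in Section~\ref{s_maxdisk}, from monotonicity of the thresholds in~$u$ and continuity of $\lambda^\pm$.

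\emph{Deviation bounds \eqref{eq_max_extrema_deviation} and \eqref{eq_min_extrema_deviation}.}
\begin{itemize}
\item For the maximum (decreasing event) with $2(1+\delta)\theta\ln^2 n$ excursions, set $g_n=(1+\delta)2\theta\ln^2 n-\underline k_n\sim\delta\underline k_n$.
 \begin{itemize}
 \item The lower bound comes from~\eqref{eq_exc_noncovering_lower}, since $\delta<\delta_0=\eps_3$.
 \item The upper bound comes from~\eqref{eq_exc_noncovering_upper}, applied with $g_n=(1+\delta)2\theta\ln^2 n-\underline k'_n$.
 \end{itemize}
\item For the minimum (increasing event) with $2(1-\delta)\theta\ln^2n$ excursions, use $g_n=k'_n-\ldots$ for the lower bound and $g_n=k_n-\ldots$ for the upper bound.
\item The regularity conditions \eqref{eq_regularity} and~\eqref{eq_regularity_dec} only involve $h\le(\alpha g_n/\sqrt{k_n})^{1+\eps_2}=O(\ln^{1+\eps_2}n)$. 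Hence $\ln h=O(\ln\ln n)$, and the thresholds at scale $n/h$ differ from those at~$n$ by $O(\ln n\ln\ln n)+o(\ln^2n)$.
\end{itemize}
That last estimate is the delicate point. The $o(\ln^2 n)$ error in~\eqref{eq_extrema_typical} must be beaten by $\frac12 g_n\asymp\delta\ln^2 n$, which holds for fixed $\delta$ and large~$n$. Note also that $\delta^2\ln^2 n$ is $g_n^2/k_n$ up to $\theta$-dependent constants.

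\emph{Polynomial bounds \eqref{eq_max_extrema_covering} and \eqref{eq_min_extrema_covering}.} Here $\vr_n=1$ on the complementary event.
\begin{itemize}
\item For the minimum (increasing), the lower bound follows from~\eqref{eq_exc_covering_lower} with $r=1$. The probability $\IP^{(k_n)}[\vr_n\le1]\ge 6/7$ by the definition of~$k_n$. The factor $(1-1/(\ln n+C))^{g_n}$ is at least $n^{-C\delta}$, since $g_n\asymp\delta\ln^2 n$.
 \begin{itemize}
 \item The count $2(1+\delta)\theta\ln^2n$ exceeds $k_n$ by $g_n\sim\delta k_n$. I take $g_n=2(1+\delta)\theta\ln^2 n-k_n$, and the event is monotone in the count.
 \end{itemize}
\item For the maximum (decreasing), use~\eqref{eq_exc_covering_lower_decreasing} with $r=1$ and $\gamma$ a large constant. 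The subtracted term is $c_1n^2\exp(-c_2\gamma\underline k'_n/\ln(en))=n^{2-c\gamma\theta}\to0$. So the first factor is at least $1/2$, and the second is $\ge(1-c_3\gamma/\ln n)^{g_n}\ge n^{-C\delta}$.
\item For the upper bounds, use~\eqref{eq_exc_covering} with $m=n^{c\delta}$.
 \begin{itemize}
 \item The first condition in~\eqref{df_m_n_covering} holds because $\tilde c g_n/\sqrt{k'_n}\asymp\delta\ln n$.
 \item For the second, $\alpha^{-1}mn=n^{1+c\delta+o(1)}$, and by the scale computation above the threshold there moves by $O(c\delta)\ln^2n$. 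Choosing $c$ small (depending on $\theta$) makes this at most $\frac12g_n$.
 \end{itemize}
 This gives probability at most $m^{-2}=n^{-2c\delta}$.
\end{itemize}

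I expect the main obstacle to be making the uniform-in-scale threshold asymptotics rigorous from~\eqref{eq_extrema_typical}. This is where the $n$-dependence of the collection $\EE$ matters, and the statement only gives pointwise convergence. I would handle it via monotonicity in $u$ and a sandwich with nearby fixed~$\theta$ values.
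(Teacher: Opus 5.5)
Your proposal is correct and follows the paper's proof. The Gaussian bounds \eqref{eq_max_extrema_deviation} and \eqref{eq_min_extrema_deviation} come from Theorem~\ref{t_exc_noncovering}, exactly as in Corollary~\ref{c_maxdisk_more}. The polynomial lower bounds come from Theorem~\ref{t_exc_covering}~(i) with $r=1$ (and $\gamma$ large depending on~$\theta$). The polynomial upper bounds come from Theorem~\ref{t_exc_covering}~(ii), using the scale-$n^{1+x}$ thresholds $2(\sqrt{\theta}-x)^2\ln^2 n$ and $2(\sqrt{\theta}+x)^2\ln^2 n$, made uniform via monotonicity in~$u$.

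The differences are cosmetic. You choose an admissible $m=n^{c\delta}$ directly rather than computing $m_n$. Also, for the minimal local time the threshold itself (as opposed to $\theta_x$) increases with the radius, but you only use the size of the shift, so this is harmless.
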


\begin{proof}
The estimates~\eqref{eq_max_extrema_deviation}
and~\eqref{eq_min_extrema_deviation} are straightforward consequences
of Theorem~\ref{t_exc_noncovering}, in the same way as in the proof
of Corollary~\ref{c_maxdisk_more}.
The lower bounds in~\eqref{eq_max_extrema_covering}
and~\eqref{eq_min_extrema_covering} follow from
Theorem~\ref{t_exc_covering}~(i) with $r=1$
(and, in~\eqref{eq_exc_covering_lower_decreasing}, with
a large enough~$\gamma$ depending on~$\theta$).
As for the upper bounds there, note that, by~\eqref{eq_extrema_typical},
for the disk of radius~$n^{1+x}$ (and the same~$u$)
the thresholds are asymptotic to $2(\sqrt{\theta}-x)^2\ln^2 n$
for the maximal local time, and to $2(\sqrt{\theta}+x)^2\ln^2 n$
for the minimal one; it is then elementary to obtain that
(if~$\tilde c$ is small enough)
$m_n=n^{\sqrt{2}\tilde c\delta\sqrt{\theta}(1+o(1))}$ in both cases,
and we use Theorem~\ref{t_exc_covering}~(ii).
\end{proof}

\section{Preliminaries: more notation and some tools}
\label{s_prelims}

\subsection{Excursion process and its stationary measure}
\label{s_exc_process}
Let $(S_k, k\geq 0)$ be the simple random walk (SRW) on~$\Z^2$
or on the torus~$\Z^2_n=\Z^2/n\Z^2$ (where the neighbourhood relation
and the boundaries of sets are defined in the same way as on~$\Z^2$).
We write~$\IP_x$ for the law of the walk started at~$x$,
and~$\IP_\mu$ when the starting point is chosen according
to a probability measure~$\mu$. For a set~$A$, let
\[
 \tau_A = \min\{k\geq 0: S_k\in A\},
 \qquad
 \tau_1(A) = \min\{k\geq 1: S_k\in A\}
\]
be the entrance and the hitting times of~$A$.
Also, for a finite $A\subset\Z^2$ we denote by
$\hm_A(x)=\lim_{\|y\|\to\infty}\IP_y[S_{\tau_A}=x]$, $x\in\partial A$,
the harmonic measure on~$A$.

We reproduce here
the construction from Section~3.4 of~\cite{CPV16},
with some mild changes.
Assume that two sets~$A$ and~$A'$ (which are subsets
of~$\Z^2_n$ or of $\Z^2$) are such that
$A\subset (A'\setminus\partial A')$.
By definition, an excursion~$Z$ from~$A$ to~$\partial A'$
is a simple random walk path
$Z=(Z_0, Z_1,\ldots, Z_m)$,
where $Z_0\in\partial A$, $Z_m\in\partial A'$,
$Z_k\notin\partial A'$ and $Z_k\sim Z_{k+1}$ for $k<m$. With some abuse
of notation, we denote by $Z_{\text{st}}:=Z_0$ and
$Z_{\text{end}}:=Z_m$ the starting and the ending points
of the excursion.
 From now on, when working with SRW on~$\Z^2_n$,
we will assume (unless explicitly stated otherwise)
that we are working with the \emph{stationary doubly
infinite version of the process}, i.e.,
we choose $S_0\in \Z^2_n$ uniformly at random and
let $(S_k, k\geq 0)$ and $(S_{-k}, k\geq 0)$
be conditionally (on the value of~$S_0$) independent
SRWs.
 To define the excursions, consider
the following sequence of stopping times:
\begin{align*}
 J_0 &= \tau_{\partial A},\\
 D_0 &= \inf\{t> J_0 : S_t \in \partial A'\},
\end{align*}
and
\begin{align*}
 J_k &= \inf\{t> D_{k-1} : S_t \in \partial A\},\\
 D_k &= \inf\{t> J_k : S_t \in \partial A'\},
\end{align*}
for $k\geq 1$. Then,
 $Z^{(i)}=(S_{J_i}, \ldots, S_{D_i})$ is the $i$th excursion
of~$S$ between~$\partial A$ and~$\partial A'$, for $i\geq 0$.
We also denote by
\begin{equation}
\label{df_E0}
\Gamma_0 = \big\{S_0\in\partial A, \text{there exists $m>0$
 such that }S_{-m}\in\partial A',
 S_{-k}\notin A\text{ for }0<k<m\big\}
\end{equation}
the event that the $0$th excursion starts at time $0$
in the doubly-infinite version of the process.

In the following, for the sake of conciseness,
we refer to excursions from~$\B(r)$
to~$\partial\B(R)$ (where $r+1<R$) as
``$r\to R$ excursions''.

Let us denote
\[
 \Es_A^{A'}
 = \sum_{z\in\partial A}\IP_z[\tau_1(\partial A')
<\tau_1(A)].
\]
Then, define the following measure on~$\partial A$:
\begin{align}
\hm_A^{A'}(y) &=
\frac{\IP_y[\tau_1(\partial A')<\tau_1(A)]}
{\Es_A^{A'} }.
\label{df_hm_set_RW}
\end{align}
Informally, it is the ``harmonic measure with respect to~$A'$'';
the ``real'' harmonic measure on~$A$
would be recovered as~$A'$
expands towards the whole space~$\Z^2$.
Similarly to Lemma~6.1 of~\cite{CT14}
 one can obtain the following important fact:
the measure
\begin{equation}
\label{df_psi}
\hat\pi(y,z) = \hm_A^{A'}(y)\IP_{y}[S_{\tau_1(\partial A')}=z]
\end{equation}
is invariant for the Markov chain
$(Z^{(j)}_{\text{st}}, Z^{(j)}_{\text{end}})$.

%

It is also important to have an expression for the average
interexcursion time (i.e., the time between the beginning
of two consecutive excursions, in the stationary regime).
\begin{lem}
\label{l_interexc_time}
 For the SRW on~$\Z^2_n$,
it holds that
\begin{equation}
\label{eq_interexc_time}
\IE (J_1-J_0\mid \Gamma_0)
= \IE (D_1-D_0\mid \Gamma_0)
= \frac{n^2}{\Es_A^{A'}}.
\end{equation}
\end{lem}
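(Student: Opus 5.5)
We will prove Lemma~\ref{l_interexc_time} with Kac's lemma (the mean return time to a set equals the inverse of its stationary measure), applied to a suitable stationary process. Consider the stationary doubly infinite SRW on~$\Z^2_n$. Its uniform invariant measure gives mass $1/n^2$ to each site. Let~$\Gamma_t$ be the event that an excursion starts at time~$t$, i.e., $\Gamma_0$ shifted by~$t$: $S_t\in\partial A$, and the walk's last visit to $\partial A'\cup A$ before time~$t$ was in~$\partial A'$. Define the indicator sequence $\xi_t=\1{\Gamma_t}$. This sequence is stationary, since the walk is stationary and $\Gamma_t$ is a shift-invariant functional of the path.

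Kac's formula for the stationary ergodic sequence~$(\xi_t)$ then gives
\[
\IE(J_1-J_0\mid\Gamma_0)=\frac{1}{\IP[\Gamma_0]}.
\]
On~$\Gamma_0$ we have $J_0=0$, and $J_1$ is the first return time of~$\xi$ to~$1$. Ergodicity holds because the walk on the finite torus is irreducible.

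It remains to compute $\IP[\Gamma_0]$. Time reversal of the stationary walk maps it to a SRW with the same uniform law, since SRW is reversible with respect to the uniform measure. Under the reversal, $\Gamma_0$ becomes the event $\{S_0=z\in\partial A,\ \tau_1(\partial A')<\tau_1(A)\}$. Here one must check that the conditions match: "$S_{-k}\notin A$ for $0<k<m$" together with $S_{-m}\in\partial A'$ is exactly "$\tau_1(\partial A')<\tau_1(A)$" for the reversed path. This uses $A\cap\partial A'=\emptyset$, and in particular the first visit to $\partial A'$ occurs before any return to~$A$. Hence
\[
\IP[\Gamma_0]=\sum_{z\in\partial A}\frac{1}{n^2}\,\IP_z[\tau_1(\partial A')<\tau_1(A)]=\frac{\Es_A^{A'}}{n^2},
\]
which gives the formula for $J_1-J_0$.

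For $D_1-D_0$, the same argument applies to the marker $\Gamma'_t=\{D_k=t \text{ for some } k\}$, the time an excursion ends. Alternatively, one can avoid repeating the computation. Under the conditional law given~$\Gamma_0$ (the Palm measure), the sequence of excursion/return-path blocks is stationary. This can be verified with the invariance of~$\hat\pi$ from~\eqref{df_psi}, since the pair (start, end) Markov chain is in stationarity under~$\Gamma_0$. Writing $D_1-D_0=(J_1-J_0)+(D_1-J_1)-(D_0-J_0)$, the last two terms have the same conditional expectation: both are durations of an excursion whose starting point has law $\hm_A^{A'}$, which is the first marginal of the stationary measure~$\hat\pi$. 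So they cancel in expectation.

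The main obstacle is the justification of the time-reversal identification of~$\Gamma_0$, together with the claim that the law of $S_0$ given $\Gamma_0$ is~$\hm_A^{A'}$. The latter also follows from the same reversal computation. Finiteness of the expectations is automatic, since the torus is finite.
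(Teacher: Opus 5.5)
Your proof is correct and follows essentially the paper's route: time reversal gives $\IP[\Gamma_0]=\Es_A^{A'}/n^2$ (and the law $\hm_A^{A'}$ of $S_0$ given $\Gamma_0$), and stationarity of the start-time point process gives mean gap $1/\IP[\Gamma_0]$, which is your Kac step. For $D_1-D_0$, your decomposition $D_1-D_0=(J_1-J_0)+(D_1-J_1)-(D_0-J_0)$, combined with the invariance of $\hat\pi$ (so that $S_{J_1}$ again has law $\hm_A^{A'}$ given $\Gamma_0$), is a more explicit version of the paper's ``by stationarity''. Keep that version rather than the $\Gamma'_t$ variant, because the latter gives the mean gap conditioned on an excursion \emph{ending} at time $0$, not on $\Gamma_0$.
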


\begin{proof}
By the time-reversal invariance of the stationary walk,
for $x\in\partial A$ we have
$\IP[\Gamma_0,S_0=x]=n^{-2}\IP_x[\tau_1(\partial A')<\tau_1(A)]$, so
\begin{equation}
\label{prob_E0}
\IP[\Gamma_0]=\frac{\Es_A^{A'}}{n^2}
\qquad\text{and}\qquad
\IP[S_0=x\mid \Gamma_0]=\hm_A^{A'}(x).
\end{equation}
The point process of the starting times of the $A\to\partial A'$ excursions
is stationary and has intensity~$\IP[\Gamma_0]$, so the mean time
between two consecutive points is~$1/\IP[\Gamma_0]$
(and the same holds for the ending times, by stationarity).
This implies~\eqref{eq_interexc_time}.
\end{proof}

%

Next, assume that $B\subset B'\subset A\subset A'\subset\Z^2$
are such that $A\cap \partial A' = \emptyset$
and $B\cap \partial B' = \emptyset$.
Now, we need a result for controlling the
expected number of visits to~$B$
and the expected number of $B\to \partial B'$
excursions generated by one $A\to \partial A'$
excursion with the initial point chosen
according to $\hm_A^{A'}$:
\begin{lem}
\label{l_expected_generated}
With one $A\to \partial A'$
excursion with the initial point chosen
according to $\hm_A^{A'}$,
 we have that the expected number of visits to~$B$
 is equal to $\frac{|B|}{\Es_A^{A'}}$,
and the expected number of $B\to \partial B'$
excursions is equal to $\frac{\Es_B^{B'}}{\Es_A^{A'}}$.
\end{lem}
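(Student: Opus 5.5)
The cleanest route is to deduce both identities from the stationary doubly infinite walk on a torus and the renewal identity of Lemma~\ref{l_interexc_time}. I will not compute the Green function of the excursion directly. Embed $A'$ into~$\Z^2_N$ with $N$ large, so that the sets are unchanged and the boundary relations $A\cap\partial A'=\emptyset$, $B\cap\partial B'=\emptyset$ still hold. Consider the stationary walk $(S_k)$ on~$\Z^2_N$ and its $A\to\partial A'$ excursions $Z^{(i)}$.

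By~\eqref{prob_E0} and the invariance of~\eqref{df_psi}, the excursion that starts at time~$0$ on~$\Gamma_0$ starts according to~$\hm_A^{A'}$. By stationarity of the point process of excursion starts, the sequence of excursions (with their starting points) is stationary under the Palm measure $\IP[\,\cdot\mid\Gamma_0]$. Let $V_i$ be the number of visits to~$B$ made during $[J_i,J_{i+1})$, and $W_i$ the number of $B\to\partial B'$ excursions that begin in that interval. Every visit to~$B\subset A$ happens between some $J_i$ and $D_i$, since after~$D_i$ the walk must re-enter~$\partial A$ before touching~$A$ again. So $V_i$ is exactly the number of visits to~$B$ by the excursion $Z^{(i)}$.

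Similarly, because $B'\subset A$, a $B\to\partial B'$ excursion that starts during $Z^{(i)}$ ends before the walk reaches~$\partial A'$. The walk must reach $\partial B'$ before leaving~$A'$, and $\partial B'\subset A\subset A'\setminus\partial A'$, so every path from~$B$ to~$\partial A'$ crosses~$\partial B'$ first. Hence $W_i$ is exactly the number of $B\to\partial B'$ excursions generated by~$Z^{(i)}$. The quantities we want are therefore $\IE[V_0\mid\Gamma_0]$ and $\IE[W_0\mid\Gamma_0]$.

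Now apply the Palm inversion formula (the ergodic/renewal-reward argument used in the proof of Lemma~\ref{l_interexc_time}). For a stationary marked point process with intensity $\IP[\Gamma_0]=\Es_A^{A'}/N^2$, the mean reward per unit time equals the intensity times the Palm mean reward per point. The mean number of visits to~$B$ per unit time is $\IP[S_0\in B]=|B|/N^2$. This gives
\[
\frac{|B|}{N^2}=\frac{\Es_A^{A'}}{N^2}\,\IE[V_0\mid\Gamma_0].
\]
Likewise, the intensity of $B\to\partial B'$ excursion starts is $\Es_B^{B'}/N^2$, by~\eqref{prob_E0} applied to the pair $(B,B')$. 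This gives $\IE[W_0\mid\Gamma_0]=\Es_B^{B'}/\Es_A^{A'}$, and both formulas follow.

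The main obstacle is justifying the Palm inversion rigorously, namely that stationary time averages equal intensity times Palm expectation. I would do this via a mass-transport / Campbell formula. Write
\[
\IE\sum_{t\in[0,T)}\1{S_t\in B}=\sum_i\IE\big[V_i\1{J_i\in[0,T)}\big]+O(\text{boundary terms}),
\]
and use the shift-invariance of the doubly infinite walk to get $\IE[V_i\1{J_i=t}]=\IP[\Gamma_0]\,\IE[V_0\mid\Gamma_0]$ for each~$t$. Then divide by~$T$ and let $T\to\infty$; the boundary terms are negligible because excursion lengths have finite mean on the finite torus.

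A second point to check is that the one excursion started from~$\hm_A^{A'}$ on~$\Z^2$ has the same law as~$Z^{(0)}$ under $\IP[\,\cdot\mid\Gamma_0]$. This holds because the excursion stays inside~$A'$, where the torus and the plane coincide, and its starting law is~$\hm_A^{A'}$ by~\eqref{prob_E0}.
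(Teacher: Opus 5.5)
Your proposal is correct and is essentially the paper's proof: embed $A'$ in a large torus, take the stationary walk, and compare the frequency $|B|/N^2$ of visits to $B$ and the frequency $\Es_B^{B'}/N^2$ of $B\to\partial B'$ excursion starts with the frequency $\Es_A^{A'}/N^2$ of $A\to\partial A'$ excursion starts. This uses that all those visits and smaller excursions happen inside the $A\to\partial A'$ excursions, which you verify more carefully than the paper does; the paper invokes the ergodic theorem where you use the Palm/Campbell (mass-transport) identity, and your identity stated ``for each $t$'' should be summed over $i$, i.e.\ $\sum_i\IE[V_i\1{J_i=t}]=\IE[V_0\1{\Gamma_0}]$, while the exact mass-transport form $\IP[S_0\in B]=\IE[V_0\1{\Gamma_0}]$ removes the need for any boundary terms.
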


\begin{proof}
Since the quantities in the statement only depend on
the simple random walk before it hits~$\partial A'$,
we may place the (finite) set~$A'$ onto a torus~$\Z^2_n$ with
large enough~$n$, and work with the stationary simple random
walk on that torus; recall from~\eqref{prob_E0} that, in the stationary
regime, the starting point of an $A\to\partial A'$ excursion
has the law~$\hm_A^{A'}$.
Then, the walk is in~$B$ with frequency~$n^{-2}|B|$,
and the $A\to\partial A'$ (respectively, $B\to\partial B'$) excursions
start with frequency $n^{-2}\Es_A^{A'}$
(respectively, $n^{-2}\Es_B^{B'}$), by~\eqref{prob_E0}
applied to each of the two pairs.
Since all the visits to~$B$ and all the $B\to\partial B'$ excursions
occur during the $A\to\partial A'$ excursions,
both claims follow from the ergodic theorem.
\end{proof}


Next, it is important to understand the particular case
of excursions between (the boundaries of) two
concentric disks.
Lemma~2.5 of~\cite{CP17} implies that for
any~$x\in \partial \B(r)$
\[
 \IP_x[\tau_1(\partial \B(R))<\tau_1(\B(r))]
= \frac{\hm_{\B(r)}(x)}{\frac{2}{\pi}\ln\frac{R}{r}+O(r^{-1})}.
\]
Summing the above in~$x\in \partial \B(r)$,
we obtain that
\begin{equation}
\label{expr_esc_disks}
 \Es_{\B(r)}^{\B(R)}
 = \Big(\frac{2}{\pi}\ln\frac{R}{r}+O(r^{-1})\Big)^{-1}.
\end{equation}

Let us note that the above (together with
Lemma~\ref{l_expected_generated}) implies the
following fact, that will be repeatedly used in
the sequel: on average, one $n\to en$ excursion
generates approximately one $\B(x,r)\to\partial\B(x,er)$ excursion
provided that $\B(x,er)\subset\B(n)$ (of course,
we also assume that~$r$ is sufficiently large).
Moreover, the number
of $\B(x,r)\to\partial\B(x,er)$ excursions
generated by~$m$ independent $n\to en$ excursions
is (relatively) concentrated around~$m$ when~$m$ is much larger
than $\ln\frac{en}{r}$, see Lemma~\ref{l_exc_count_one_pair} below.

We also need the following fact about the excursion process generated by a sequence
of independent larger excursions.
\begin{lem}
\label{l_restriction_hm_indep}
Let $B\subset B'\subset A\subset A'\subset\Z^2$ with
$A\cap\partial A'=\emptyset$ and $B\cap\partial B'=\emptyset$.
Let $(\tilde Z^{(j)})_{j\ge1}$ be i.i.d.\ $A\to\partial A'$ excursions,
each started from $\hm_A^{A'}$ on $\partial A$.
From each $\tilde Z^{(j)}$, extract all $B\to\partial
B'$ excursions contained there.
Then the resulting infinite sequence of $B\to\partial B'$
excursions is a Markov chain whose stationary entrance law on
$\partial B$ is $\hm_B^{B'}$.
\end{lem}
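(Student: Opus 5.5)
The plan is to realize the i.i.d.\ large excursions as pieces of the stationary simple random walk on a large torus, as in the proofs of Lemmas~\ref{l_interexc_time} and~\ref{l_expected_generated}. Place the finite set~$A'$ on~$\Z^2_N$ with $N$ large, and take the stationary doubly infinite walk. Its successive $A\to\partial A'$ excursions $(Z^{(i)})$ form a Markov chain through the pairs $(Z^{(i)}_{\text{st}},Z^{(i)}_{\text{end}})$, and by~\eqref{df_psi} this chain has stationary measure~$\hat\pi$. The $B\to\partial B'$ excursions of the walk all occur inside $A\to\partial A'$ excursions, because $B'\subset A$ and the walk must enter~$A$ before reaching~$\partial B$. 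Hence the sequence of $B\to\partial B'$ excursions read along the walk equals the concatenation, over~$i$, of the $B\to\partial B'$ excursions extracted from~$Z^{(i)}$.

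First I establish the Markov property of the extracted sequence in the i.i.d.\ setting directly. Condition on everything up to the end of a given $B\to\partial B'$ excursion, which ends at some $w\in\partial B'$ inside $\tilde Z^{(j)}$. By the strong Markov property, the rest of $\tilde Z^{(j)}$ is a simple random walk from~$w$ killed on~$\partial A'$. The later excursions $\tilde Z^{(j+1)},\ldots$ are fresh and independent, with entrance law $\hm_A^{A'}$. So the law of the next $B\to\partial B'$ excursion depends only on~$w$. Starting from~$w$, the walk either hits~$\partial B$ before~$\partial A'$, which gives the next excursion from that entry point; or it exits, after which we restart from an independent $\hm_A^{A'}$-distributed point, possibly several times, until some large excursion hits~$\partial B$. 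This gives a transition kernel $Q(w,\cdot)$ on~$\partial B$ for the entrance point, and given the entrance point the excursion is a simple random walk path run until~$\partial B'$. Hence the sequence is a Markov chain, driven by the chain of pairs (start, end). There is a degenerate case where large excursions never visit~$B$ with positive probability; since $B\subset A$ and~$A'$ is finite, the probability of hitting~$\partial B$ is positive, so infinitely many small excursions occur almost surely.

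Next I identify the stationary entrance law. The quickest route is to compare with the torus. In the stationary torus walk, the consecutive $A\to\partial A'$ excursions are not independent, but the torus chain of small excursions has the same kernel~$Q$. This holds because, after leaving through~$\partial A'$, the torus walk re-enters~$\partial A$ at a point whose law depends on the exit point; in the i.i.d.\ model that point is instead drawn from~$\hm_A^{A'}$. These two mechanisms differ, so I will not claim equal kernels. Instead I verify invariance by a cycle argument.

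For the i.i.d.\ model, the large excursions are i.i.d., so each block is a regeneration cycle. By renewal-reward, the long-run frequency with which small excursions enter at $y\in\partial B$ is
\[
\frac{\IE[\#\{B\to\partial B'\text{ excursions of one }\tilde Z\text{ starting at }y\}]}{\IE[\#\{B\to\partial B'\text{ excursions of one }\tilde Z\}]}.
\]
This long-run frequency is the stationary entrance law, given that the chain has a unique invariant law, which holds by irreducibility on its finite recurrent class. Each expectation concerns a single excursion started from~$\hm_A^{A'}$, so it is a single-excursion quantity that can be computed on the torus. There, by~\eqref{prob_E0} applied to the pair~$(B,B')$, small excursions start at~$y$ with frequency $N^{-2}\Es_B^{B'}\hm_B^{B'}(y)$. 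Large excursions start with frequency $N^{-2}\Es_A^{A'}$, each started from~$\hm_A^{A'}$ by~\eqref{prob_E0}. By the ergodic theorem, exactly as in Lemma~\ref{l_expected_generated}, the numerator equals $\Es_B^{B'}\hm_B^{B'}(y)/\Es_A^{A'}$ and the denominator equals $\Es_B^{B'}/\Es_A^{A'}$. The ratio is~$\hm_B^{B'}(y)$, which proves the claim.

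The main obstacle is justifying that "stationary entrance law" equals the cycle-average frequency, that is, uniqueness and ergodicity of the extracted chain. I would handle this by working with the chain observed at the starts of large excursions, which regenerates i.i.d., so no separate irreducibility argument is needed beyond the renewal theorem.
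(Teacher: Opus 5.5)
Your proposal is correct and follows essentially the paper's argument. The Markov property comes from the strong Markov property together with the independence of the $\tilde Z^{(j)}$'s. The stationary entrance law then follows from renewal--reward over the i.i.d.\ blocks, with the per-block expected counts $\Es_B^{B'}\hm_B^{B'}(y)/\Es_A^{A'}$ computed on a large torus exactly as in Lemma~\ref{l_expected_generated}.

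The half-stated and then retracted claim that the torus chain has the same kernel~$Q$ plays no role and should be deleted. Your regeneration remark (from every state a block ends a.s., so the invariant law is unique) usefully makes explicit a point the paper leaves implicit.
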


\begin{proof}
The Markov property is a consequence of the strong Markov property
of the SRW and of the independence of the~$\tilde Z^{(j)}$'s.
Since the extracted sequence is a concatenation of i.i.d.\ blocks
(those generated by~$\tilde Z^{(1)},\tilde Z^{(2)},\ldots$),
its stationary entrance law gives to $y\in\partial B$ a weight
proportional to the expected number of $B\to\partial B'$ excursions
started at~$y$ generated by one $A\to\partial A'$ excursion;
analogously to the proof of Lemma~\ref{l_expected_generated},
the latter is equal to
$\hm_{B}^{B'}(y)\,\Es_{B}^{B'}/\Es_{A}^{A'}$.
%
%
%
%
%
\end{proof}

\subsection{Sharp control of excursion counts}
\label{s_exc_control}
Here, we formulate a couple of results that permit
us to control various excursion counts.
In the following, we refer to any pair of
concentric disks~$\B(x,r)$ and~$\B(x,R)$
(where $r+1<R$) as a pair of nested disks.

First, we need a result for the number of ``smaller''
excursions between certain sets generated by independent
excursions between $\B(n)$ and $\partial\B(en)$
(started at the relative harmonic measure~$\hm_{\B(n)}^{\B(en)}$).
Fix~$n$; for $m<n$,
\begin{equation}
\label{admissible_collection}
  B_i=\B\big(x_i,{\tilde\alpha} \tfrac{n}{m}\big),
 \quad
 B'_i=\B\big(x_i,{\tilde\alpha} e\tfrac{n}{m}\big),
 \qquad i=1,\ldots,m^2
\end{equation}
is said to be an \emph{admissible collection}
of nested disks if ${\tilde\alpha}\geq \alpha=0.1$,
$B'_i\cap B'_j=\emptyset$ for $i\neq j$,
and $B'_i\subset \B(n)\setminus\partial\B(n)$ for $i=1,\ldots,m^2$
(i.e., these pairs of nested disks do not intersect
and occupy
a positive proportion of the area of the bigger disk).
We have
\begin{lem}
\label{l_excurs_control_excursions}
Assume that we have an admissible collection
of disks as above.
 Consider a sequence of independent
excursions between $\B(n)$ and $\partial\B(en)$ started at the relative harmonic measure,
and let~$\eta_j$ be the total number of $B_i\to\partial B'_i$
excursions, $i=1,\ldots,m^2$, generated by the $j$th
excursion of that sequence. Then
we have
\begin{equation}
\label{eq_excurs_control_Eeta}
\IE\eta_1 = m^2 \big(1+O\big(\tfrac{m}{n}\big)\big)
\end{equation}
and, for
$\delta\in(0,1)$,
\begin{equation}
\label{eq_excurs_control_excursions}
\IP\big[(1-\delta)k\IE\eta_1 \leq \eta_1+\cdots+\eta_{k}
  \leq (1+\delta)k\IE\eta_1 \big]
   \geq 1 - 2e^{-c\delta^2 k}.
\end{equation}
\end{lem}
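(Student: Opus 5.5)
The proof has two parts: an expectation computation and a concentration bound. The first is an exact identity plus an asymptotic evaluation. The second comes from i.i.d.\ structure together with an exponential moment bound on $\eta_1$.

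\emph{The expectation.} For~\eqref{eq_excurs_control_Eeta}, apply Lemma~\ref{l_expected_generated} with $A=\B(n)$, $A'=\B(en)$ and $B=B_i$, $B'=B'_i$ for each~$i$. The hypotheses hold because $B'_i\subset\B(n)\setminus\partial\B(n)$. This gives
\[
\IE\eta_1=\sum_{i=1}^{m^2}\frac{\Es_{B_i}^{B'_i}}{\Es_{\B(n)}^{\B(en)}} .
\]
By translation invariance and~\eqref{expr_esc_disks}, each $\Es_{B_i}^{B'_i}=(\frac{2}{\pi}+O(m/n))^{-1}$, since the inner radius is $\tilde\alpha n/m\geq \alpha n/m$. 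Likewise $\Es_{\B(n)}^{\B(en)}=(\frac2\pi+O(n^{-1}))^{-1}$. Taking the ratio and summing over the $m^2$ disks gives $m^2(1+O(m/n))$.

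\emph{Concentration.} The variables $\eta_j$ are i.i.d., because the big excursions are independent and each is started from $\hm_{\B(n)}^{\B(en)}$. So a Bernstein/Chernoff bound for sums of i.i.d.\ variables suffices, once we show that $\eta_1/m^2$ has an exponential moment uniformly in $n,m$. Concretely, the target is $\IE e^{\lambda\eta_1/m^2}\leq C$ for some fixed $\lambda>0$. Given this, $\mathrm{Var}(\eta_1/m^2)\leq C'$, and the standard argument gives
\[
\IE e^{t(\eta_1-\IE\eta_1)/m^2}\leq e^{C''t^2}\qquad\text{for }|t|\leq\lambda/2 .
\]
Using $\IE\eta_1\asymp m^2$ and optimizing $t\asymp\delta$, this yields both tails of~\eqref{eq_excurs_control_excursions} with bound $e^{-c\delta^2k}$.

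\emph{Main obstacle: the exponential moment of $\eta_1/m^2$.} I would first condition on the sequence of small excursions and decompose $\eta_1$ as follows. After any small excursion ends on some $\partial B'_i$, the walk either starts another small excursion (enters some $\partial B_j$) before hitting $\partial\B(en)$, or it exits. The key estimate is that, uniformly over starting points $y\in\B(en)$,
\[
\IP_y\big[\tau_{\partial\B(en)}<\tau_{\cup_j B_j}\big]\geq c/m^2 .
\]
Given this, $\eta_1$ is stochastically dominated by a geometric variable with mean $O(m^2)$, which has the required exponential moment after rescaling by $m^2$.

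To prove the uniform escape bound, I would work from a point of $\partial B'_i$ (or any point of $\B(n)$). By~\eqref{expr_esc_disks}, $\B(x_i,n/m)$-type annuli give constant probability of reaching distance $\asymp n$ before returning to $B_i$. From distance $\asymp n$ inside $\B(en)$, the walk must then reach $\partial\B(en)$ before hitting the union of the $m^2$ disks. This is the delicate point. I would handle it with a capacity-type bound: the expected number of small excursions made before exit, starting from a generic point, is at most of order $m^2$ by the Green function/Lemma~\ref{l_expected_generated} computation. The probability of making at least one further small excursion before exit is therefore bounded away from~$1$ by a quantity $\asymp m^{-2}$. To make this rigorous, compare the walk's excursion count with its stationary mean via Lemma~\ref{l_restriction_hm_indep}, and use a Markov-inequality/renewal argument. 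That argument shows the number of returns has tail $\IP[\eta_1>sm^2]\leq e^{-cs}$: split into blocks of $2C m^2$ potential returns, and in each block there is probability at least $1/2$ of exiting, uniformly in the starting point, by Markov's inequality applied to the uniformly bounded mean.
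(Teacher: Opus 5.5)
You handle \eqref{eq_excurs_control_Eeta}, the i.i.d.\ structure of the $\eta_j$'s and the Bernstein step correctly, exactly as the paper does. The gap is in the exponential tail of $\eta_1/m^2$.

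Your key estimate $\IP_y[\tau_{\partial\B(en)}<\tau_{\cup_jB_j}]\geq c/m^2$ is false, even if you restrict it to endpoints $y\in\partial B'_i$ of small excursions. An admissible collection may have its centres $x_i$ on a square grid of mesh $\frac{3n}{2m}$ inside $\B(\frac{9n}{10})$; for large~$m$ this accommodates $m^2$ disjoint $B'_i$'s. Start from $\partial B'_i$ with $x_i$ near the origin. The walk must cross order~$m$ cells of size $\asymp n/m$ before leaving this cloud, and in each cell it hits the local disk $B_j$ with probability bounded away from zero, by the usual two-dimensional hitting estimates for concentric disks. So its escape probability is $e^{-cm}\ll m^{-2}$.

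Hence there is no geometric domination with parameter $c/m^2$. Your inference from a mean of order $m^2$ to a per-step escape probability $\gtrsim m^{-2}$ is also invalid, because the escape probability is highly non-uniform in the starting point. (Even ignoring the other disks, reaching distance $\asymp n$ before returning to $B_i$ has probability of order $1/\ln m$, not a constant.)

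Your last paragraph (blocks of $2Cm^2$ excursions, Markov's inequality, strong Markov property at the ends of excursions) is exactly the paper's argument. It does give $\IP[\eta_1\geq sm^2]\leq 2e^{-cs}$, but only once you know that $\sup_{z\in\B(en)}\IE_zN\leq Cm^2$. Here $N$ is the number of $B_i\to\partial B'_i$ excursions, $i=1,\ldots,m^2$, made by the walk started at~$z$ before it hits $\partial\B(en)$, and the supremum must include points deep inside the cloud of disks. Lemmas~\ref{l_expected_generated} and~\ref{l_restriction_hm_indep} cannot give this, since they only concern the start from $\hm_{\B(n)}^{\B(en)}$ or the stationary regime. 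This uniform bound is the real content of the step, and it is missing from your proposal.

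The paper gets it by a time comparison. For every~$z$, $\IE_z\tau_{\partial\B(en)}\leq Cn^2$. Each small excursion lasts on average at least $c(n/m)^2$, whatever its starting point. So the strong Markov property gives $c(n/m)^2\,\IE_zN\leq Cn^2$.

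Alternatively, you could use a last-exit decomposition, $\IE_zN=\sum_i\sum_{y\in\partial B_i}G_{\B(en)}(z,y)\,\IP_y[\tau_1(\partial B'_i)<\tau_1(B_i)]$, where $G_{\B(en)}$ is the Green's function of the walk killed on $\partial\B(en)$. With the logarithmic bound on this Green's function and the disjointness of the $B'_i$'s, this gives $\IE_zN\leq C\sum_i\ln\frac{Cn}{\max\{n/m,\|z-x_i\|\}}=O(m^2)$. Either way, this estimate has to be actually carried out.
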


\begin{proof}
First, \eqref{eq_excurs_control_Eeta} follows from
Lemma~\ref{l_expected_generated} (applied to each of the pairs
$(B_i,B'_i)$) and~\eqref{expr_esc_disks}.
Next, for $z\in\B(en)$, let~$N$ be the number of $B_i\to\partial B'_i$
excursions, $i=1,\ldots,m^2$, made by the SRW started at~$z$ before it
hits~$\partial\B(en)$. Then $\sup_{z\in\B(en)}\IE_zN\leq Cm^2$:
indeed, each such excursion takes time at least $c(n/m)^2$ on average,
while the expected hitting time of~$\partial\B(en)$ is at most~$Cn^2$,
uniformly in the starting points. By the Markov inequality and the strong
Markov property (applied at the ends of these excursions), this implies
that $\IP[\eta_1\geq sm^2]\leq 2e^{-c's}$ for all $s\geq 0$.
Then, \eqref{eq_excurs_control_excursions} follows from the Bernstein inequality
(basically, $m^{-2}\eta_1$ is a mean-$1$ random variable with
exponential tail).
\end{proof}

We also need to control the number of excursions
between just one pair of nested disks, which can be placed
anywhere in~$\B(n)$:
\begin{lem}
\label{l_exc_count_one_pair}
Let $B=\B(x,r)$ and $B'=\B(x,er)$ be such that
$B'\subset\B(n)\setminus\partial\B(n)$ (and~$r$ is large enough).
Consider a sequence of independent
excursions between $\B(n)$ and $\partial\B(en)$
started at the relative harmonic measure,
and let~$\zeta_j$ be the number of $B\to\partial B'$ excursions
generated by the $j$th excursion of that sequence. Then
$\IE\zeta_1=1+O(r^{-1})$, and, for all $k\geq 1$ and $\delta>0$,
\begin{equation}
\label{eq_exc_count_one_pair}
\IP\big[|\zeta_1+\cdots+\zeta_k-k\IE\zeta_1|>\delta k\big]
 \leq 2\exp\Big(-\frac{c(\delta\wedge\delta^2)k}{\ln\frac{en}{r}}\Big).
\end{equation}
\end{lem}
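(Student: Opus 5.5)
The plan is to treat $\zeta_1,\zeta_2,\ldots$ as i.i.d.\ nonnegative integer random variables, compute their mean exactly, and then show a \emph{geometric} tail of order $\ln\frac{en}{r}$. The inequality~\eqref{eq_exc_count_one_pair} will then follow from a Bernstein-type bound for sums of i.i.d.\ sub-exponential variables.

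The mean is immediate. By Lemma~\ref{l_expected_generated} applied with $A=\B(n)$, $A'=\B(en)$, $B=\B(x,r)$, $B'=\B(x,er)$, we have
\[
\IE\zeta_1=\Es_B^{B'}/\Es_{\B(n)}^{\B(en)}.
\]
By~\eqref{expr_esc_disks} (and translation invariance) both capacities equal $(\frac{2}{\pi}+O(r^{-1}))^{-1}$ and $(\frac{2}{\pi}+O(n^{-1}))^{-1}$ respectively. Since $r\le n$, this gives $\IE\zeta_1=1+O(r^{-1})$.

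For the tail, I would describe $\zeta_1$ via the strong Markov property. The walk starts on $\partial\B(n)$ and first has to reach $\partial B$ before $\partial\B(en)$; call this probability $p_0$. After completing a $B\to\partial B'$ excursion, the walk is at some $y\in\partial B'$. From there it either returns to $B$ before hitting $\partial\B(en)$, with probability $p(y)$, or it exits.

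The key estimate is that, uniformly in $y\in\partial B'$,
\[
1-p(y)\ \ge\ \frac{c}{\ln\frac{en}{r}}.
\]
I would get this from the standard two-dimensional estimate: from distance $er$ to $x$, the probability of reaching distance of order $n$ around $x$ before distance $r$ is $\asymp 1/\ln(n/r)$, via the potential kernel $a(\cdot)$ and optional stopping. One more step is needed because the relevant exit is through $\partial\B(en)$ rather than a disk centred at $x$. Once the walk is at distance $\ge n/2$ from $x$ (and we may enlarge, since $B'\subset\B(n)$), it hits $\partial\B(en)$ before returning to $\B(x,r)$ with probability bounded below by a constant. Again this is a potential-kernel estimate, using $\ln(en/r)$ versus $\ln(n/r)$ and the fact that $\partial\B(en)$ is at distance comparable to $n$.

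Hence $\zeta_1$ is stochastically dominated by a geometric variable with success probability $q:=c/\ln\frac{en}{r}$. It follows that $\IE e^{\lambda\zeta_1}<\infty$ for $\lambda\le cq$, with $\operatorname{Var}\zeta_1\le C/q$ and the $\psi_1$-norm of $\zeta_1-\IE\zeta_1$ at most $C/q$. This step, namely the uniform lower bound on the escape probability when $x$ may sit near $\partial\B(n)$, is where I expect the main care to be needed. When $B'$ is close to $\partial\B(n)$, the walk escapes more easily, so only the bound that uses $\ln\frac{en}{r}$ as a worst case is required; monotonicity of escape probabilities in the target set should suffice.

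Finally, apply Bernstein's inequality for i.i.d.\ variables with sub-exponential norm $K\asymp\ln\frac{en}{r}$ (with variance also $\le CK$):
\[
\IP\big[|\textstyle\sum_{j\le k}\zeta_j-k\IE\zeta_1|>\delta k\big]\le 2\exp\Big(-c\min\Big(\frac{\delta^2k^2}{kK},\frac{\delta k}{K}\Big)\Big),
\]
which is exactly $2\exp\big(-c(\delta\wedge\delta^2)k/\ln\frac{en}{r}\big)$. The condition that $r$ be large enough is used only to absorb the $O(r^{-1})$ errors into the constants.
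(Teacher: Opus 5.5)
\textbf{There is a genuine gap in the variance bound.} Your outline is the same as the paper's: i.i.d.\ $\zeta_j$'s, the mean from Lemma~\ref{l_expected_generated} and~\eqref{expr_esc_disks}, an escape probability of order $1/\ln\frac{en}{r}$ from $\partial B'$, and then Bernstein or exponential Chebyshev. However, the step that puts $\ln\frac{en}{r}$ (and not its square) in the denominator is not justified. You write that, since $\zeta_1$ is dominated by a Geometric($q$) with $q=c/\ln\frac{en}{r}$, one has $\operatorname{Var}\zeta_1\le C/q$. This does not follow. Domination only gives $\IE\zeta_1^2\le \IE G^2\asymp q^{-2}=\ln^2\frac{en}{r}$.

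Adding $\IE\zeta_1=1+O(r^{-1})$ is not enough either. Take a variable equal to $T\approx q^{-1}\ln q^{-1}$ with probability $1/T$ and to $0$ otherwise. For small $q$ it is dominated by that geometric, has mean $1$, and has second moment $T\gg q^{-1}$. With variance of order $\ln^2\frac{en}{r}$, Bernstein only gives $\exp\big(-c\delta^2k/\ln^2\frac{en}{r}\big)$ for $\delta\le1$, which is weaker than~\eqref{eq_exc_count_one_pair}.

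The missing quantity is exactly the $p_0$ you introduce but never bound. You need $p_0=\IP[\zeta_1\ge1]\le C/\ln\frac{en}{r}$: a typical big excursion does not visit $B$ at all, and those that do visit it about $\ln\frac{en}{r}$ times.

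\textbf{How the paper supplies this.} It uses the matching upper bound on the escape probability as well as the lower bound. Since $\B(x,(e-1)n)\subset\B(en)\subset\B(x,4n)$, one has
$c/\ln\frac{en}{r}\le\IP_y[\tau_{\partial\B(en)}<\tau_B]\le C/\ln\frac{en}{r}$ for all $y\in\partial B'$. Hence, conditionally on $\zeta_1\ge1$, $\zeta_1$ is dominated by a Geometric($c/\ln\frac{en}{r}$) and also dominates a Geometric($C/\ln\frac{en}{r}$). The latter has mean at least $C^{-1}\ln\frac{en}{r}$, so $\IE\zeta_1\le2$ forces $\IP[\zeta_1\ge1]\le 2C/\ln\frac{en}{r}$.

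It follows that $\IE\big[\zeta_1^2e^{\lambda\zeta_1}\big]\le\IP[\zeta_1\ge1]\,\IE\big[G^2e^{\lambda G}\big]\le C'\ln\frac{en}{r}$ for $\lambda\le c'/\ln\frac{en}{r}$. This gives $\ln\IE e^{\pm\lambda(\zeta_1-\IE\zeta_1)}\le C''\lambda^2\ln\frac{en}{r}$ on that range. The exponential Chebyshev inequality with $\lambda=c''(\delta\wedge1)/\ln\frac{en}{r}$ then finishes the proof; equivalently, use your Bernstein bound, now with $\sigma^2\lesssim K$ actually justified.

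Alternatively, you could bound $p_0$ directly, for instance by the Green's-function argument of Lemma~\ref{l_exc_escape_A} applied to $B$. Either way, this ingredient has to be added.
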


\begin{proof}
The first claim follows from Lemma~\ref{l_expected_generated}
and~\eqref{expr_esc_disks}.
Since
$\B(x,(e-1)n)\subset\B(en)\subset\B(x,4n)$, Lemma~3.11
of~\cite{P21} implies that
$c\big(\ln\frac{en}{r}\big)^{-1}\leq
\IP_y\big[\tau_{\partial\B(en)}<\tau_{B}\big]
\leq C\big(\ln\frac{en}{r}\big)^{-1}$ for all $y\in\partial B'$.
Therefore, given that $\zeta_1\geq 1$, the random variable~$\zeta_1$
is dominated by a Geometric($c/\ln\frac{en}{r}$) random variable
and dominates a Geometric($C/\ln\frac{en}{r}$) one;
since $\IE\zeta_1\leq 2$, the latter fact implies that
$\IP[\zeta_1\geq 1]\leq 2C/\ln\frac{en}{r}$.
It is then elementary to obtain that
$\ln\IE e^{\pm\lambda(\zeta_1-\IE\zeta_1)}\leq C'\lambda^2\ln\frac{en}{r}$
for all $\lambda\in\big[0,c'/\ln\frac{en}{r}\big]$,
and (since the $\zeta_j$'s are i.i.d.)
we conclude with the exponential Chebyshev inequality,
used with $\lambda=c''(\delta\wedge 1)/\ln\frac{en}{r}$.
\end{proof}

\subsection{Excursions and soft local times}
\label{s_exc_SLT}
Here we recall the construction of excursions with the method of
soft local times~\cite{SLT} (see also Section~5.1 of~\cite{P21}),
and obtain large deviation estimates for the soft local times, which
permit us to compare ``real'' excursions with independent ones.
The setup is general enough to cover also the situations in the
companion paper; in particular, the excursions between the boundaries
of a pair of nested disks may be constructed
simultaneously with excursions elsewhere.

\medskip
\noindent
\textbf{Setup.} Let~$X$ be one of the following processes:
(a)~SRW on~$\Z^2$; (b)~SRW on the torus~$\Z^2_n$;
(c)~the concatenation of a sequence of independent
$N\to eN$ excursions (``big excursions''), each of them started
at~$\hm_{\B(N)}^{\B(eN)}$ (we write~$N$ because, in Section~\ref{s_proofs_main},
this case is used with several different radii of the big excursions).
The pair of interest is
\[
 A=\B(y_0,r), \qquad A'=\B(y_0,R),
\]
where $R\geq c_*r$ for a fixed $c_*>1$ (and $R<n/3$ in the
case~(b)).
In addition, simultaneously with the $A\to\partial A'$ excursions,
we may construct the excursions of~$X$ between~$\partial\widetilde A$
and~$\partial\widetilde A'$, where $\widetilde A\subset\widetilde A'$ are finite (possibly
empty) sets such that $\widetilde A\cap\partial\widetilde A'=\emptyset$
and $A'\cap\widetilde A'=\emptyset$ (this is needed to obtain independent
excursions in disjoint pairs of nested disks, see the last part
of Corollary~\ref{c_SLT_domination}).
Abbreviate $\A=A\cup\widetilde A$, $\A'=A'\cup\widetilde A'$, and\footnote{This is
an abuse of notation, since
the boundary of the set $A'\cup\widetilde A'$ can be strictly smaller
when~$A'$ and~$\widetilde A'$ are adjacent; note, however, that
$\partial\A=\partial A\cup\partial\widetilde A$.}
$\partial\A':=\partial A'\cup\partial\widetilde A'$.
In the case~(c) we assume that $\A'\subset\B(N)$ and
$\A'\cap\partial\B(N)=\emptyset$.
The starting point~$x_0$ of~$X$ is such that
$x_0\notin\A\cup(A'\setminus\partial A')$ (in the case~(c), $x_0$ is
the starting point of the first big excursion).

Let $Z^{(1)},Z^{(2)},\ldots$ be the successive excursions of~$X$
between~$\partial\A$ and~$\partial\A'$ (each of them is either
an $A\to\partial A'$ excursion or
a $\widetilde A\to\partial\widetilde A'$ excursion), and
set $Z^{(0)}_{\text{end}}:=x_0$. For $x\in\{x_0\}\cup\partial\A'$
and $y\in\partial\A$ let $\phi(x,y)=\IP_x[X_{\tau_1(\A)}=y]$
(in the case~(c), the current big excursion is continued from~$x$
and then followed by independent big excursions), and let
$\psi(x)=\sum_{y\in\partial A}\phi(x,y)$.
Given $Z^{(i-1)}_{\text{end}}=x$, the starting point of~$Z^{(i)}$
has the law~$\phi(x,\cdot)$, and, given also that it equals
$y\in\partial A$ (respectively, $y\in\partial\widetilde A$),
$Z^{(i)}$ is a SRW path started at~$y$ and
stopped upon hitting~$\partial A'$
(respectively, $\partial\widetilde A'$).

We construct these excursions with soft local times as in
Section~5.1 of~\cite{P21} (see also Figure~2 of~\cite{CP17}),
using a marked Poisson point process on~$\partial\A\times\R_+$ with
intensity (counting measure)$\,\otimes\,du$, where the independent
mark of $(z,u)$ is a SRW path started at~$z$
and run until hitting~$\partial A'$ (if $z\in\partial A$)
or~$\partial\widetilde A'$ (if $z\in\partial\widetilde A$). This yields the excursions
$Z^{(1)},Z^{(2)},\ldots$ together with random variables
$\xi_1,\xi_2,\ldots$, and the soft local time at the $t$th step is
\begin{equation}
\label{df_SLT_torus}
 G_t(y) = \sum_{i=1}^t \xi_i\,\phi\big(Z_{\text{end}}^{(i-1)},y\big),
 \qquad y\in\partial\A.
\end{equation}
Let $\F_i=\sigma(\xi_j,Z^{(j)},j\leq i)$.
We need the following properties of this construction:
\begin{prop}
\label{p_SLT_facts}
\begin{itemize}
 \item[(i)] The sequence $(Z^{(i)},i\geq 1)$ has the law of
 the successive excursions of~$X$
 between~$\partial\A$ and~$\partial\A'$.
 \item[(ii)] The random variables $\xi_1,\xi_2,\ldots$ are i.i.d.\
 Exponential($1$), and, for all $i\geq 1$, $\xi_i$ is independent of
 $\F_{i-1}\vee\sigma(Z^{(i)})$.
 \item[(iii)] For all $t\geq 1$, the points of the Poisson process
 lying below (or on) the graph of~$G_t$ are exactly those chosen
 in the first~$t$ steps.
\end{itemize}
\end{prop}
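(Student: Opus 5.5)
The plan is to recall the soft-local-times construction explicitly and then check (ii) and (iii) by induction on the step number. Property (i) follows from these two by a short argument.

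\textbf{The construction.} Let $\Lambda=\{(z_\lambda,u_\lambda)\}$ be the Poisson process on $\partial\A\times\R_+$, with independent marks. Set $G_0\equiv0$. Given $\F_{i-1}$, and hence $x:=Z^{(i-1)}_{\text{end}}$, define
\[
 \xi_i=\inf\big\{t\geq0:\ \exists\,\lambda \text{ not yet chosen with } u_\lambda\leq G_{i-1}(z_\lambda)+t\,\phi(x,z_\lambda)\big\}.
\]
The point $\lambda$ that achieves this infimum is chosen; we set $Z^{(i)}_{\text{st}}=z_\lambda$ and let $Z^{(i)}$ be its mark. Then $G_i=G_{i-1}+\xi_i\phi(x,\cdot)$, which is exactly~\eqref{df_SLT_torus}. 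Note that $\phi(x,\cdot)$ is a probability measure on $\partial\A$, since $X$ hits $\A$ almost surely in each of the cases (a)--(c).

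\textbf{Induction hypothesis.} For the step $i-1\to i$ I would assume the following. Conditionally on $\F_{i-1}$, the unchosen points of $\Lambda$ form a Poisson process with intensity $\1{u>G_{i-1}(z)}\,dz\,du$ (counting measure in $z$), carrying independent SRW marks. Moreover, the chosen points are exactly the points with $u\leq G_{i-1}(z)$; this is (iii) at step $i-1$.

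This is the key step, and it is a spatial Markov (stopping-set) property of Poisson processes. The region $\{u\leq G_{i-1}(z)\}$ is a stopping set with respect to the filtration generated by revealing $\Lambda$ below increasing graphs. The strong Markov property of Poisson processes for stopping sets then shows that the configuration above that graph is a fresh Poisson process, independent of what has been revealed.

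\textbf{The inductive step.} For each $z$, the vertical gaps above $G_{i-1}(z)$ are independent. Hence, conditionally on $\F_{i-1}$, the first time $t$ at which the rising surface $G_{i-1}+t\phi(x,\cdot)$ meets a point is a minimum of independent exponentials with rates $\phi(x,z)$. These rates sum to $1$, so $\xi_i\sim$ Exponential($1$), and $\xi_i$ is independent of $z_\lambda$. By the competing-exponentials fact, $z_\lambda$ has law $\phi(x,\cdot)$. The mark of the chosen point is a fresh SRW path independent of everything else. This gives (ii), and it gives the correct conditional law of $Z^{(i)}$.

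There is one point to verify: no point lies strictly between $G_{i-1}$ and $G_i$ other than the chosen one. This holds by minimality of $\xi_i$. Ties occur with probability zero. Together with the hypothesis that no unchosen points lie below $G_{i-1}$, this gives (iii) at step $i$ and propagates the Poisson description above $G_i$.

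\textbf{Property (i).} The conditional law of $Z^{(i)}$ given $\F_{i-1}$ depends only on $Z^{(i-1)}_{\text{end}}$, and it coincides with the transition mechanism of the excursion chain. This mechanism is the one stated in the Setup, and it comes from the strong Markov property of $X$ (in case (c), this uses independence of the big excursions). Induction on finite-dimensional distributions then gives (i).

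\textbf{Main obstacle.} The main obstacle is a rigorous justification of the stopping-set Markov property, because the threshold $G_{i-1}$ is random and depends on earlier points. If needed, I would cite Section~5.1 of~\cite{P21} or~\cite{SLT} for this step rather than reprove it. Alternatively, I would discretize the $u$-axis and argue via the independence of Poisson counts in disjoint sets.
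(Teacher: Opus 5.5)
Your proposal is correct and takes essentially the same route as the paper: for (i)--(ii) the paper simply cites Lemma~5.1 and Proposition~5.3 of~\cite{P21}, whose content is the memorylessness/competing-exponentials induction on the Poisson configuration above the current graph of $G_{i-1}$ that you sketch. The paper says (iii) holds by construction, which is your minimality-plus-no-ties check, and it handles the Poisson stopping-set step by citation, exactly as you propose to do with~\cite{P21} or~\cite{SLT}.
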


\begin{proof}
Items (i)--(ii) follow from Lemma~5.1 and
Proposition~5.3 of~\cite{P21}, and~(iii) holds by construction.
\end{proof}

When $\widetilde A=\widetilde A'=\emptyset$
we speak of the \emph{direct} construction of the
$A\to\partial A'$ excursions, and otherwise of the \emph{joint} one.
For $k\geq 1$ let~$T_k$ be the index of the $k$th excursion
between~$\partial A$ and~$\partial A'$ in~$(Z^{(i)})$, and let
\[
 L_k(y)=G_{T_k}(y), \quad y\in\partial A, \qquad
 M_k=\sum_{y\in\partial A}L_k(y)
 = \sum_{i=1}^{T_k}\xi_i\,\psi\big(Z_{\text{end}}^{(i-1)}\big)
\]
be the soft local time on~$\partial A$ at the moment when the
$k$th $A\to\partial A'$ excursion is chosen, and its total mass.
In the direct construction, $T_k=k$ and $M_k=\xi_1+\cdots+\xi_k$
(the ``intrinsic clock'' of the soft local time); in the joint one,
$M_k$ still has the same law (see Lemma~\ref{l_LD_SLT}~(i)).

\begin{rem}
\label{r_consistency}
Lemma~2.10 of~\cite{CP17} states that the law of the soft local
time~$L_k(\cdot)$ on~$\partial A$ is the same in the direct and in
the joint constructions. This is not correct in general
(only the laws of the excursions and of the total mass~$M_k$,
and the expectations $\IE L_k(y)$, are the same).
Indeed, in the direct construction,
$L_1(\cdot)=\xi_1\IP_{x_0}[X_{\tau_1(A)}=\cdot\,]$
is a.s.\ proportional to a deterministic measure on~$\partial A$,
while in the joint one $L_1(\cdot)=\xi_1\phi(x_0,\cdot)$ on $\{T_1=1\}$,
and $L_1(\cdot)=\xi_1\phi(x_0,\cdot)+\xi_2\phi(x,\cdot)$
on $\{T_1=2,Z^{(1)}_{\text{end}}=x\}$.
So, the two laws can only coincide if the restrictions
to~$\partial A$ of the measures~$\phi(x,\cdot)$,
for $x=x_0$ and for the possible endpoints $x\in\partial\widetilde A'$
of the first excursion, are all proportional to each other, i.e., if
the (normalized) entrance measure to~$A$ does not depend on the
starting point. This is not the case in general,
while~\eqref{kernel_flat} gives approximate independence of the
starting point when~$R/r$ is large.
In any case, we do not need this property, since
Lemma~\ref{l_LD_SLT} and
Corollary~\ref{c_SLT_domination} below
apply to the joint construction directly.
\end{rem}
In the next lemma, we collect some technical facts
about hitting probabilities.
\begin{lem}
\label{l_kernel_SLT}
There exist $r_0\geq 1$, $\beta\in(0,1]$, and positive constants
$c_5,C_5$ (depending only on~$c_*$) such that, for all $r\geq r_0$,
$x\in\{x_0\}\cup\partial\A'$, and $y,z\in\partial A$,
\begin{align}
 c_5\,\psi(x)\hm_A(y)\leq\phi(x,y)
 &\leq C_5\,\psi(x)\hm_A(y),
\label{kernel_Harnack}\\
 \Big|\frac{\phi(x,y)}{\psi(x)\hm_A(y)}-1\Big|
 &\leq C_5\,\frac{r}{R}\ln\frac{R}{r}\qquad\text{(if $\psi(x)>0$)},
\label{kernel_flat}\\
 \Big|\frac{\phi(x,y)}{\hm_A(y)}-\frac{\phi(x,z)}{\hm_A(z)}\Big|
 &\leq C_5\,\psi(x)\Big(\frac{\|y-z\|}{r}\Big)^\beta,
\label{kernel_Holder}\\
 \Big|\frac{\hm_A^{A'}(y)}{\hm_A(y)}-1\Big| &\leq\frac{C_5}{r}.
\label{hm_vs_hmrel}
\end{align}
Moreover, \eqref{kernel_Holder} remains valid (with a larger
constant~$C_5$) with~$\hm_A^{A'}$ in place of~$\hm_A$.
\end{lem}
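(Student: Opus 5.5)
The plan is to reduce all four estimates to classical potential-theoretic facts about the simple random walk near a disk: the Harnack inequality, the asymptotics of the harmonic measure $\hm_A$ of $A=\B(y_0,r)$, and the Beurling/H\"older-type regularity of hitting distributions. First I would decompose $\phi(x,\cdot)$ restricted to $\partial A$ by the last exit decomposition. Fix an intermediate circle $\partial\B(y_0,\rho)$ with $r<\rho<R$, say $\rho=\sqrt{rR}$ or $\rho=c r$ for a suitable constant $c$ (any $\rho$ with $r\ll\rho\ll R$ when $R/r$ is large). Since $x\in\{x_0\}\cup\partial\A'$ lies outside $A'\setminus\partial A'$, and $\widetilde A'$ is disjoint from $A'$, any path from~$x$ into~$A$ must cross $\partial\B(y_0,\rho)$. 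The strong Markov property at the first entrance to $\B(y_0,\rho)$ therefore gives
\[
\phi(x,y)=\sum_{w}\mu_x(w)\,\IP_w\big[X_{\tau_1(\A)}=y\big],
\]
where $\mu_x$ is a sub-probability measure on $\partial\B(y_0,\rho)$. Here the walk from~$w$ may still wander away, hit $\widetilde A$ first, or (in case~(c)) reach $\partial\B(N)$ and be replaced by a fresh big excursion. Summing over $y\in\partial A$ gives $\psi(x)$.

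The key input is the following uniform statement: for every $w\in\partial\B(y_0,\rho)$, the normalized hitting distribution of~$A$, conditioned on $\{X_{\tau_1(\A)}\in\partial A\}$, is close to $\hm_A$. For the flatness bound~\eqref{kernel_flat} I would take $\rho$ comparable to~$R$ and use the classical estimate $\IP_w[S_{\tau_A}=y\mid \tau_A<\infty\text{-type event}]=\hm_A(y)(1+O(\frac{r}{\rho}\ln\frac{\rho}{r}))$, as in Lawler--Limic or Lemma~3.10 of~\cite{P21}. The delicate point is that after reaching $\partial\B(y_0,\rho)$ the walk may exit to distance $\gg\rho$ and return, possibly after the interposed excursions in~$\widetilde A$ or a restart of the big excursion. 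I would handle this by iterating the decomposition: each return to $\partial\B(y_0,\rho)$ from outside gives again a mixture of measures of the form $\sum_w\nu(w)\IP_w[\cdot]$, so the final law is a mixture, over entrance points $w$ of the \emph{last} crossing of $\partial\B(y_0,\rho)$ before hitting~$A$, of hitting distributions from~$w$ killed on exiting $\B(y_0,R')$ for a suitable $R'$. Each component is uniformly $\hm_A(1+O(\frac rR\ln\frac Rr))$. Because all the error bounds are uniform in~$w$, they survive the mixing.

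With this in hand, the three kernel bounds follow. The Harnack bound~\eqref{kernel_Harnack} uses the same decomposition with $\rho=c_*' r$, where constants suffice: the discrete Harnack inequality in the annulus gives $\IP_w[S_{\tau_A}=y]\asymp \hm_A(y)\IP_w[\tau_A<\cdot]$ uniformly, and these constants depend only on~$c_*$ through the available room $R\ge c_* r$. The H\"older bound~\eqref{kernel_Holder} comes from the fact that $y\mapsto \IP_w[S_{\tau_A}=y]/\hm_A(y)$ is, by last-exit/time reversal, a (discrete) harmonic function of the reversed walk near~$\partial A$. Equivalently, the Poisson kernel ratio is H\"older by standard boundary Harnack for the disk, and I would cite the discrete boundary Harnack principle for disks, e.g.\ Lawler's book, with some exponent~$\beta$.

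Finally, \eqref{hm_vs_hmrel} follows from Lemma~2.5 of~\cite{CP17} as quoted before~\eqref{expr_esc_disks}. That lemma gives $\IP_y[\tau_1(\partial A')<\tau_1(A)]=\hm_A(y)/(\frac2\pi\ln\frac Rr+O(r^{-1}))$; normalizing by $\Es_A^{A'}$ from~\eqref{expr_esc_disks} yields the ratio $1+O(r^{-1})$, since $\ln\frac Rr\ge\ln c_*>0$. Combining this with~\eqref{kernel_Holder} gives the final remark, because $\hm_A^{A'}/\hm_A$ is within $C/r$ of~$1$ and $\|y-z\|\ge1$ whenever $y\ne z$. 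I expect the main obstacle to be the uniformity of the last-crossing decomposition in the joint and big-excursion settings~(c). This is where I would be most careful, making sure that the restart law $\hm_{\B(N)}^{\B(eN)}$ and the excursions to $\widetilde A$ enter only through the mixing measure and never through the local kernel.
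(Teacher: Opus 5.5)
Your outline is correct and has the same skeleton as the paper's proof. You write $\phi(x,\cdot)|_{\partial A}$ as a mixture of simple random walk entrance laws from a circle around~$y_0$, with weights of total mass $\asymp\psi(x)$. You obtain \eqref{kernel_flat} by a last-exit decomposition through a circle of radius $\asymp R$ plus the conditional hitting estimate of \cite{P21} (the paper uses Theorem~3.20 there; the conditioning on $\tau_1(A)<\tau_1(\partial A')$ is what produces $\ln\frac Rr$). You obtain \eqref{hm_vs_hmrel} from Lemma~2.5 of \cite{CP17} and \eqref{expr_esc_disks}.

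The difference is in the Harnack/H\"older part. The paper decomposes at visits to $\partial\B(y_0,\rho_0r)$ that are followed by exits from the thin annulus~$\Lambda$. Its components are exit measures $H_\Lambda(v,\cdot)$, the mass bound comes from $H_\Lambda(v,\partial A)\geq c$, and it then cites Lemma~2.4 and Proposition~1.3 of \cite{CP17}.

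Your last-entrance decomposition into $\B(y_0,\rho)$, $\rho\asymp r$, gives an exact convex combination, and it yields an elementary route to \eqref{kernel_Harnack}. After one more decomposition at an intermediate circle (your entrance points are adjacent to the killing set), interior Harnack makes all the conditional entrance laws pointwise comparable. Since $\hm_A$ is itself a mixture of the same family (last entrance into $\B(y_0,\rho)$ of a walk coming from far away), each of these laws is comparable to~$\hm_A$. You should state this last observation explicitly, because Harnack in the starting point alone never produces $\hm_A(y)$.

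For \eqref{kernel_Holder}, however, $y\mapsto\IP_w[S_{\tau_A}=y]/\hm_A(y)$ is not itself harmonic. By time reversal it is a ratio of sums, over the outer neighbours of~$y$, of two positive harmonic functions vanishing on~$A$. So what you need is a boundary Harnack principle valid down to the lattice scale on the irregular boundary of a discrete disk. That is not an off-the-shelf textbook fact; it is exactly Proposition~1.3 of \cite{CP17}, which you should cite.

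Two smaller points:
\begin{itemize}
\item Theorem~3.20 of \cite{P21} needs $R/r$ large and starting points at distance $\asymp R$ from both $A$ and the killing circle. The paper arranges this by taking the last visit to $\partial A'$ and then the first crossing of $\partial\B(y_0,R/2)$. For bounded $R/r$, \eqref{kernel_flat} is trivial from \eqref{kernel_Harnack}.
\item In the final remark you also need \eqref{kernel_Harnack} to bound $\phi(x,y)/\hm_A(y)\leq C\psi(x)$. Only then does replacing $\hm_A$ by $\hm_A^{A'}$ cost just $O(\psi(x)/r)$.
\end{itemize}
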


\begin{proof}
Let $c_0=\min\{c_*,2\}$, $\rho_0=\frac{1+c_0}{2}$,
$\Lambda=\big(\B(y_0,c_0r)\setminus\B(y_0,r)\big)\cup\partial\B(y_0,r)$,
and let $H_\Lambda(v,\cdot)$ be the exit measure from~$\Lambda$ of
the SRW started at~$v$ (as in~(13) of~\cite{CP17}). Decomposing the
trajectory of~$X$ (started at~$x$) at its successive visits
to~$\partial\B(y_0,\rho_0r)$ that are followed by exits from~$\Lambda$
(note that $\|x-y_0\|>\rho_0r$, and that~$X$ is a SRW while inside
$\Lambda\subset A'$), we obtain that
$\phi(x,\cdot)|_{\partial A}=\sum_v\nu_x(v)H_\Lambda(v,\cdot)|_{\partial A}$
for a measure~$\nu_x$ on~$\partial\B(y_0,\rho_0r)$, whose total
mass is at most~$C\psi(x)$ since $H_\Lambda(v,\partial A)$ is bounded
away from~$0$ (by Lemma~3.11 of~\cite{P21}). Then~\eqref{kernel_Harnack}
and~\eqref{kernel_Holder} follow from Lemma~2.4 (together with the
observation made just after it) and Proposition~1.3 of~\cite{CP17},
used with~$r$ in place of~$n$ and $c=c_0$.

Let~$\gamma_0>2$ be a large enough constant.
If $R\leq\gamma_0r$, \eqref{kernel_flat} follows from~\eqref{kernel_Harnack}
(since then $\frac{r}{R}\ln\frac{R}{r}$ is bounded away from~$0$).
If $R>\gamma_0r$, let $D=\partial\B(y_0,R/2)$. After its last visit
to~$\partial A'$ before~$\tau_1(\A)$, the process~$X$ is a SRW
path in~$A'$ that has to cross~$D$; so the last-exit decomposition
shows that $\phi(x,\cdot)/\psi(x)$ is a convex combination of the
measures $\IP_z\big[S_{\tau_1(A)}\in\cdot\mid\tau_1(A)<\tau_1(\partial A')\big]$,
$z\in D$. For large enough~$\gamma_0$ and~$r$, Theorem~3.20 of~\cite{P21} applies to these
measures, with
$\Psi=\big(a(\dist(z,A))-\capa(A)\big)\vee1=\frac{2}{\pi}\ln\frac{R}{r}+O(1)$
by~(3.36) and~(3.66) of~\cite{P21}; this gives~\eqref{kernel_flat}.
Next, \eqref{hm_vs_hmrel} follows from Lemma~2.5 of~\cite{CP17}
and the definition~\eqref{df_hm_set_RW} of~$\hm_A^{A'}$.
Finally, by~\eqref{kernel_Harnack} and~\eqref{hm_vs_hmrel}, replacing~$\hm_A$
by~$\hm_A^{A'}$ in~\eqref{kernel_Holder} only produces an additional
term of order $\psi(x)/r$ in its right-hand side; for $y\neq z$, this term
is at most $C\psi(x)\big(\frac{\|y-z\|}{r}\big)^\beta$,
since $\|y-z\|\geq 1$ and $\beta\leq 1$.
\end{proof}

Now, let
\[
 \eps_{r,R}=\frac{r}{R}\ln\frac{R}{r}+\frac{1}{r}.
\]
The main result of this section is the following
\begin{lem}
\label{l_LD_SLT}
There exist positive constants $C_6,C_7,c_7$ (depending only
on~$c_*$) such that, for all $r\geq r_0$ and $k\geq 1$,
\begin{itemize}
 \item[(i)] the random variable~$M_k$ has the Gamma($k,1$) distribution;
 in particular, for all $t\in(0,k]$
 \begin{equation}
 \label{eq_LD_mass}
  \IP\big[|M_k-k|\geq t\big]\leq 2e^{-t^2/(4k)};
 \end{equation}
 \item[(ii)] for all $\theta\in(0,1]$
 \begin{equation}
 \label{eq_LD_SLT_rw}
 \IP\Big[\sup_{y\in\partial A}
 \Big|\frac{L_k(y)}{\hm_A^{A'}(y)}-M_k\Big|
 \geq \eps_{r,R}\Big(C_6\sqrt{k\big(1+\ln\tfrac{R}{r}\big)}+\theta k\Big)\Big]
 \leq C_7e^{-c_7\theta^2k}.
 \end{equation}
\end{itemize}
\end{lem}

\begin{proof}
Let $\psi_i=\psi\big(Z^{(i-1)}_{\text{end}}\big)$
and $\eta_i=\1{Z^{(i)}_{\text{st}}\in\partial A}$;
given~$\F_{i-1}$, $\eta_i$ is
Bernoulli($\psi_i$) and, by Proposition~\ref{p_SLT_facts}~(ii), independent of~$\xi_i$.

We first make the following observation. Let $(\zeta_i,i\geq 1)$ be
$\{0,1\}$-valued random variables such that~$\zeta_i$ is $\F_i$-measurable
and, given~$\F_{i-1}$, $\zeta_i$ is Bernoulli($c\psi_i$) (for a fixed
$c\in(0,1]$) and independent of~$\xi_i$. Let~$\tau$ be an a.s.\ finite
stopping time (with respect to~$(\F_i)$), and, for $j\geq 1$, let~$\tau^{(j)}$
be the $j$th index $i>\tau$ such that $\zeta_i=1$; assume that
$\tau^{(j)}<\infty$ a.s. Then
\begin{equation}
\label{eq_Gamma_mass}
 \text{conditionally on~$\F_\tau$, }\sum_{i=\tau+1}^{\tau^{(j)}}\xi_i\psi_i
 \text{ has the Gamma($j,c$) distribution}
\end{equation}
(i.e., that of the sum of~$j$ independent Exponential($c$) random variables).
Indeed, fix $s\geq 0$. Since
$\IE\big(e^{-s\xi_i\psi_i}\mid\F_{i-1}\big)=(1+s\psi_i)^{-1}$ and
$\IE\big((1+\frac{s}{c})^{\zeta_i}\mid\F_{i-1}\big)=1+s\psi_i$, the process
$\exp\big(-s\sum_{i=\tau+1}^t\xi_i\psi_i\big)
(1+\frac{s}{c})^{\sum_{i=\tau+1}^t\zeta_i}$, $t\geq\tau$,
is a martingale, which is bounded by~$(1+\frac{s}{c})^j$ up to
time~$\tau^{(j)}$. Since $\sum_{i=\tau+1}^{\tau^{(j)}}\zeta_i=j$, the optional
stopping theorem gives
$\IE\big(\exp\big(-s\sum_{i=\tau+1}^{\tau^{(j)}}\xi_i\psi_i\big)\mid\F_\tau\big)
=(1+\frac{s}{c})^{-j}$ for all $s\geq 0$, and~\eqref{eq_Gamma_mass} follows.

\smallskip
\noindent
\emph{Proof of~(i).} Since $T_k<\infty$ a.s.\ is the $k$th index~$i$
with $\eta_i=1$, \eqref{eq_Gamma_mass} (used with $\zeta_i=\eta_i$, $c=1$,
$\tau=0$, and $j=k$) shows that~$M_k$ has the Gamma($k,1$) distribution;
then, \eqref{eq_LD_mass} is the usual large deviation bound
for this distribution.

\smallskip
\noindent
\emph{Proof of~(ii).} We follow the strategy of the proof of
Lemma~2.9 of~\cite{CP17} (split the sequence of excursions into
independent blocks, and apply empirical processes' tools to the
soft local times of these blocks), so we mainly indicate
what has to be modified.

By~\eqref{kernel_Harnack} and~\eqref{hm_vs_hmrel}, there is
$\lambda>0$ such that $\phi(x,y)\geq\lambda \psi(x)\hm_A^{A'}(y)$
for all~$x$ and $y\in\partial A$. When the $i$th excursion starts
at $y\in\partial A$, we toss an independent coin~$\chi_i$ with
success probability
$\lambda \psi_i\hm_A^{A'}(y)/\phi(Z^{(i-1)}_{\text{end}},y)$
(and set $\chi_i=0$ when $\eta_i=0$); we include $\chi_1,\ldots,\chi_i$ in~$\F_i$ (clearly,
Proposition~\ref{p_SLT_facts}~(ii) remains valid). Then, given~$\F_{i-1}$ and
$\{\eta_i=1\}$, $\chi_i$ is Bernoulli($\lambda$) and, on
$\{\chi_i=1\}$, $Z^{(i)}_{\text{st}}$ has the law~$\hm_A^{A'}$;
in particular, given~$\F_{i-1}$, $\chi_i$ is Bernoulli($\lambda\psi_i$)
and independent of~$\xi_i$.
Let $t_1<t_2<\ldots$ be the indices with $\chi_i=1$, $t_0=0$,
and, for $b\geq 0$,
\[
 W_b(y)=\sum_{i=t_b+1}^{t_{b+1}}\xi_i\,
    \phi\big(Z^{(i-1)}_{\text{end}},y\big),\ y\in\partial A,
 \qquad
 \mu_b=\sum_{y\in\partial A}W_b(y)=\sum_{i=t_b+1}^{t_{b+1}}\xi_i\psi_i.
\]
Then $W_1,W_2,\ldots$ are i.i.d.\ (and independent of~$W_0$).
Also, since each $A\to\partial A'$ excursion is a renewal with
probability~$\lambda$, independently of the past, the renewals a.s.\
occur infinitely often; so, \eqref{eq_Gamma_mass} (used with
$\zeta_i=\chi_i$, $c=\lambda$, $j=1$, and $\tau=t_b$ or $\tau=T_k$)
implies that~$\mu_b$ has the Exponential($\lambda$) distribution
for all $b\geq 0$, and that, conditionally on~$\F_{T_k}$, so does
the total mass~$\mu'_k$ of the soft local time accumulated between~$T_k$
and the next renewal.

For $y\in\partial A$ and $w\in\R_+^{\partial A}$ let
$f_y(w)=\frac{w(y)}{\hm_A^{A'}(y)}-\sum_{z}w(z)$.
We have $\IE f_y(W_1)=0$: by Proposition~\ref{p_SLT_facts}~(ii),
$\IE W_1(y)$ and $\IE\mu_1$ are the expected numbers of
$A\to\partial A'$ excursions in a block that start at~$y$ and
anywhere, respectively, and, by the renewal-reward theorem, their
ratio is the stationary entrance law of these excursions, which
is~$\hm_A^{A'}$ by~\eqref{df_psi} or Lemma~\ref{l_restriction_hm_indep}.
Also, by~\eqref{kernel_flat} and~\eqref{hm_vs_hmrel},
$|f_y(w)|\leq C\eps_{r,R}\sum_zw(z)$ for all~$w$ of the form
$\sum_js_j\phi(x_j,\cdot)$, $s_j\geq0$. Then, denoting
\[
 S_m(y)=\sum_{b=1}^m f_y(W_b),
\]
we obtain (note that at most~$k$ renewals occur before~$T_k$)
\begin{equation}
\label{SLT_decomp}
 \sup_{y\in\partial A}\Big|\frac{L_k(y)}{\hm_A^{A'}(y)}-M_k\Big|
 \leq C\eps_{r,R}(\mu_0+\mu'_k)
  + \max_{m\leq k}\sup_{y}|S_m(y)|.
\end{equation}
Since~$\mu_0$ and~$\mu'_k$ are Exponential($\lambda$) (the latter
conditionally on~$\F_{T_k}$), the first term in the right-hand side
exceeds $\frac{1}{2}\eps_{r,R}\theta k$ with probability at
most $2e^{-c\theta k}$.

It remains to deal with the second term. Abbreviate
$\Delta=1+\ln\frac{R}{r}$, and let $h_y=\eps_{r,R}^{-1}f_y$
(recall that $\IE f_y(W_1)=0$). By the above,
$F(w)=C\sum_zw(z)$ is an envelope for
$\{h_y,y\in\partial A\}$, and, by~\eqref{kernel_Holder}
with~$\hm_A^{A'}$ in place of~$\hm_A$ (see Lemma~\ref{l_kernel_SLT}),
$|h_z-h_{z'}|(W_1)\leq\frac{C}{\eps_{r,R}}\big(\frac{\rho}{r}\big)^\beta
\mu_1$ for $\|z-z'\|\leq\rho$. So, as in Step~3 of the
proof of Lemma~2.9 of~\cite{CP17}, the bracketing numbers satisfy
$N_{[\,]}(s\|F\|_2,\{h_y\},\|\cdot\|_2)\leq C(s\eps_{r,R})^{-1/\beta}$,
and the bracketing entropy integral is at most
$C(1+\ln(1/\eps_{r,R}))^{1/2}\leq C\Delta^{1/2}$. Then (1.2)
of~\cite{vdVW11} gives
\begin{equation}
\label{E_sup_S}
 \IE\sup_{y\in\partial A}\Big|\sum_{b=1}^mh_y(W_b)\Big|
 \leq C\sqrt{m\Delta}
 \quad\text{for all }m\geq 1.
\end{equation}
For the deviations, we use truncation together with Bousquet's
inequality~\cite{B02}; this permits us to avoid an additional term
of the form $e^{-c\theta k/\ln k}$ that comes from Adamczak's
inequality in~(51) of~\cite{CP17}. Let $u=C'\ln\frac{2}{\theta}$
with~$C'$ large enough so that
$\IE F(W_1)\1{F(W_1)>u}\leq\frac{\theta}{8}$. The contribution
of $h_y\1{F>u}$ is bounded by
$\sum_{b\leq m}\big(F(W_b)\1{F(W_b)>u}-\IE F(W_1)\1{F(W_1)>u}\big)
+\frac{\theta m}{4}$, and the Bernstein inequality shows that the
sum exceeds $\frac{\theta m}{8}$ with probability at most
$e^{-c\theta^2m}$. The centred functions
$\tilde h_y=h_y\1{F\leq u}-\IE h_y\1{F\leq u}(W_1)$ are bounded
by~$2u$, have variances at most~$C$, and satisfy the analogue
of~\eqref{E_sup_S} (uniformly in~$u$); so Bousquet's inequality
(applied to $\{\pm\tilde h_y\}$ with the deviation parameter
$v=c\theta^2m$) shows that, with probability at least $1-e^{-c\theta^2m}$,
$\sup_y|\sum_{b\leq m}\tilde h_y(W_b)|\leq 2C\sqrt{m\Delta}+\frac{\theta m}{8}$
(for small~$c$, since $\theta\ln\frac{2}{\theta}\leq 1$). Hence
\[
 \IP\Big[\sup_{y\in\partial A}\Big|\sum_{b=1}^mh_y(W_b)\Big|
 \geq C\sqrt{m\Delta}+\frac{\theta m}{2}\Big]\leq 2e^{-c\theta^2m},
\]
and the same bound (with modified constants) holds for the maximum
over $m\leq k$, by a standard stopping argument
(cf.\ the proof of~(40) of~\cite{CP17}): indeed, by~\eqref{E_sup_S}
and the Markov inequality,
$\sup_{y}\big|\sum_{b=m+1}^kh_y(W_b)\big|\leq 2C\sqrt{k\Delta}$
with probability at least~$\frac{1}{2}$, independently
of $W_1,\ldots,W_m$.
Since $S_m(y)=\eps_{r,R}\sum_{b\leq m}h_y(W_b)$, this concludes the
proof of Lemma~\ref{l_LD_SLT}.
\end{proof}

\begin{cor}
\label{c_SLT_domination}
Let $(\tilde\xi_j,j\geq 1)$ and $(\tilde Z^{(j)},j\geq 1)$ be
obtained from the same marked Poisson process
restricted to $\partial A\times\R_+$ by the soft local times
construction with the kernel~$\hm_A^{A'}(\cdot)$, which does not depend
on the endpoint of the previous excursion; we call it
the \emph{free} soft local times construction on~$\partial A$
(the $\tilde Z^{(j)}$'s are then i.i.d.\ $A\to\partial A'$ excursions
with initial law~$\hm_A^{A'}$). Also, let $Z^{A,(1)},Z^{A,(2)},\ldots$ be the
$A\to\partial A'$ excursions of the sequence~$(Z^{(i)})$. There exist
positive constants $C_8,C_9,c_9$ (depending only on~$c_*$) such that,
for all $r\geq r_0$, $k\geq 1$, and
$t\in\big[C_8\sqrt{k(1+\ln\frac{R}{r})},k\big]$,
\begin{align}
\lefteqn{
\IP\Big[\big\{\tilde Z^{(1)},\ldots,\tilde Z^{(\lfloor k-t\rfloor)}\big\}
 \subset\big\{Z^{A,(1)},\ldots,Z^{A,(k)}\big\}
 \subset\big\{\tilde Z^{(1)},\ldots,\tilde Z^{(\lceil k+t\rceil)}\big\}\Big]}
 \phantom{*********************************}
 \nonumber \\
 &\geq 1-C_9e^{-c_9t^2/k}.
\label{eq_SLT_domination}
\end{align}
Moreover, assume that $\widetilde A=\bigcup_{j=1}^JA_j$ and
$\widetilde A'=\bigcup_{j=1}^JA'_j$, where $(A_j,A'_j)$ are pairs
of nested disks satisfying the same assumptions as~$(A,A')$, with
pairwise disjoint~$A'_j$'s (and
with~$\partial\widetilde A'$ understood as $\bigcup_{j}\partial A'_j$).
Then~\eqref{eq_SLT_domination} also holds for each~$(A_j,A'_j)$
in place of~$(A,A')$, and the corresponding $J+1$ sequences
of independent excursions are independent.
\end{cor}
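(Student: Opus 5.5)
The plan is to work inside a single realisation of the marked Poisson process on $\partial A\times\R_+$ and to read the inclusions in~\eqref{eq_SLT_domination} from property~(iii) of Proposition~\ref{p_SLT_facts}.

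For the joint construction, the $A\to\partial A'$ excursions $Z^{A,(1)},\ldots,Z^{A,(k)}$ are exactly the marks of the Poisson points on $\partial A\times\R_+$ lying under the graph of $L_k$. Points on $\partial\widetilde A\times\R_+$ are irrelevant here.

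For the free construction with kernel $\hm_A^{A'}$, the soft local time after $j$ steps is $\tilde M_j\,\hm_A^{A'}(\cdot)$, where $\tilde M_j=\tilde\xi_1+\cdots+\tilde\xi_j$ is Gamma($j,1$). The points under this graph are exactly $\tilde Z^{(1)},\ldots,\tilde Z^{(j)}$.

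So it suffices to show that, with the required probability,
\[
\tilde M_{\lfloor k-t\rfloor}\,\hm_A^{A'}(y)\leq L_k(y)\leq \tilde M_{\lceil k+t\rceil}\,\hm_A^{A'}(y)\qquad\text{for all }y\in\partial A .
\]
Both sides are soft local times driven by the same Poisson process, so under-graph inclusion gives inclusion of the chosen marks. Ties occur with probability zero.

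To get this sandwich, I will combine the three estimates below.
\begin{itemize}
\item By Lemma~\ref{l_LD_SLT}~(ii), applied with $\theta$ a small constant multiple of $t/k$ (this lies in $(0,1]$), we have $\sup_y|L_k(y)/\hm_A^{A'}(y)-M_k|\leq \eps_{r,R}\big(C_6\sqrt{k(1+\ln\frac Rr)}+\theta k\big)$ outside an event of probability $C_7e^{-c_7\theta^2k}=Ce^{-ct^2/k}$.
\item Since $\eps_{r,R}$ is bounded (by a constant depending on~$c_*$) and $t\geq C_8\sqrt{k(1+\ln\frac Rr)}$, choosing $C_8$ large makes this error at most $t/4$.
\item By~\eqref{eq_LD_mass}, $|M_k-k|\leq t/4$ with probability at least $1-2e^{-t^2/(64k)}$. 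Likewise $\tilde M_{\lfloor k-t\rfloor}\leq k-t/2$ and $\tilde M_{\lceil k+t\rceil}\geq k+t/2$ with probability at least $1-Ce^{-ct^2/k}$, by Gamma tail bounds (if $k-t<1$, the left inclusion is trivial).
\end{itemize}
On the intersection of these events, $L_k/\hm_A^{A'}$ lies in $[k-t/2,k+t/2]$, which yields the sandwich. A small nuisance is that $\eps_{r,R}$ need not be small; if needed, I will use only that it is at most a constant $C(c_*)$ and absorb that constant into $C_8$ and the choice of~$\theta$.

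For the multi-pair statement, the free constructions for the different pairs $(A_j,A'_j)$ use the restrictions of one Poisson process to the disjoint sets $\partial A_j\times\R_+$. Those restrictions are independent Poisson processes with independent marks, so the resulting i.i.d.\ sequences $\tilde Z$ are independent across $j$. Each pair is in the same situation as $(A,A')$, with the roles of $A$ and $\widetilde A$ permuted in the joint construction. Lemma~\ref{l_LD_SLT} is stated for the joint construction directly and depends only on the pair considered, so the bound holds for each pair separately.

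The main obstacle is justifying that the mark sets coincide exactly with the points under each graph. In particular, the free construction must select points in increasing order of the free clock, so that $\{\tilde Z^{(1)},\ldots,\tilde Z^{(j)}\}$ are exactly the points below $\tilde M_j\hm_A^{A'}$. This holds by construction: the graph is a scaled deterministic profile, and points enter it one at a time. The rest is bookkeeping of constants.
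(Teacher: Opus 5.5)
Your proposal is correct and follows essentially the same route as the paper. You reduce, via Proposition~\ref{p_SLT_facts}~(iii), to the sandwich $\tilde M_{\lfloor k-t\rfloor}\hm_A^{A'}\le L_k\le\tilde M_{\lceil k+t\rceil}\hm_A^{A'}$ on $\partial A$; you obtain it from Lemma~\ref{l_LD_SLT} with $\theta$ of order $t/k$ plus Gamma tail bounds (the paper takes $\theta=t/(8k)$ and uses $\eps_{r,R}\le 1$); and you get the multi-pair independence from the disjointness of the sets $\partial A_j\times\R_+$.
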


\begin{proof}
By Proposition~\ref{p_SLT_facts}~(iii), the excursions $Z^{A,(1)},\ldots,Z^{A,(k)}$ are the marks
of the Poisson points below the graph of~$L_k$, and
$\tilde Z^{(1)},\ldots,\tilde Z^{(j)}$ are the marks of those below
the graph of $(\tilde\xi_1+\cdots+\tilde\xi_j)\hm_A^{A'}(\cdot)$.
So (cf.\ (5.8)--(5.9) of~\cite{P21}) it is enough to note that, by
Lemma~\ref{l_LD_SLT} (with $\theta=\frac{t}{8k}$; recall that
$\eps_{r,R}\leq 1$) and the usual
large deviation bound for sums of i.i.d.\ Exponentials, with
probability at least $1-C_9e^{-c_9t^2/k}$ we have
$\tilde\xi_1+\cdots+\tilde\xi_{\lfloor k-t\rfloor}\leq k-\frac{t}{2}
\leq L_k(y)/\hm_A^{A'}(y)\leq k+\frac{t}{2}\leq
\tilde\xi_1+\cdots+\tilde\xi_{\lceil k+t\rceil}$ for all $y\in\partial A$.
The last statement holds because the above construction does not
depend on which of the pairs is the pair of interest, and the
independent excursions are
constructed from the restrictions of the Poisson process to the
disjoint sets $\partial A\times\R_+$ and $\partial A_j\times\R_+$,
$j=1,\ldots,J$.
\end{proof}

Compared to Lemma~2.9 of~\cite{CP17}, Lemma~\ref{l_LD_SLT} applies
to the joint construction, holds for all $\theta\in(0,1]$ without an
additional term of the form $e^{-c\theta k/\ln k}$, and takes into
account that the entrance measure to~$A$ is already ``almost flat''
when~$R/r$ is large (the logarithmic factor in~\eqref{kernel_flat} is
due to the conditioning, and cannot be removed in general, cf.\
Exercise~3.36 of~\cite{P21}). In this paper, we only need it with
$R=er$, in combination with the excursion count control of
Section~\ref{s_exc_control}: first, we show (e.g.\ with
Lemma~\ref{l_exc_count_one_pair}) that the number of smaller excursions
generated by the larger ones exceeds (or does not exceed) some~$k$,
and then use Corollary~\ref{c_SLT_domination} to compare these
smaller excursions with independent ones. For one pair of nested
disks, this combination can be stated in the following way.
\begin{cor}
\label{c_count_SLT}
Let $B=\B(x,r)$ and $B'=\B(x,er)$ be such that
$B'\subset\B(N)\setminus\partial\B(N)$. Consider~$v$ independent
$N\to eN$ excursions started at~$\hm_{\B(N)}^{\B(eN)}$, let~$\ZZ$
be the set of all $B\to\partial B'$ excursions generated by them,
and let~$\widetilde\ZZ(j)$ be the set of the first~$\lceil j\rceil$
independent excursions of the free soft local times construction
on~$\partial B$ (as in Corollary~\ref{c_SLT_domination}, with $A=B$
and $A'=B'$). There exist universal positive constants
$r_1,C_{10},C_{11},c_{11}$ such that, for all $r\geq r_1$ and all~$t$
such that $C_{10}\big(\sqrt{v}+\frac{v}{r}\big)\leq t\leq v$,
\begin{equation}
\label{eq_count_SLT}
 \IP\big[\widetilde\ZZ(v-t)\subset\ZZ\subset\widetilde\ZZ(v+t)\big]
 \geq 1-C_{11}\exp\Big(-c_{11}\frac{t^2}{v\ln\frac{eN}{r}}\Big).
\end{equation}
Moreover, if $(B_j,B'_j)$, $j=1,\ldots,J$, are pairs of nested disks
as above with pairwise disjoint~$B'_j$'s, then~\eqref{eq_count_SLT}
holds for each of them (with the joint construction of
Corollary~\ref{c_SLT_domination}), and the corresponding~$J$ sequences
of independent excursions are independent.
\end{cor}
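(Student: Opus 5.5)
The plan is to combine Lemma~\ref{l_exc_count_one_pair}, which controls the number of $B\to\partial B'$ excursions generated by the big excursions, with Corollary~\ref{c_SLT_domination}, which compares these excursions with the free ones, using a union bound. Let $\zeta_j$ be the number of $B\to\partial B'$ excursions generated by the $j$th big excursion, and let $K:=\zeta_1+\cdots+\zeta_v=|\ZZ|$. Set $X$ in case~(c) with $N$ as the big radius, $A=B$, and $A'=B'$. Then $R/r=e$, so $c_*=e$, $\ln\frac{R}{r}=1$, and $\eps_{r,R}$ is bounded. The starting point $x_0$ of the first big excursion lies on $\partial\B(N)$, which is outside $B'$ as required.

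\textbf{Step 1: count control.} By Lemma~\ref{l_exc_count_one_pair}, $\IE\zeta_1=1+O(r^{-1})$, so $|v\IE\zeta_1-v|\leq C v/r\leq t/4$ once $C_{10}$ is large. Applying \eqref{eq_exc_count_one_pair} with $\delta=\frac{t}{4v}\leq 1$, so that $\delta\wedge\delta^2=\delta^2$, gives
\[
|K-v|\leq \tfrac{t}{2}
\quad\text{with probability at least}\quad
1-2\exp\Big(-\frac{ct^2}{v\ln\frac{eN}{r}}\Big).
\]

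\textbf{Step 2: soft local times comparison.} Apply Corollary~\ref{c_SLT_domination} with $k$ ranging over integers near $v$ and with $t/2$ in place of $t$. The admissible range $t/2\geq C_8\sqrt{2k}$ holds because $t\geq C_{10}\sqrt v$ and $k\leq 2v$. The outcome is that, with probability at least $1-C_9e^{-ct^2/v}$,
\[
\widetilde\ZZ(k-t/2)\subset\{Z^{B,(1)},\ldots,Z^{B,(k)}\}\subset\widetilde\ZZ(k+t/2).
\]
To avoid a union bound over all $k$, I will use the nested structure: the left inclusion is monotone in $k$, so it only needs to be checked at $k=\lceil v-t/2\rceil$, and similarly the right inclusion only at $k=\lfloor v+t/2\rfloor$. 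This gives two applications of the corollary. On the intersection with the event of Step~1, we have $\ZZ=\{Z^{B,(1)},\ldots,Z^{B,(K)}\}$ and $v-t/2\leq K\leq v+t/2$, which yields
\[
\widetilde\ZZ(v-t)\subset\ZZ\subset\widetilde\ZZ(v+t),
\]
after adjusting the rounding. The two error bounds combine to the bound in \eqref{eq_count_SLT}, since $e^{-ct^2/v}\leq e^{-ct^2/(v\ln\frac{eN}{r})}$.

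\textbf{Step 3: several disks.} For several disjoint pairs, use the joint construction of Corollary~\ref{c_SLT_domination} with $\widetilde A=\bigcup_{i\ne j}B_i$. Steps~1--2 then apply to each pair $(B_j,B'_j)$, and the independence of the $J$ families of free excursions is exactly the last statement of that corollary.

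The main point needing care is Step~2. The number $K$ of excursions is random, whereas Corollary~\ref{c_SLT_domination} is stated for a fixed $k$. The monotonicity trick above handles this, but one must make sure it is legitimate: the excursions $Z^{B,(i)}$ are a single fixed sequence, independent of where the big excursions stop, and $K$ is determined by that same sequence. This is what lets the fixed-$k$ statements be evaluated at the random index $K$.
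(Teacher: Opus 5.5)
Your proposal follows the paper's proof: Lemma~\ref{l_exc_count_one_pair} with $\delta=\frac{t}{4v}$ gives $v-\frac{t}{2}\leq|\ZZ|\leq v+\frac{t}{2}$, then Corollary~\ref{c_SLT_domination} is applied twice, at $k=\lceil v-\frac{t}{2}\rceil$ and $k=\lfloor v+\frac{t}{2}\rfloor$, and its last part handles several disks. The one detail to fix is the rounding in the left inclusion: with $\frac{t}{2}$ in place of $t$ you only get $\lfloor k-\frac{t}{2}\rfloor\geq\lfloor v-t\rfloor$, which can equal $\lceil v-t\rceil-1$, so use $\frac{t}{3}$ as the paper does; then $t\geq C_{10}\geq 12$ gives $\lfloor k-\frac{t}{3}\rfloor\geq\lceil v-t\rceil$.
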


\begin{proof}
Let~$\zeta$ be the cardinality of~$\ZZ$. By Lemma~\ref{l_exc_count_one_pair}
(with~$N$ in place of~$n$, and $\delta=\frac{t}{4v}$), with probability at least
$1-2\exp\big(-c\frac{t^2}{v\ln(eN/r)}\big)$ we have
$|\zeta-v\IE\zeta_1|\leq\frac{t}{4}$, and hence $|\zeta-v|\leq\frac{t}{2}$
(since $\IE\zeta_1=1+O(r^{-1})$ and $t\geq C_{10}v/r$, with
large enough~$C_{10}$). Since the elements of~$\ZZ$ are the
first~$\zeta$ excursions of the sequence $Z^{B,(1)},Z^{B,(2)},\ldots$
of Corollary~\ref{c_SLT_domination}, on that event we have
\[
 \big\{Z^{B,(1)},\ldots,Z^{B,(\lceil v-t/2\rceil)}\big\}
 \subset\ZZ\subset
 \big\{Z^{B,(1)},\ldots,Z^{B,(\lfloor v+t/2\rfloor)}\big\}.
\]
We then apply Corollary~\ref{c_SLT_domination} (with $R=er$) twice,
with $k=\lceil v-\frac{t}{2}\rceil$ and $k=\lfloor v+\frac{t}{2}\rfloor$,
and with~$\frac{t}{3}$ in place of~$t$ in both cases; note that
$\frac{t}{3}\in\big[C_8\sqrt{2k},k\big]$ if~$C_{10}$ is large enough, and that
$\lfloor k-\frac{t}{3}\rfloor\geq\lceil v-t\rceil$ (respectively,
$\lceil k+\frac{t}{3}\rceil\leq\lceil v+t\rceil$), since $t\geq C_{10}\geq 12$.
The last statement follows from the last part of
Corollary~\ref{c_SLT_domination}, since the excursion counts do not
depend on the construction.
\end{proof}

\section{Proofs of the main results}
\label{s_proofs_main}
In the proofs below, we repeatedly use the scheme described
at the end of Section~\ref{s_exc_SLT}: there are independent excursions
between the boundaries of a pair of nested disks
(usually, but not always, $\B(n)$ and $\B(en)$), which we call
``big excursions'', and a few smaller pairs of nested
disks~$(B_i,B'_i)$ are placed into the inner disk.
We adopt the following notational convention:
we denote by~$\ZZ_i$ the set of all $B_i\to\partial B'_i$
excursions generated by the big excursions (we call them
\emph{real} excursions, as opposed to the independent ones
introduced below), and by~$\ZZ_i(k)$
the set of the first~$\lfloor k\rfloor$ of them. Also,
$\widetilde\ZZ_i(k)$ stands for the set of the
first~$\lceil k\rceil$ i.i.d.\ $B_i\to\partial B'_i$
excursions with initial law $\hm_{B_i}^{B'_i}$, obtained by the
free soft local times construction on~$\partial B_i$ (from the same
Poisson process), as in
Corollary~\ref{c_SLT_domination}; we refer to these as
the independent excursions of the free soft local times
construction. Recall that, by Corollary~\ref{c_SLT_domination},
these independent excursions are independent for different~$i$.

\begin{proof}[Proof of Theorem~\ref{t_exc_noncovering}.]
In the following, we work with increasing events;
the case of decreasing events is analogous, and we indicate
the necessary modifications at the end of the proofs of the lower
and of the upper bounds.
To prove the lower bound~\eqref{eq_exc_noncovering_lower},
recall that $g_n\leq (1-\eps_1)k'_n$.
Each big (i.e., $n\to en$) excursion
generates a random number
of $n\to e^{1/2}n$ excursions (``small excursions''),
which has approximately Geometric($1/2$)
distribution, see Figure~\ref{f_generate_exc}.
\begin{figure}
\begin{center}
\includegraphics{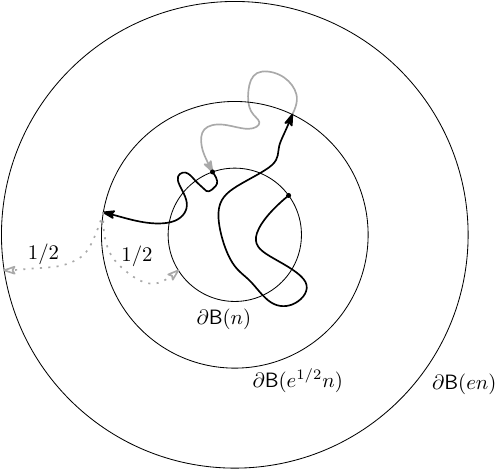}
\caption{Generating new $n\to e^{1/2}n$
excursions.}
\label{f_generate_exc}
\end{center}
\end{figure}
Since~$\B(n)$ can only be visited during small excursions,
the local times on~$\B(n)$ are the sums of the local times
generated by the small excursions. The idea is to split
the sequence of small excursions into i.i.d.\ blocks,
in such a way that the number of big excursions
affects the \emph{number} of blocks, but not their contents;
then, to make the (increasing) event~$E^{\B(n)}$ occur
with just $k'_n-g_n$ big excursions, it is enough to pay
for having an atypically large number of blocks.
This is reminiscent of the approach of~\cite{BGP18}
(see Section~3.1 there), where the regeneration times of
a Markov chain are determined by an independent sequence
of coin tosses, and the regeneration blocks of size one
are constructed after all the others.

Abbreviate $\mu_0:=\hm_{\B(n)}^{\B(en)}$, and, for
$y\in\partial\B(e^{1/2}n)$ and $z\in\partial\B(n)$, let
\[
 q(y)=\IP_y\big[\tau_{\partial\B(en)}<\tau_{\B(n)}\big],
 \qquad
 K(y,z)=\IP_y\big[S_{\tau_{\B(n)}}=z \mid
   \tau_{\B(n)}<\tau_{\partial\B(en)}\big].
\]
There exist $C_0>0$ and $p\in(0,1)$ (not depending on~$n$)
such that
\begin{equation}
\label{eq_q_half}
 \big|q(y)-\tfrac{1}{2}\big| \leq \frac{C_0}{n}
 \qquad \text{and} \qquad
 K(y,z)\geq p\,\mu_0(z)
\end{equation}
for all $y\in\partial\B(e^{1/2}n)$ and $z\in\partial\B(n)$.
Indeed, the first inequality follows from Lemma~3.11
of~\cite{P21}, and the second one from the Harnack inequality
(see e.g.\ Chapter~6 of~\cite{LL10}) and~\eqref{hm_vs_hmrel}.

Fix $\sigma\in\big(\frac{2+p}{4},\frac{1+p}{2}\big)$.
Since $q(y)+p(1-q(y))\geq\frac{1+p}{2}-\frac{C_0}{n}$
by~\eqref{eq_q_half}, for all large enough~$n$ we have
$q(y)<\sigma<q(y)+p(1-q(y))$ for all $y\in\partial\B(e^{1/2}n)$,
so (again by~\eqref{eq_q_half})
\[
 \nu_y(\cdot)=\frac{(1-q(y))K(y,\cdot)-(\sigma-q(y))\mu_0(\cdot)}{1-\sigma}
\]
is a probability measure on~$\partial\B(n)$.
Consider a sequence of independent big excursions, and let
$\gs_1,\gs_2,\gs_3,\ldots$ be the small excursions generated
by them, enumerated consecutively (first those generated
by the first big excursion, then those generated by the
second one, and so on); let $y_i\in\partial\B(e^{1/2}n)$
be the endpoint of~$\gs_i$.
By the strong Markov property, given $\gs_1,\ldots,\gs_i$,
the big excursion that generated~$\gs_i$ ends with
probability~$q(y_i)$ (and then~$\gs_{i+1}$, the first small
excursion of the next big excursion, starts from~$\mu_0$),
and otherwise~$\gs_{i+1}$ starts from~$K(y_i,\cdot)$;
given its starting point, $\gs_{i+1}$ is a simple random walk
stopped upon hitting~$\partial\B(e^{1/2}n)$.
We realize these transitions in the following way: after~$\gs_i$
is completed, we toss an independent coin which comes up heads
with probability~$\sigma$;
\begin{itemize}
 \item if it comes up tails, the current big excursion continues,
 and the starting point of~$\gs_{i+1}$ is chosen according
 to~$\nu_{y_i}$;
 \item if it comes up heads, the starting point of~$\gs_{i+1}$
 is chosen according to~$\mu_0$, and, independently, the current
 big excursion ends with probability~$q(y_i)/\sigma$
 (and continues otherwise).
\end{itemize}
This does not change the law of $(\gs_i)_{i\geq 1}$ together with
the information on which small excursions are the last ones of
their big excursions: indeed, the current big excursion ends with
probability $\sigma\cdot\frac{q(y_i)}{\sigma}=q(y_i)$, and the
probability that it continues and~$\gs_{i+1}$ starts at
$z\in\partial\B(n)$ is
$(1-\sigma)\nu_{y_i}(z)+(\sigma-q(y_i))\mu_0(z)=(1-q(y_i))K(y_i,z)$.

Let us call~$i$ a \emph{regeneration index} if $i=1$ or the coin
tossed after~$\gs_{i-1}$ came up heads, and let the $b$th
\emph{block} consist of the small excursions with indices
between the $b$th (included) and the $(b+1)$th (excluded)
regeneration indices. Let $\ell^{(b)}\in\Z_+^{\B(n)}$ be
the field of local times on~$\B(n)$ generated by the small
excursions of the $b$th block, and let~$Y_b$ be the endpoint
of the last of them. Each block starts afresh from~$\mu_0$,
whether or not the preceding block was the last one of a big
excursion; so, the blocks are i.i.d., and we can (and will)
make the decisions at the ends of the blocks using a sequence
$(U_b)_{b\geq 1}$ of i.i.d.\ Uniform$[0,1]$ random variables,
independent of the blocks: the $b$th block ends the current
big excursion if $U_b\leq q(Y_b)/\sigma$.
Denote $\Phi(b)=\ell^{(1)}+\cdots+\ell^{(b)}$ (note that $\Phi(b)$
is nondecreasing in~$b$), and let~$V_v$ be the index of the block
that ends the $v$th big excursion; then, the local times
on~$\B(n)$ generated by~$v$ big excursions are given by~$\Phi(V_v)$.

Next, let $q_\pm=\frac{1}{2}\pm\frac{C_0}{n}$ (note that $q_+<\sigma$
for large~$n$), and let~$V^\pm_v$ be the $v$th index~$b$ such that
$U_b\leq q_\pm/\sigma$. Since $q_-\leq q(Y_b)\leq q_+$
by~\eqref{eq_q_half}, we have
\[
 V^+_v\leq V_v\leq V^-_v \qquad\text{for all }v\geq 1.
\]
Note that, while~$V_v$ depends on the blocks (through the~$Y_b$'s),
$(V^\pm_v)_{v\geq 1}$ are independent of $(\ell^{(b)})_{b\geq 1}$,
and~$V^\pm_v$ has the negative binomial distribution
\begin{equation}
\label{eq_B_r_negbin}
 \IP\big[V^\pm_v=b\big]=\binom{b-1}{v-1}\Big(\frac{q_\pm}{\sigma}\Big)^v
 \Big(1-\frac{q_\pm}{\sigma}\Big)^{b-v},
 \qquad b\geq v;
\end{equation}
in particular, the expectation of~$V^\pm_v$ is
$2\sigma v\big(1+O(n^{-1})\big)$, and its variance is at most~$4v$.

Let us fix $b^*=\big\lceil 2\sigma k'_n+11\sqrt{k'_n}\,\big\rceil$.
Recall that Condition~L implies that $k'_n\to\infty$ and
$k'_n/n=o\big(\sqrt{k'_n}\big)$. Using the definition of~$k'_n$,
the inequality $V_{k'_n}\leq V^-_{k'_n}$, and the Chebyshev inequality
for~$V^-_{k'_n}$, we obtain that, for all large enough~$n$,
\begin{equation}
\label{eq_Phi_bstar}
 \IP\big[\Phi(b^*)\in E^{\B(n)}\big]
 \geq \IP\big[\Phi(V_{k'_n})\in E^{\B(n)}\big]
  -\IP\big[V^-_{k'_n}>b^*\big]
 \geq \frac{4}{5}
\end{equation}
(here, we also use that~$E^{\B(n)}$ is increasing and $\Phi(\cdot)$
is nondecreasing).

Now, let $v=k'_n-g_n$. On the event
$\{\Phi(b^*)\in E^{\B(n)}\}\cap\{V^+_v\geq b^*\}$ we have
$V_v\geq b^*$, so $\Phi(V_v)\geq \Phi(b^*)$, and therefore
$E^{\B(n)}$ occurs. Since these two events are independent,
\eqref{eq_Phi_bstar} implies that
\begin{equation}
\label{eq_lower_via_Br}
 \IP^{(k'_n-g_n)}\big[E^{\B(n)}\big]
 \geq \frac{4}{5}\,\IP\big[V^+_v\geq b^*\big].
\end{equation}

It remains to bound $\IP[V^+_v\geq b^*]$ from below;
note that we only pay for~$V^+_v$ exceeding its expectation
by approximately $2\sigma g_n$.
Let $\mathcal W=\big\{b^*,\ldots,b^*+\big\lfloor\sqrt{k'_n}\big\rfloor\big\}$.
For $b\in\mathcal W$ we have
$\big|(b-1)\frac{q_+}{\sigma}-(v-1)\big|\leq g_n+13\sqrt{k'_n}$
and $b\leq 3k'_n$
(recall also that $v\geq \eps_1k'_n$), so it is elementary to obtain
from~\eqref{eq_B_r_negbin} and Stirling's formula that
\[
 \IP\big[V^+_v=b\big] \geq \frac{c}{\sqrt{k'_n}}
 \exp\Big(-c_1\frac{g_n^2}{k'_n}\Big)
 \qquad\text{for all }b\in\mathcal W.
\]
Since $|\mathcal W|\geq\sqrt{k'_n}$, together
with~\eqref{eq_lower_via_Br} this implies
the lower bound~\eqref{eq_exc_noncovering_lower}.

In the case of decreasing events, one uses the same
construction with $v=\underline k_n+g_n$,
$b_*=\lfloor 2\sigma\underline k_n-11\sqrt{\underline k_n}\rfloor$
in place of~$b^*$, and
$\mathcal W=\{b_*-\lfloor\sqrt{\underline k_n}\rfloor,
\ldots,b_*\}$; now, \eqref{eq_Phi_bstar} is replaced by
$\IP[\Phi(b_*)\in E^{\B(n)}]\geq\IP[\Phi(V_{\underline k_n})\in E^{\B(n)}]
-\IP[V^+_{\underline k_n}<b_*]\geq\frac{4}{5}$,
and, since~$E^{\B(n)}$ is decreasing, it occurs
on $\{\Phi(b_*)\in E^{\B(n)}\}\cap\{V^-_v\leq b_*\}$.
Note, however, that $V^-_v\geq v$, so this argument requires
that $g_n\leq (2\sigma-1)\underline k_n-C\sqrt{\underline k_n}$;
since $2\sigma-1>\frac{p}{2}$, this holds for all large enough~$n$
when $g_n\leq\eps_3\underline k_n$ with $\eps_3:=\frac{p}{2}$.

\bigskip

Now, let us prove the upper bound~\eqref{eq_exc_noncovering_upper}.
Let~$K_0$ be a large constant (depending on~$\eps_1$ and~$\eps_2$),
to be chosen later. If $g_n<K_0\sqrt{k_n}$, there is nothing to prove:
since $\IP^{(j)}[E^{\B(n)}]$ is nondecreasing in~$j$, by~\eqref{def_kn}
we have
\[
 \IP^{(k_n-g_n)}[E^{\B(n)}] \leq \IP^{(k_n)}[E^{\B(n)}]
 \leq \frac{1}{7} \leq \exp\Big(-c_2 \frac{g_n^2}{k_n}\Big),
\]
provided that $c_2\leq K_0^{-2}\ln 7$ (note that~\eqref{eq_regularity}
is not needed here).
So, from now on we assume that $g_n\geq K_0\sqrt{k_n}$; in the
estimates below, we use (mostly without mentioning it explicitly)
that $g_n/\sqrt{k_n}$ is larger than a suitable constant, e.g., for
the quantities~$m$ and~$s$ defined below to be at least~$2$, and to absorb
the factors which are polynomial in $g_n/\sqrt{k_n}$ into the exponential terms.
Fix $\beta_1,\beta_2$ such that
$\frac{1}{2}<\beta_2<\beta_1<1$.
Define, with small enough~$c$
(recall~\eqref{eq_regularity})
\[
 m := c\Big(\frac{2\alpha g_n}{\sqrt{k_n}}\Big)^{\eps_2} .
\]
Note that, by~\eqref{eq_regularity} (used with $h=\frac{m}{\alpha}$), we have
$k_n-\beta_1 g_n \leq k_n-\frac{1}{2}g_n \leq k_{\alpha n/m}$.
Here and below, the numbers of excursions of the
form $k_n-\beta g_n$ need not be integers, and they are rounded
as explained in the beginning of this section; since the thresholds are
integers, the comparisons with the thresholds are not affected by that
(e.g., the above implies that
$\lceil k_n-\beta_1 g_n\rceil \leq k_{\alpha n/m}$).
Next, 
consider an
admissible collection of pairs $(B_i,B'_i)$,
$i=1,\ldots,m^2$, as in~\eqref{admissible_collection}
(with $\tilde\alpha=\alpha$).
We say that $(B_i,B'_i)$ is \emph{good},
if, in the free soft local times construction
on~$\partial B_i$,
$k_n-\beta_1 g_n$ independent excursions
(i.e., those of $\widetilde\ZZ_i(k_n-\beta_1g_n)$)
make the event $(E^{B_i})^\complement$ occur.
Let $G_i=\{(B_i,B'_i) \text{ is good}\}$, and note
that the events $G_1,\ldots,G_{m^2}$ are independent
(by the last part of Corollary~\ref{c_SLT_domination}).

Denote
\[
 \psi_{n,m} = 1-\IP\big[(B_i,B'_i)\text{ is good}\big]
 = \IP_{\alpha\frac{n}{m}}^{(\lceil k_n-\beta_1 g_n\rceil)}
 \big[E^{\B(\alpha\frac{n}{m})}\big].
\]
We now obtain an upper bound on~$\psi_{n,m}$.
The idea is to place~$s^2$ smaller nested disks
of (inner disks) radii~$\frac{\alpha^2 n}{ms}$
into~$B_1$, where
$s=\frac{2\alpha g_n}{\sqrt{k_n}}$
(see Figure~\ref{f_two_scales}) and
then use the soft local time construction
for generating excursions on these smaller
nested disks (that is, now $B_1\to\partial B'_1$ excursions are the ``big excursions'').
\begin{figure}[t]
\begin{center}
\includegraphics[scale=0.85]{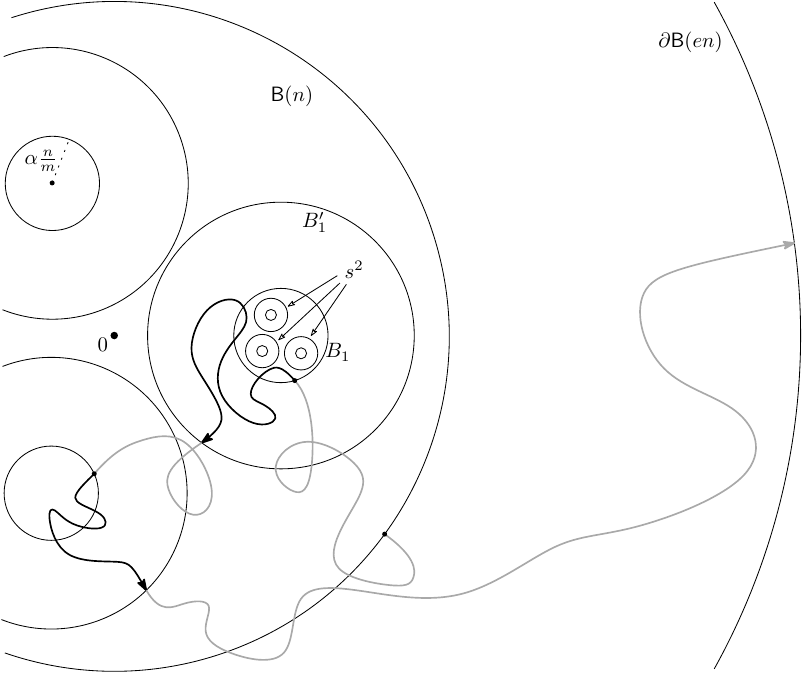}
\caption{On the
proof of Theorem~\ref{t_exc_noncovering}: smaller
excursions generated by a larger one, and nested disks
in two scales.}
\label{f_two_scales}
\end{center}
\end{figure}
Note that, by~\eqref{eq_regularity} (now used with $h=\frac{ms}{\alpha^2}$,
which is at most $\big(\frac{\alpha g_n}{\sqrt{k_n}}\big)^{1+\eps_2}$
when $c\leq 2^{-1-\eps_2}\alpha^2$), we have $k_n-\beta_2g_n \leq k_n-\frac{1}{2}g_n \leq k_{\alpha^2n/ms}$.
Analogously, a smaller pair of nested disks~$(\hB_j,\hB_j')$
is good when
$k_n-\beta_2 g_n$ independent
$\hB_j\to\partial\hB_j'$ excursions
make the event $(E^{\hB_j})^\complement$ occur;
we denote the corresponding event by~$\hG_j$.
Again, it holds that the events
$(\hG_j,j=1,\ldots,s^2)$ are independent
(by the last part of Corollary~\ref{c_SLT_domination}).
Consider the excursions in $(\hB_j,\hB'_j)$
generated by $k_n-\beta_1 g_n$ independent excursions between~$B_1$ and~$\partial B'_1$.
By Corollary~\ref{c_count_SLT}, used with $v=k_n-\beta_1g_n$ and
$t=\min\{(\beta_1-\beta_2)g_n,v\}$ (note that $v+t\leq k_n-\beta_2g_n$,
and that $t\geq C_{10}(\sqrt{v}+v/r)$ with $r=\frac{\alpha^2n}{ms}$
if~$K_0$ is large enough),
with probability at least $1 - C\exp\big(-c\frac{g_n^2}{k_n\ln s}\big)$,
all real excursions in $(\hB_j,\hB'_j)$ are among the
first $\lceil k_n-\beta_2 g_n\rceil$ independent ones of the
free soft local times construction on~$\partial\hB_j$.
Therefore,
the probability that the above holds in all
the smaller nested disks is at least
$1 - Cs^2\exp\big(-c\tfrac{g_n^2}{k_n\ln  s}\big)$.
We also have the following estimate for the
probability that at least one of the smaller pairs of nested disks is good:
\[
\IP\Big[\bigcup_{j=1}^{s^2} \hG_j\Big]
 \geq 1 - 7^{-s^2}.
\]
This implies that
\begin{equation}
\label{est_psi_n}
 \psi_{n,m} \leq 
Cs^2\exp\big(-c\tfrac{g_n^2}{k_n\ln  s}\big) + 7^{-s^2} \leq \exp\big(-c'\tfrac{g_n^2}{k_n\ln(g_n/\sqrt{k_n})}\big).
\end{equation}
Let us mention that
the above is an ``easier'' estimate that would
also work directly for $\IP^{(k_n-g_n)}\big[E^{\B(n)}\big]$;
but we want to make it sharper (basically, by removing the logarithmic factor in the denominator).
This is why we employ this ``two-scale'' construction.

Now, with $a_n=\eps' / \ln m$,
let us define the event
\[
 V_n = \Big\{ m^{-2}\sum_{i=1}^{m^2}\1{G_i}
  \geq 1-a_n\Big\},
\]
i.e., that the proportion of good pairs of nested disks
is at least $1-a_n$.
Then, recall that~$\ZZ_i$ is the set of the $B_i\to\partial B'_i$
excursions generated by the big excursions (now, these are again
the $n\to en$ ones, and there are $k_n-g_n$ of them), and
define the events
\[
 R_i = \big\{\ZZ_i\subset\widetilde\ZZ_i(k_n-\beta_1g_n)\big\}
\]
and
\[
 D_n = \Big\{m^{-2}\sum_{i=1}^{m^2}\1{R_i} > a_n\Big\}
\]
that is, the proportion of nested disk pairs
where the real excursions are among the first
$k_n-\beta_1 g_n$ independent ones is larger
than~$a_n$.
Note that on $V_n\cap D_n$ we have $(E^{\B(n)})^\complement$.

We first use the Chernoff bound
for the Binomial distribution\footnote{If
$X\sim\mathop{Bin}(n,p)$ and $p<a<1$, then
$\IP[X\geq an]\leq \exp\big(-n\HH(a\,\|\,p)\big)$, where $\HH(a\,\|\,p)=a\ln\frac{a}{p}+(1-a)\ln\frac{1-a}{1-p}$. Note that, since $(1-a)\ln\frac{1-p}{1-a}\leq a-p$, we have
$\HH(a\,\|\,p)\geq a\ln\frac{a}{p}-a+p\geq
a\ln\frac{a}{ep}$.}
to obtain that
\begin{equation}
\label{est_V_n}
 \IP[V_n] \geq 1 - \exp\big(- a_n\ln\tfrac{a_n}{e\psi_{n,m}}\times m^2\big)
 \geq 1 - \exp\big(- c' \big(\tfrac{g_n}{\sqrt{k_n}}\big)^{2(1+\eps_2)}
 \big(\ln\tfrac{g_n}{\sqrt{k_n}}\big)^{-2}\big).
\end{equation}

Next, let~$\zeta_i$ be the number of $B_i\to\partial B'_i$ excursions generated
by~$k_n-g_n$ big excursions.
Let us define, with $\beta_1<\beta_4<\beta_3<1$
and~$\Lambda$ to be defined later (right after~\eqref{est_D_n_3})
\begin{align*}
D_n^{(1)} & = \Big\{\sum_{i=1}^{m^2}\zeta_i
\leq m^2(k_n-\beta_3 g_n)\Big\},\\
D_n^{(2)} & = \big\{\zeta_i \geq k_n-\Lambda\text{ for all }i=1,\ldots,m^2\big\},\\
D_n^{(3)} & = \big\{
\ZZ_i(k_n-\beta_4g_n)\subset\widetilde\ZZ_i(k_n-\beta_1 g_n)
 \text{ for all }i=1,\ldots,m^2\big\},
\end{align*}
where~$\Lambda$ is such that $\frac{(\beta_3-\beta_4)g_n}{\Lambda-\beta_4 g_n}> a_n$.
Let
\[
 \NN = \sum_{i=1}^{m^2}\1{\zeta_i\leq k_n-\beta_4 g_n}
\]
be the number of nested disk pairs where
the excursion count is ``not too large''.
If we also have $\zeta_i\geq k_n-\Lambda$ for all~$i$,
this implies on the event~$D_n^{(1)}$ that
\[
 m^2(k_n-\beta_3 g_n) \geq \NN(k_n-\Lambda)
  +(m^2-\NN)(k_n - \beta_4 g_n),
\]
so
\[
 \frac{\NN}{m^2} \geq \frac{(\beta_3-\beta_4)g_n}{\Lambda-\beta_4 g_n}.
\]
Since~$R_i$ occurs when $\zeta_i\leq k_n-\beta_4 g_n$ and~$D_n^{(3)}$
occurs, we obtain that
$D_n\supset D_n^{(1)}\cap D_n^{(2)}\cap D_n^{(3)}$.

Now, Lemma~\ref{l_excurs_control_excursions}
(with $\delta$ of order $g_n/k_n$) implies that
\begin{equation}
\label{est_D_n_1}
 \IP[D_n^{(1)}] \geq
 1-2\exp\Big(-c\frac{g_n^2}{k_n}\Big).
\end{equation}
Then, Lemma~\ref{l_exc_count_one_pair}
together with the union bound
implies that (for $\Lambda\geq 2g_n$; note that~$D_n^{(2)}$
trivially occurs when $\Lambda>k_n$)
\begin{equation}
\label{est_D_n_2}
 \IP[D_n^{(2)}] \geq
 1-2m^2\exp\Big(-c\frac{\Lambda\wedge(\Lambda^2/k_n)}{\ln m}\Big).
\end{equation}
Finally, Corollary~\ref{c_SLT_domination} (used with
$k=\lfloor k_n-\beta_4g_n\rfloor$ and $t=\min\{(\beta_4-\beta_1)g_n,k\}$)
and the union bound imply that
\begin{equation}
\label{est_D_n_3}
 \IP[D_n^{(3)}] \geq
 1-m^2\exp\Big(-c\frac{g_n^2}{k_n}\Big).
\end{equation}
We take $\Lambda=C g_n \ln m$
and recall that $a_n=\eps' / \ln m$.
Then, for large~$C$ and small~$\eps'$, the condition
$\frac{(\beta_3-\beta_4)g_n}{\Lambda-\beta_4 g_n}> a_n$ holds,
and, since $g_n\leq k_n$, the right-hand side of~\eqref{est_D_n_2}
is at least $1-2m^2\exp\big(-c\frac{g_n^2}{k_n}\big)$.
Collecting our estimates
\eqref{est_V_n}--\eqref{est_D_n_3}, we obtain~\eqref{eq_exc_noncovering_upper}.

In the case of decreasing events, the argument is the same
(including the trivial case $g_n<K_0\sqrt{\underline k'_n}$,
where one uses~\eqref{def_hkn_dec}),
with~$\underline k'_n$ in place of~$k_n$
and~\eqref{eq_regularity_dec} in place of~\eqref{eq_regularity}, with the opposite signs
in front of~$g_n$, $\beta_jg_n$, and~$\Lambda$, and with the opposite
inequalities and inclusions for the excursion counts and the sets
of excursions
(so that we use the first inclusions in~\eqref{eq_SLT_domination}
and~\eqref{eq_count_SLT} instead of the second ones).
The only difference is that
$D_n^{(2)}=\big\{\zeta_i \leq \underline k'_n+\Lambda
\text{ for all }i=1,\ldots,m^2\big\}$
is now about the \emph{upper} deviations
of sums of $\underline k'_n+g_n$ terms, and~$\Lambda$ may be
larger than the number of these terms; this is the reason why
Lemma~\ref{l_exc_count_one_pair} is formulated for all $\delta>0$,
and why we take~$\Lambda$ of order $g_n\ln m$
(in the increasing case, $\Lambda$ of order $g_n\sqrt{\ln m}$
would be enough).
This finishes the proof of Theorem~\ref{t_exc_noncovering}.
\end{proof}

Next, we state a technical lemma needed for the proof of the lower bound in Theorem~\ref{t_exc_covering}.
\begin{lem}
\label{l_exc_escape_A}
 Let~$A\subset \B(n)$ be such that $\diam(A)\leq r$
(where $r\geq 1$).
Then, for a universal constant $C\in\R$
(not necessarily positive), we have
\begin{equation}
\label{eq_exc_escape_A}
\IP_{\hm_{\B(n)}^{\B(en)}}[\tau_{\partial\B(en)}<\tau_A]
 \geq 1 - \frac{1}{1\vee(\ln\frac{n}{r} + C)}.
\end{equation}
\end{lem}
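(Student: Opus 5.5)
We need to show that an excursion started from $\hm_{\B(n)}^{\B(en)}$ hits a set $A$ of diameter at most $r$ with probability at most $1/(\ln\frac nr+C)$. The plan is to replace $A$ by a disk, reduce to a last-exit identity, and then estimate Green's functions.

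\textbf{Reductions.} First we may assume $\frac nr$ is large, say $\ln\frac nr+C\geq 2$. Otherwise the right-hand side of~\eqref{eq_exc_escape_A} is $\leq 0$ once we note that $1\vee(\cdot)\geq 1$. More precisely, if $\ln\frac nr+C\leq 1$ the bound reads $\geq 0$, which is trivial. So by choosing $C$ negative enough, every regime with $n/r$ bounded is trivial.

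Next, pick $x_0\in A$. Then $A\subset\B(x_0,r)=:D$, and hitting $A$ requires hitting $D$. So it suffices to show
\[
\IP_{\hm_{\B(n)}^{\B(en)}}[\tau_D<\tau_{\partial\B(en)}]\leq \frac{1}{\ln\frac nr+C}.
\]

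\textbf{Exact identity.} Let $G(\cdot,\cdot)$ be the Green's function of the SRW killed on hitting $\partial\B(en)$. By Lemma~\ref{l_expected_generated}, the expected number of visits to $x_0$ made by one excursion started from $\hm^{\B(en)}_{\B(n)}$ is $1/\Es_{\B(n)}^{\B(en)}=\frac{2}{\pi}+O(n^{-1})$, using~\eqref{expr_esc_disks} with $R/r=e$. Denote by $N_{x_0}$ this number of visits, and decompose at the entrance time $\tau_D$:
\[
\IE N_{x_0}=\IE\big[\1{\tau_D<\tau_{\partial\B(en)}}\,G(S_{\tau_D},x_0)\big]\geq \IP[\tau_D<\tau_{\partial\B(en)}]\min_{y\in D}G(y,x_0).
\]
It therefore remains to show $\min_{y\in D}G(y,x_0)\geq \frac2\pi\big(\ln\frac nr+C'\big)$.

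\textbf{Green's function estimate (main obstacle).} Use $G(y,x_0)=\IE_y[a(S_{\tau})]-a(y-x_0)$, where $a$ is the potential kernel and $\tau=\tau_{\partial\B(en)}$. We have $a(z)=\frac2\pi\ln\|z\|+\lambda+O(\|z\|^{-2})$. For $y\in D$, $\|y-x_0\|\leq r$, so $a(y-x_0)\leq\frac2\pi\ln r+C$. For $z\in\partial\B(en)$, since $x_0\in\B(n)$ we get $\|z-x_0\|\geq (e-1)n$, so $a(z-x_0)\geq\frac2\pi\ln n-C$. Subtracting gives $G(y,x_0)\geq\frac2\pi\ln\frac nr-C''$.

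The delicate points are the uniformity in $x_0$ anywhere in $\B(n)$, including near $\partial\B(n)$, which the bound $\|z-x_0\|\geq(e-1)n$ handles, and the case $y=x_0$, $r$ small, which is fine since $a(0)=0$. These could alternatively be cited from Lemma~3.11 of~\cite{P21}-type estimates.

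\textbf{Conclusion.} Combining the three steps,
\[
\IP[\tau_D<\tau]\leq\frac{\frac2\pi+O(n^{-1})}{\frac2\pi\ln\frac nr-C''}\leq\frac{1}{\ln\frac nr+C}
\]
for a suitable universal $C$. Taking complements yields~\eqref{eq_exc_escape_A}.
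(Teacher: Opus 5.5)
Your proof is correct and is essentially the paper's argument: the exact expected visit count from Lemma~\ref{l_expected_generated} and~\eqref{expr_esc_disks}, a first-entrance decomposition, and a uniform lower bound $\frac{2}{\pi}\ln\frac{n}{r}-O(1)$ on the Green's function killed at $\partial\B(en)$, with $r\geq 1$ used to absorb the $O(n^{-1})$ factor into the constant. The differences are cosmetic: you count visits to the single point $x_0$ (and enlarge $A$ to $\B(x_0,r)$, which is harmless but unnecessary) rather than to all of $A$, and you derive the Green's function bound from the potential kernel instead of citing (3.47) of~\cite{P21}.
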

\begin{proof}
 Let
$p_A:=
\IP_{\hm_{\B(n)}^{\B(en)}}\big[
\tau_A<\tau_{\partial\B(en)}
\big]$.
Since $p_A\leq 1$, it is enough to prove that
\begin{equation}
\label{eq_p_A_upper}
p_A\Big(\ln\frac{n}{r}+O(1)\Big)\leq 1+O(n^{-1})
\end{equation}
(indeed, since $r\geq 1$, the factor $1+O(n^{-1})$ only changes the
constant in $\ln\frac{n}{r}+O(1)$).
Note that, by Lemma~\ref{l_expected_generated}
and~\eqref{expr_esc_disks} the expected visit
count to~$A$ by one $n\to en$ excursion
(started from the relative harmonic measure~$\hm_{\B(n)}^{\B(en)}$) is
$|A|/\Es_{\B(n)}^{\B(en)}=\frac{2}{\pi}|A|(1+O(n^{-1}))$. On the other hand, standard estimates (e.g., (3.47) of~\cite{P21})
on the Green's function
restricted to~$\B(en)$ imply that, when starting from any
element of~$A$, the expected visit count to~$A$
before hitting~$\partial\B(en)$
is at least $\frac{2}{\pi}|A|(\ln\frac{n}{r}+O(1))$.
We therefore obtain (by conditioning on the first
entrance to~$A$ and using the fact that the last estimate is uniform)
\[
 \frac{2}{\pi}|A|\big(1+O(n^{-1})\big)\geq p_A \times \frac{2}{\pi}|A|\Big(\ln\frac{n}{r}+O(1)\Big),
\]
which implies~\eqref{eq_p_A_upper}.
\end{proof}

\begin{proof}[Proof of Theorem~\ref{t_exc_covering}.]
The proof of the lower bound~\eqref{eq_exc_covering_lower}
is straightforward: we consider first~$k_n$
big excursions, and check if the event $(E^{\B(n)})^\complement$ occurred and if
it happened that~$\vr_n\leq r$ (meaning
that there is some~$A$ with $\diam(A)\leq r$
such that the event $(E^{A})^\complement$ occurred).
If that was the case, we force the remaining~$g_n$
excursions to avoid that~$A$; by Lemma~\ref{l_exc_escape_A},
the conditional probability of that is at least
the second factor in~\eqref{eq_exc_covering_lower}.

Unfortunately, the proof of the lower
bound~\eqref{eq_exc_covering_lower_decreasing}
is not so straightforward in the case of
decreasing events. The reason is that, instead
of adding~$g_n$ independent excursions and paying
for them to avoid the witness set, we would
now have to \emph{remove}~$g_n$ excursions
according to a suitable (possibly random) procedure
and hope that none of these excursions
pass through the witness set;
however, the excursions are no longer independent
 of the witness set (which is random!), so it is not immediately
clear what would be the cost of that.
We therefore need some extra care to deal
with that issue.
Fix~$r$ and $\gamma\geq \gamma_0$. Let~$\mathcal D_{n,r}$ be a deterministic
family of disks of radius~$4r$ such that any $A\subset\B(n)$
with $\diam(A)\leq r$ is contained in one of them, with
$|\mathcal D_{n,r}|\leq c_1(n/r)^2$. For $D\in\mathcal D_{n,r}$,
let~$Q(D)$ denote the number of the first~$\underline{k}'_n$
excursions that visit~$D\cap\B(n)$, and define
\[
H_r(\gamma)=\Big\{
Q(D)\leq \frac{\gamma\underline{k}'_n}{\ln\frac{en}{r}}
\text{ for every }D\in\mathcal D_{n,r}\Big\}.
\]
By Lemma~\ref{l_exc_escape_A} (applied to the sets $D\cap\B(n)$,
whose diameters are at most~$8r$), a binomial large-deviation estimate,
and a union bound over~$\mathcal D_{n,r}$, we have, if~$\gamma_0$ is large enough,
\begin{equation}
\label{eq_H_r_decreasing}
\IP^{(\underline{k}'_n)}\big[H_r(\gamma)^\complement\big]
\leq c_1\Big(\frac{n}{r}\Big)^2
\exp\Big(-c_2\frac{\gamma\underline{k}'_n}
{\ln\frac{en}{r}}\Big).
\end{equation}

On the event~$\{\vr_n\leq r\}$, choose a witness
$A\subset\B(n)$ such that
$\diam(A)\leq r$ and $(E^A)^\complement$ occurs.
The set~$A$ is contained in some
$D\in\mathcal D_{n,r}$, and hence, on $H_r(\gamma)$, at most
\[
q_r(\gamma):=\frac{\gamma\underline{k}'_n}{\ln\frac{en}{r}}
\]
of the excursions visit~$A$.

Now, uniformly choose $\underline{k}'_n-g_n$ of the
$\underline{k}'_n$ excursions. If all excursions visiting~$A$
are retained, then the local-time field on~$A$ is unchanged, and
therefore $(E^A)^\complement$ still occurs. Conditionally on the
$\underline{k}'_n$ excursions, the probability of this event is at least
\[
\frac{\binom{\underline{k}'_n-q_r(\gamma)}{g_n}}
{\binom{\underline{k}'_n}{g_n}}
\geq
\Big(0\vee\Big(1-\frac{c_3\gamma}{\ln\frac{en}{r}}\Big)\Big)^{g_n},
\]
where we used $g_n\leq(1-\eps_1)\underline{k}'_n$ (so that one can take $c_3=\eps_1^{-1}$).
By exchangeability, the retained excursions have the law of
$\underline{k}'_n-g_n$ independent excursions. Consequently,
\[
\IP^{(\underline{k}'_n-g_n)}
\big[(E^{\B(n)})^\complement\big]
\geq
\IP^{(\underline{k}'_n)}
[\vr_n\leq r, H_r(\gamma)] \times
\Big(0\vee\Big(1-\frac{c_3\gamma}{\ln\frac{en}{r}}\Big)\Big)^{g_n}.
\]
Finally,
$\IP^{(\underline{k}'_n)}[\vr_n\leq r,H_r(\gamma)]
\geq
\IP^{(\underline{k}'_n)}[\vr_n\leq r]
-\IP^{(\underline{k}'_n)}[H_r(\gamma)^\complement]$,
and~\eqref{eq_H_r_decreasing} yields~\eqref{eq_exc_covering_lower_decreasing}
(by also taking the supremum over~$r$ and~$\gamma$).

\bigskip

We now prove the upper bound~\eqref{eq_exc_covering}.
Abbreviate $q_n=\IP^{(k'_n+g_n)}\big[(E^{\B(n)})^\complement\big]$.
Fix $m\in \N$ that satisfies~\eqref{df_m_n_covering}; that is,
with $\beta_1:=\frac{1}{2}$, we have
$m\leq \exp\big(\tilde c\frac{g_n}{\sqrt{k'_n}}\big)$ and
$\lfloor k'_n+\beta_1 g_n\rfloor \geq k'_{\alpha^{-1}mn}$
(recall that the thresholds are integers).
Let us consider
$k'_n+\beta_1 g_n$ excursions from
$\B(\alpha^{-1}mn)$ to~$\partial\B(e\alpha^{-1}mn)$.
In $\B(\alpha^{-1}mn)$, consider an admissible collection
$(B_i,B'_i)$, $i=1,\ldots,m^2$ (with $\tilde\alpha=\alpha$), where
$B_i$'s are translates of~$\B(n)$
and $B'_i$'s are translates of~$\B(en)$,
see Figure~\ref{f_bigger_annulus}.
\begin{figure}
\begin{center}
\includegraphics[width=0.85\textwidth]{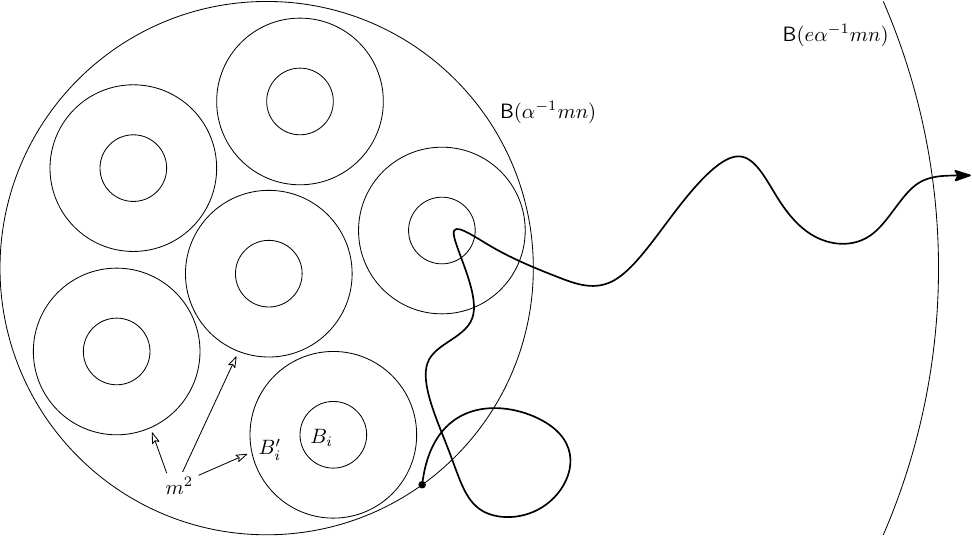}
\caption{Placing $m^2$ copies of $(\B(n),\B(en))$ onto a larger disk.}
\label{f_bigger_annulus}
\end{center}
\end{figure}
For $i=1,\ldots,m^2$,
define the events
\begin{align*}
R_i &=\big\{\ZZ_i\subset\widetilde\ZZ_i(k'_n+g_n)\big\},
\end{align*}
where, as before, $\ZZ_i$ is the set of the $B_i\to\partial B'_i$ excursions
generated by the big excursions (here, the above $k'_n+\beta_1 g_n$ ones);
that is, $R_i$ is the event that in the $i$th translate, real excursions are a subset
of $k'_n+g_n$ independent excursions.
Also, let~$G_i$ be the event that~$E^{B_i}$
occurs with $k'_n+g_n$ independent
$B_i\to\partial B'_i$ excursions (i.e., those
of $\widetilde\ZZ_i(k'_n+g_n)$);
note that $\IP[G_i]=1-q_n$ and
$G_1,\ldots,G_{m^2}$ are independent
(by the last part of Corollary~\ref{c_SLT_domination}).

Let us abbreviate
$\IP_*[\cdot]:=\IP^{(k'_n+\beta_1 g_n)}_{\alpha^{-1}mn}[\cdot]$.
Then, Corollary~\ref{c_count_SLT}, used with $v=k'_n+\beta_1g_n$
and $t=(1-\beta_1)g_n$ (note that we may assume that $m\geq 2$, so that
$g_n\geq\tilde c^{-1}\sqrt{k'_n}\ln 2$ by~\eqref{df_m_n_covering},
and therefore $t\geq C_{10}(\sqrt{v}+v/n)$ if~$\tilde c$ is small enough;
recall also that $g_n\leq k'_n$, so that $t\leq v$,
and that $\ln\frac{em}{\alpha}\leq C\ln m$), implies that
\begin{equation}
\label{est_Uk}
 \IP_*[R_i]\geq 1 -C\exp\Big(-c\frac{g_n^2}{k'_n \ln m}\Big).
\end{equation}
So, using the union bound, we can write
\begin{equation}
\label{est_cup_Ukcompl}
 \IP_*\Big[\bigcup_{i=1}^{m^2} R_i^\complement\Big]
 \leq Cm^2 \exp\Big(-c\frac{g_n^2}{k'_n \ln m}\Big).
\end{equation}

Note that, since $m\leq \exp\big(\tilde c\frac{g_n}{\sqrt{k'_n}}\big)$
(recall also that $m\geq 2$), if~$\tilde c$
is sufficiently small, then
the right-hand side of~\eqref{est_cup_Ukcompl}
does not exceed~$e^{-1}$.
Then, write for large enough~$n$
(note that $E^{B_i}\cap R_i \subset G_i$
and that $\frac{6}{7}>2e^{-1}$)
\begin{align*}
 2e^{-1} &\leq
  \IP_*\big[E^{\B(\alpha^{-1}mn)}\big]\\*
  &\leq
\IP_*\Big[\bigcap_{i=1}^{m^2} E^{B_i}\Big]
  \\*
  &\leq
\IP_*\Big[\bigcup_{i=1}^{m^2} R_i^\complement\Big]
   +
\IP_*\Big[\bigcap_{i=1}^{m^2} (E^{B_i}\cap R_i)\Big]
   \\*
&\leq e^{-1} +
\big(\IP^{(k'_n+g_n)}[E^{\B(n)}]\big)^{m^2}
\\*
  &= e^{-1} + (1-q_n)^{m^2}.
\end{align*}
This means that
$(1-q_n)^{m^2}\geq e^{-1}$,
that is,
\[
 q_n \leq \ln(1-q_n)^{-1}
  \leq m^{-2}.
\]

In the case of decreasing events, the argument is the same,
now with $\underline k_n-\beta_1g_n$ big excursions (the condition
$\underline k_n-\beta_1g_n\leq\underline k_{\alpha^{-1}mn}$ assures that
$\IP_*\big[E^{\B(\alpha^{-1}mn)}\big]\geq\frac{6}{7}$),
and $R_i=\big\{\widetilde\ZZ_i(\underline k_n-g_n)\subset\ZZ_i\big\}$
(we then use the first inclusion in~\eqref{eq_count_SLT},
with $v=\underline k_n-\beta_1g_n$ and $t=(1-\beta_1)g_n$).
This concludes the proof of Theorem~\ref{t_exc_covering}.
\end{proof}

\section{Brownian excursions}
\label{s_brownian}
In this section we argue that Theorems~\ref{t_exc_noncovering}
and~\ref{t_exc_covering} remain valid for independent
\emph{Brownian} excursions. We do not repeat the proofs: we only
describe the setup and indicate the (few) places where
the arguments have to be adapted. Let us mention that the same
approach was adopted in~\cite{CP20}, where a number of results
for the Brownian counterpart of two-dimensional random interlacements
were obtained from the arguments of~\cite{CP17,CPV16} in this way.

In this section, $\B(x,r)=\{y\in\R^2:\|x-y\|\leq r\}$ is the closed
Euclidean disk, $\B^\circ(x,r)$ is its interior,
$\B(r):=\B(0,r)$, and ``disk'' always means a closed disk
of positive radius. For $n\geq 1$, let $W^{(1)},\ldots,W^{(m)}$ be
independent Brownian motions started at uniformly chosen points
of~$\partial\B(n)$ and stopped at the hitting times
$T_1,\ldots,T_m$ of~$\partial\B(en)$ (we still call them
$n\to en$ excursions, and denote by~$\IP_n^{(m)}$
the corresponding probability measure; the uniform measure
on~$\partial\B(n)$ is the counterpart of~$\hm_{\B(n)}^{\B(en)}$).
The field of local times is now replaced by the \emph{occupation measure}
\[
 \mu(G) = \sum_{i=1}^m\int_0^{T_i}\1{W^{(i)}_t\in G\cap\B(n)}\,dt,
 \qquad G\subset\R^2 \text{ Borel}
\]
(in the discrete case, $\ell$ is the density of the analogous measure
with respect to the counting measure), and we write $\mu\leq\mu'$ on~$D$
if $\mu(G)\leq\mu'(G)$ for all Borel $G\subset D$.
For a disk~$D$, let~$\M_f(D)$ be the set of finite Borel measures on~$D$,
endowed with the $\sigma$-algebra generated by the maps $\nu\mapsto\nu(G)$,
$G\subset D$ Borel. An event~$E^D$ is now a measurable subset of~$\M_f(D)$,
and we say that~$E^D$ occurs if $\mu|_D\in E^D$, where~$\mu|_D$ is
the restriction of~$\mu$ to~$D$;
it is increasing if $\mu\in E^D$ and $\mu\leq\mu'$ on~$D$
imply $\mu'\in E^D$ (decreasing and monotone events are then
defined as before).
A translation invariant collection $\EE=(E^D, D\text{ is a disk})$
is called global if~$E^D$ implies~$E^{D'}$ for all disks $D'\subset D$.
Then, the thresholds~\eqref{def_kn}--\eqref{def_hkn_dec},
Condition~L, and the conditions~\eqref{eq_regularity}--\eqref{eq_regularity_dec}
are taken verbatim, and
\[
 \vr_n = 1\vee\inf\big\{\diam(D): D\subset\B(n)\text{ is a disk such that }
  (E^D)^\complement \text{ occurs}\big\}
\]
(with $\inf\emptyset=+\infty$; we assume that~$\vr_n$ is a random variable,
which is clearly the case in the example below).
A natural example is the covering by the Wiener sausage of radius~$1$:
\begin{equation}
\label{df_cov_Brownian}
 E^D=\big\{\mu(\B^\circ(x,1))>0\text{ for all~$x$ such that }\B(x,1)\subset D\big\},
\end{equation}
i.e., all such~$x$ are at distance less than~$1$ from the trajectories;
by scaling, $E^{\B(n)}$ means that $\B(1-n^{-1})$ is covered by
the sausage of radius~$n^{-1}$ of~$m$ independent $1\to e$ excursions.

\begin{theo}
\label{t_brownian}
In the above setup (in particular, assuming that~$\vr_n$ is
a random variable), Theorems~\ref{t_exc_noncovering}
and~\ref{t_exc_covering} hold, \emph{mutatis mutandis}
(and with possibly different constants), for independent
Brownian excursions.
\end{theo}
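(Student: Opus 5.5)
The plan is to go through the proofs of Theorems~\ref{t_exc_noncovering} and~\ref{t_exc_covering} and check that they use only a small list of ingredients. For each ingredient we then give the Brownian counterpart. The list is as follows:
\begin{itemize}
\item[(a)] the stationarity of the relative harmonic measure, i.e., Lemmas~\ref{l_expected_generated} and~\ref{l_restriction_hm_indep}, and formula~\eqref{expr_esc_disks};
\item[(b)] the excursion-count controls of Lemmas~\ref{l_excurs_control_excursions} and~\ref{l_exc_count_one_pair};
\item[(c)] the soft local times comparison, Lemma~\ref{l_LD_SLT} and Corollaries~\ref{c_SLT_domination} and~\ref{c_count_SLT}, together with the kernel estimates of Lemma~\ref{l_kernel_SLT};
\item[(d)] the regeneration construction in the lower bound of Theorem~\ref{t_exc_noncovering}, which rests on~\eqref{eq_q_half};
\item[(e)] the escape estimate of Lemma~\ref{l_exc_escape_A};
\item[(f)] the purely combinatorial parts: monotonicity, globality, independence across disjoint disk pairs, exchangeability, and the Chernoff bounds.
\end{itemize}
Part~(f) transfers verbatim. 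The only change there is that witness sets are now disks. The covering family~$\mathcal D_{n,r}$ and the statement ``$A\subset D$'' then need disks $D'\supset A$ with $E^{D'}$ implied by $E^D$, and this is exactly what globality over disks gives. Monotonicity of $\IP^{(j)}[E^{\B(n)}]$ in~$j$ holds because adding an excursion only increases~$\mu$.

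For~(a), the Brownian analogues are cleaner. By rotational invariance the relative harmonic measure of $\B(x,r)$ with respect to $\B(x,R)$ is uniform on the circle. The escape probability from $\partial\B(x,r)$ to $\partial\B(x,R)$ is computed exactly through $\ln(\|y-x\|)$. The expected occupation time of~$D$ during one $n\to en$ excursion equals $|D|/\Es$ with the Brownian normalisation. One proves this either by putting the domain on a flat torus and using the ergodic theorem for stationary Brownian motion, as in Lemma~\ref{l_interexc_time}, or directly from the Green's function of the annulus. To make the excursions well defined we use the standard device of excursions from $\partial\B(x,r)$ to $\partial\B(x,R)$: each excursion starts at the first hitting of the inner circle after a visit to the outer one. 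The counts in~(b) then follow as before. We get geometric domination from the exact log-escape probabilities, and exponential tails of $\eta_1/m^2$ from the bounded expected exit time of $\B(en)$. Part~(e) comes from the same Green's function comparison, with the occupation time of the disk in place of the visit count. Part~(d) is easier: $q(y)=\frac12$ exactly, and the Harnack minorisation $K(y,\cdot)\ge p\,\mu_0$ holds for the Poisson kernel of the annulus.

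The main obstacle is~(c), because soft local times need a Poisson process on $\partial A\times\R_+$ with a continuous intensity (arc length)$\,\otimes\,du$. The construction of~\cite{SLT} is formulated for general state spaces, so Proposition~\ref{p_SLT_facts} carries over once $\phi(x,\cdot)$ is the density of the entrance law with respect to arc length. The key step is the continuum version of Lemma~\ref{l_kernel_SLT}. Here~\eqref{kernel_Harnack} and~\eqref{kernel_Holder} follow from explicit Poisson kernel bounds for the disk exterior, which are Harnack plus Hölder in the angular variable with $\beta=1$. Estimate~\eqref{kernel_flat} follows from the last-exit decomposition through $\partial\B(y_0,R/2)$, where the conditional hitting density is flat up to $O(\frac rR\ln\frac Rr)$. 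Estimate~\eqref{hm_vs_hmrel} becomes an identity. With these in hand, the empirical-process argument of Lemma~\ref{l_LD_SLT} goes through unchanged. The index set~$\partial A$ is now a circle, and its bracketing numbers are controlled by the same Hölder bound, still of order $(s\eps_{r,R})^{-1/\beta}$. In this setting $\eps_{r,R}$ can be taken to be $\frac rR\ln\frac Rr$, since there is no lattice correction of order $1/r$.

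It remains to check the measurability issues. The events now live in $\M_f(D)$, and the comparisons ``real excursions are a subset of independent ones'' imply $\mu\le\mu'$ on the relevant disk, because the occupation measure is additive over excursions. The statement assumes that~$\vr_n$ is a random variable, so the lower bound of Theorem~\ref{t_exc_covering}(i) needs no further work. Combining all of these substitutions, each step of the two proofs holds with possibly different constants, which gives Theorem~\ref{t_brownian}.
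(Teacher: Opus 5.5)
Your strategy is the paper's. You locate the few places where the lattice enters (the tools of Section~\ref{s_prelims}, \eqref{eq_q_half}, and Lemma~\ref{l_exc_escape_A}) and replace each by its Brownian counterpart. Your substitutions are the same as the paper's: uniform relative harmonic measure, exact logarithmic escape probabilities, $q\equiv\frac12$, the Harnack minorisation, the Green's function bound for Lemma~\ref{l_exc_escape_A}, soft local times with intensity (arc length)$\,\otimes\,du$, and \eqref{hm_vs_hmrel} becoming an identity.

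There is one step, however, that you dismiss incorrectly. You say that, since $\vr_n$ is assumed to be a random variable, the lower bounds of Theorem~\ref{t_exc_covering}~(i) need no further work. Measurability is not the issue. Both lower-bound proofs start by choosing, on $\{\vr_n\leq r\}$, a witness set of diameter at most~$r$. In the Brownian setting $\vr_n$ is an infimum over disks that need not be attained, so on $\{\vr_n\leq r\}$ there may be no witness disk of diameter at most~$r$.

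The fix, as the paper indicates, has three steps:
\begin{itemize}
\item[(1)] Work on $\{\vr_n<r+\eta\}\supset\{\vr_n\leq r\}$ and pick a witness disk of diameter less than $r+\eta$.
\item[(2)] Run the argument with $r+\eta$ in place of~$r$ in the escape factor. In the decreasing case, do the same in the covering family $\mathcal D_{n,r}$ and in \eqref{eq_H_r_decreasing}.
\item[(3)] Let $\eta\downarrow 0$, using that the right-hand sides of \eqref{eq_exc_covering_lower} and \eqref{eq_exc_covering_lower_decreasing} are continuous in~$r$.
\end{itemize}
In the increasing case you do not need a measurable selection of the witness. It suffices to bound the conditional probability given the first $k_n$ excursions pointwise, using the independence of the remaining $g_n$ ones.

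Three smaller remarks.
\begin{itemize}
\item[(a)] In the decreasing case, the disks of $\mathcal D_{n,r}$ only serve to count the excursions that visit the witness disk, and no event is evaluated on them. So your comment about needing $E^{D'}$ to be implied by $E^D$ is beside the point there. What is used is that $E^A$ is determined by $\mu|_A$, and that $(E^A)^\complement$ implies $(E^{\B(n)})^\complement$.
\item[(b)] Your last-exit argument for \eqref{kernel_flat} is correct but unnecessary. The soft local times are only used with $R=er$, where $\frac rR\ln\frac Rr$ is a constant, so \eqref{kernel_flat} follows from \eqref{kernel_Harnack}; this is how the paper handles it.
\item[(c)] The kernels whose smoothness gives \eqref{kernel_Harnack} and \eqref{kernel_Holder} are the exit densities from an annulus produced by the decomposition in the proof of Lemma~\ref{l_kernel_SLT}. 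They are not the exterior Poisson kernel of the disk itself, although this does not change the conclusion.
\end{itemize}
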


\begin{proof}[Sketch of the proof.]
The arguments of Section~\ref{s_proofs_main} use the
lattice structure only through the tools of Section~\ref{s_prelims},
the estimates~\eqref{eq_q_half}, and Lemma~\ref{l_exc_escape_A}.
Apart from that, they only rely on the monotonicity and the translation
invariance of the events, on the fact that~$E^D$ implies~$E^{D'}$
for $D'\subset D$, and on the fact that the field restricted to~$D$
only depends on the pieces of the trajectories that are inside~$D$
(this is the reason why we require~$E^D$ to be determined by~$\mu|_D$);
also, all the sets on which the events
are considered there are disks. So, let us go through the above
ingredients.
\begin{itemize}
\item[(i)] Lemmas~\ref{l_interexc_time}, \ref{l_expected_generated},
and~\ref{l_restriction_hm_indep} only rely on the stationarity and
the reversibility of the process on the torus, and therefore remain valid
for the Brownian motion on $\R^2/(N\Z)^2$ (with the Lebesgue measure
in place of the cardinality). For nested disks,
$\Es_{\B(x,r)}^{\B(x,R)}$ is replaced by $\pi\big(\ln\frac{R}{r}\big)^{-1}$
(compare to~\eqref{expr_esc_disks}), and the stationary entrance law
is uniform on~$\partial\B(x,r)$. In particular, one $n\to en$ excursion
generates \emph{exactly} one $\B(x,r)\to\partial\B(x,er)$ excursion
on average.
\item[(ii)] The hitting probabilities for concentric circles are explicit,
so the error terms in~\eqref{eq_excurs_control_Eeta} and in the expression
for~$\IE\zeta_1$ in Lemma~\ref{l_exc_count_one_pair} vanish,
and the proofs of Lemmas~\ref{l_excurs_control_excursions}
and~\ref{l_exc_count_one_pair}
go through without changes.
\item[(iii)] The soft local times method of~\cite{SLT} is formulated
for general state spaces. Here, the Poisson point process on $\partial\A\times\R_+$
has intensity (arc length)$\,\otimes\,du$, and~$\phi(x,\cdot)$ is
the density of the entrance measure with respect to the arc length.
In Lemma~\ref{l_kernel_SLT}, \eqref{hm_vs_hmrel} is now trivial,
\eqref{kernel_Harnack} and~\eqref{kernel_Holder} (with $\beta=1$)
follow by scaling from the smoothness of the density of the exit measure
from an annulus, and we only need~\eqref{kernel_flat}
for $R=er$, where it follows from~\eqref{kernel_Harnack}.
Then, the proofs of Lemma~\ref{l_LD_SLT} and
Corollaries~\ref{c_SLT_domination} and~\ref{c_count_SLT} do not change.
\item[(iv)] In~\eqref{eq_q_half} we now have $q\equiv\frac{1}{2}$
(so one can take $q_\pm=\frac{1}{2}$, and then $V^+_v=V_v=V^-_v$),
the second inequality
there is again a consequence of the Harnack inequality, and
Lemma~\ref{l_exc_escape_A} is obtained in the same way, from the
usual lower bound on the Green's function of the
Brownian motion killed on~$\partial\B(en)$.
\end{itemize}
Also, the infimum in the definition of~$\vr_n$ need not be attained;
so, in the proofs of the lower bounds in Theorem~\ref{t_exc_covering}~(i),
on the event $\{\vr_n\leq r\}$ one takes a witness disk of
diameter at most~$r+\eta$, and then sends~$\eta$ to~$0$. There are no
significant technical difficulties in making this work.
With these modifications, the proofs of Section~\ref{s_proofs_main}
apply.
\end{proof}

As for the applications, consider the covering
event~\eqref{df_cov_Brownian}. As in the discrete case, both~$k_n$
and~$k'_n$ are equal to $2\ln^2 n - (1+o(1))\ln n\ln\ln n$:
this is obtained from the main result of~\cite{BK14}
(see also~\cite{DPRZ04}) in the same way as explained
in Section~\ref{s_covering}
(for the concentration of the excursion counts of the Brownian
motion on the torus, see Section~8 of~\cite{BK14});
also, $(E^{\B(n)})^\complement$ is always witnessed on a disk of
radius~$1$, so $\vr_n=2$ on that event. Therefore,
Corollaries~\ref{c_CT_noncov} and~\ref{c_CT_cov} hold for the Brownian
excursions as well, with the same proofs.
We expect that this is also the case for the results of
Sections~\ref{s_maxdisk} and~\ref{s_LT_extrema} (with, respectively,
the radius of the largest disk covered by the sausage, and the extrema
of $\mu(\B(x,1))$ over the unit disks contained in~$\B(n)$);
however, this would require the Brownian analogues
of Theorem~\ref{t_max_disk_typical}
and of~\eqref{eq_extrema_typical},
and we do not elaborate on that here.

\section*{Appendix: the typical size of the maximal covered disk}
\begin{proof}[Proof of Theorem~\ref{t_max_disk_typical}.]
This result can be obtained by adapting the multiscale arguments
of~\cite{A15,DPRZ06} (see also the other papers mentioned before
Theorem~\ref{t_max_disk_typical}) as described below, and we do not
claim it as new; therefore, we only sketch its proof, referring to
these papers whenever possible.
For $b\in(0,1)$ and~$x$ such that $\B(x,en^b)\subset\B(n)$,
let~$N_x$ be the number
of $\B(x,n^b)\to\partial\B(x,en^b)$ excursions generated by
the $2\theta\ln^2 n$ big (i.e., $n\to en$) excursions.
The proof is based on the following two facts:
one needs $2b^2\ln^2 n\,(1+o(1))$ such excursions to
cover~$\B(x,n^b)$, and, for $a>\sqrt{\theta}$,
\begin{equation}
\label{eq_LD_exc_count}
 \IP\big[N_x\geq 2a^2\ln^2 n\big]
  \leq n^{-\frac{2(a-\sqrt{\theta})^2}{1-b}+o(1)}
\end{equation}
(one can show that the exponent here is sharp, but we only need
the bound as stated).
Since there are $n^{2(1-b)}$ ``essentially different''
disks of radius~$n^b$, the largest~$a$ for which one can find~$x$
with $N_x\geq 2a^2\ln^2 n$ should then be
$\sqrt{\theta}+1-b$, and we have $\sqrt{\theta}+1-b> b$
if and only if $b<\chi(\theta)$.

Let us first explain why~\eqref{eq_LD_exc_count} holds. By Lemma~\ref{l_exc_escape_A}
and Lemma~3.11 of~\cite{P21} (as in the proof of
Lemma~\ref{l_exc_count_one_pair}), the number of
$\B(x,n^b)\to\partial\B(x,en^b)$ excursions generated by one
big excursion is dominated by a random variable~$\xi$ which
equals~$0$ with probability $1-p$ and is Geometric($h$)
with probability~$p$, where
$p,h=\frac{1+o(1)}{(1-b)\ln n}$. Then, \eqref{eq_LD_exc_count} follows from the exponential
Chebyshev inequality
(used with $\lambda=\big(1-\frac{\sqrt{\theta}}{a}\big)h$).

Now, to obtain the upper bound in~\eqref{eq_max_disk_typical},
let $b=\chi(\theta)+\frac{\eps}{2}$ and take
$a\in(\sqrt{\theta}+1-b,b)$. By
Corollary~\ref{c_SLT_domination} and
the upper bound of Theorem~\ref{t_exc_noncovering}
(used for the covering event, as in Corollary~\ref{c_CT_noncov}),
we have
$\IP\big[\B(x,n^b)\text{ is covered},\,N_x<2a^2\ln^2 n\big]\leq e^{-c\ln^2 n}$.
Together with~\eqref{eq_LD_exc_count}, this shows that
the probability that~$\B(x,n^b)$ is covered is at most
$n^{-2(1-b)-\delta}$ for some $\delta>0$,
and we conclude with the union bound over
$O(n^{2(1-b)})$ disks (note that any disk of radius~$4n^b$
contains $\B(x,en^b)$ for some $x\in n^b\Z^2$); so, with high probability,
$\hR^{\B(n)}<4n^b\leq n^{\chi(\theta)+\eps}$.

As for the lower bound, we use
the second-moment argument of Section~3 of~\cite{A15}
(which goes back to~\cite{DPRZ06}); since all the necessary estimates
are contained in these two papers, we only indicate how they are used.
Let $b=\chi(\theta)-\eps$ and take $a\in(b,\sqrt{\theta}+1-b)$
(note that $b>\sqrt{\theta}$ if~$\eps$ is small enough).
Consider the SRW~$X$ on the torus~$\Z^2_K$ with $K=\lceil 9n\rceil$,
started outside~$\B(en)$.
In the rest of the proof we use the notation of~\cite{A15},
with~$L$ in place of~$n$ there (in particular,
the disks~$D(x,r)$ and their boundaries are those of~\cite{A15}):
let~$L$ and the radii $r_j=e^{L}L^{3(L-j)}$, $0\leq j\leq L$, be
as in~(3.5) there, so that $K=L^{\bga}r_0$ with $\bga\in[\hat b,\hat b+4]$,
where~$\hat b$ is a large constant.
Note that $\ln n=(3+o(1))L\ln L$ and $r_0=n\ln^{-\bga+o(1)}n$:
for the second-moment estimate, it is essential that
the largest scale is separated from the size of the torus
by a large power of the logarithm.
Let $d=\frac{a-\sqrt{\theta}}{1-b}\in(0,1)$ and
\[
 u_j=\Big\lceil 6\big(1-L^{-1/4}\big)\Big(\sqrt{\theta}+d\,\frac{j}{L}\Big)^2
 L^2\ln L\Big\rceil, \qquad 0\leq j\leq L-1
\]
(these are the quantities~$n_j$ of~\cite{A15}, with~$\eta$ given by
$\sqrt{\theta+2\eta\sqrt{\theta}}-\sqrt{\theta}=d$; the assumption
$\eta<1+\frac{1}{2\sqrt{\theta}}$ there means that $d<1$,
i.e., $a<\sqrt{\theta}+1-b$),
and let~$J$ be the largest integer such that $r_{J+1}>n^b$.
Then $J=(1-b+o(1))L$ and $u_J=(6+o(1))a^2L^2\ln L$; since the ratio
of the radii is now~$L^3$ instead of~$e$, the latter corresponds to
$N_x=2a^2\ln^2 n$.
We say that $x\in\B(n/2)$ is \emph{successful} if
$|\hN^x_j-u_j|\leq L$ for all $1\leq j\leq J$, where~$\hN^x_j$ is
the number of excursions from~$\partial D(x,r_{j+1})$
to~$\partial D(x,r_j)$ up to the moment~$\tau^x$ when~$X$ completes
its first~$u_0$ excursions from~$\partial D(x,r_1)$
to~$\partial D(x,r_0)$ and back.
This is Definition~3.1 of~\cite{A15}, with the only difference
that the scales below~$r_{J+1}$ are not used
(as in~(4.2) of~\cite{DPRZ06}).
Let~$Z$ be the number of successful sites in~$\B(n/2)$.
The proofs of Propositions~3.4 and~3.5 of~\cite{A15} treat the scales
one by one (and only concern the sequence of sites visited by the walk,
so it does not matter that the walk of~\cite{A15} is in continuous time);
it can then be obtained from the arguments there that
$\IP[x\text{ is successful}]=(1+o(1))\hat q$ uniformly in~$x$,
where~$\hat q$ does not depend on~$x$ and satisfies
$\hat q=n^{-\frac{2(a-\sqrt{\theta})^2}{1-b}+o(1)}$, and that,
if~$\hat b$ is large enough,
\[
 \IE Z^2\leq(1+o(1))(\IE Z)^2
\]
(it is here that one needs $a<\sqrt{\theta}+1-b$).
Note that, after the truncation, the analogue of Proposition~3.5~(i)
of~\cite{A15} only covers the pairs of sites with $\ell(x,y)\leq J-1$
(in the notation there); the remaining pairs are
at distance at most $2r_{J-1}+2=n^{b+o(1)}$ from each other, and
bounding the probability that both sites are successful
by~$(1+o(1))\hat q$ shows that their contribution to~$\IE Z^2$ is at most
$n^{2+2b+o(1)}\hat q=o\big((\IE Z)^2\big)$,
again because $a-\sqrt{\theta}<1-b$.

Next, let $Z'\leq Z$ be the number of successful sites $x\in\B(n/2)$
such that, in addition, $D(x,r_{J+1})\supset\B(x,n^b)$ is covered
by time~$\tau^x$. Since $a>b$, we have $\IE Z'=(1-o(1))\IE Z$.
Indeed, let~$\G^x$ be the $\sigma$-algebra generated by the excursions
of~$X$ from~$\partial D(x,r_{J-1})$ to~$\partial D(x,r_J)$, as in
Lemma~2.4 of~\cite{DPRZ06} (used with $r=r_{J+1}$, $R=r_J$, $R'=r_{J-1}$).
The events $\{|\hN^x_j-u_j|\leq L\}$, $j\leq J-1$, belong to~$\G^x$,
and, on $\{\hN^x_{J-1}=m\}$, both~$\hN^x_J$ and the trace of~$X$
on~$D(x,r_{J+1})$ up to time~$\tau^x$ are determined by the
excursions from~$\partial D(x,r_{J+1})$ to~$\partial D(x,r_J)$ that occur
during the first~$m$ excursions from~$\partial D(x,r_J)$
to~$\partial D(x,r_{J-1})$. By that lemma, uniformly in~$m$ such that
$|m-u_{J-1}|\leq L$, the conditional (on~$\G^x$) law of the former
excursions differs from the unconditional one by a factor
$1+O\big(m\frac{r_{J+1}}{r_J}\ln\frac{r_J}{r_{J+1}}\big)=1+o(1)$.
Under the latter, their number belongs
to $[u_J-L,u_J+L]$ with
probability at least~$L^{-C}$ (cf.~(3.15) of~\cite{A15}); on the other hand,
it is elementary to obtain that each such excursion
visits a given site of~$D(x,r_{J+1})$ with probability at least
$\frac{(3+o(1))\ln L}{b\ln n}$ uniformly in its starting point,
so the probability that some site of~$D(x,r_{J+1})$ is not visited
by the first $u_J-L$ of them is at most
$n^{2b-\frac{2a^2}{b}+o(1)}=o(L^{-C})$.
Then, by the Cauchy--Schwarz inequality (and since $0\leq Z'\leq Z$),
$\IP[Z'>0]\geq(\IE Z')^2/\IE Z'^2\geq(\IE Z')^2/\IE Z^2\to 1$.

Finally, we go back to independent excursions.
By~(3.19) of~\cite{DPRZ06} (used with $r=r_1$, $R=r_0$, and the relative
deviation~$L^{-1/4}$; recall the factor $1-L^{-1/4}$ in~$u_0$, and note
that $\ln K\geq 3L\ln L$), for each~$x$ we have
$\tau^x\leq t^*:=\frac{4\theta}{\pi}K^2\ln^2K$ with probability at
least $1-\exp(-cL^{3/2}\ln L)$; since $\ln n=(3+o(1))L\ln L$,
by the union bound, with high probability this holds for all $x\in\B(n/2)$.
Fix $\eps'>0$; by Lemma~2.11 of~\cite{CP17},
with high probability at most $2(\theta+\eps')\ln^2 n$ excursions of~$X$
between~$\partial\B(n)$ and~$\partial\B(en)$ start before time~$t^*$,
and, by Corollary~\ref{c_SLT_domination}, with high probability these
are among the first $2(\theta+2\eps')\ln^2 n$ independent ones.
So, with high probability, $2(\theta+2\eps')\ln^2 n$ independent
excursions cover a disk of radius~$n^{\chi(\theta)-\eps}$ in~$\B(n)$.
Applying this with some $\theta_0<\theta$ in place of~$\theta$
and with~$\eps'$ such that $\theta_0+2\eps'<\theta$, and using that
$\{\hR^{\B(n)}\geq r\}$ is an increasing event, we obtain that
$\IP^{(2\theta\ln^2 n)}\big[\hR^{\B(n)}\geq n^{\chi(\theta_0)-\eps}\big]\to 1$;
since~$\chi$ is continuous and~$\eps$ is arbitrary, this implies
the lower bound in~\eqref{eq_max_disk_typical}.
\end{proof}

\section*{Acknowledgements}
The authors thank Nina Gantert for many 
useful discussions on this subject.
O.C.\ and S.P.\ were partially supported by 
CMUP, member of LASI, which is financed by national funds
through FCT --- Funda\c{c}\~ao
para a Ci\^encia e a Tecnologia, I.P.,
under the project with reference UID/00144/2025,
\url{https://doi.org/10.54499/UID/00144/2025}.
This research was funded in part by the Austrian Science Fund (FWF) 10.55776/F1002. 

During the final stage of this work, large language models 
were used for checking 
the validity of the arguments, for writing 
some technical parts in a detailed way, and
generally as writing assistants; however, 
the high-level mathematical ideas
and proof strategies are entirely human-generated.
The authors take full responsibility for the content of the paper.

For open access purposes, the authors have applied a CC BY public copyright license to any author accepted manuscript version arising from this submission.


\begin{thebibliography}{23}
\bibitem{A15}
\textsc{Y.~Abe} (2015)
Maximum and minimum of local times for two-dimensional random walk.
\textit{Electron.\ Commun.\ Probab.} \textbf{20}, 1--14.

\bibitem{A21}
\textsc{Y.~Abe} (2021)
Second-order term of cover time for planar simple random
walk.
\textit{J.\ Theoret.\ Probab.} \textbf{34} (3), 1689--1747.


\bibitem{BK14}
\textsc{D.~Belius, N.~Kistler} (2017)
The subleading order of two dimensional cover times.
\textit{Probab.\ Theory Relat.\ Fields} \textbf{167} (1),
461--552.

\bibitem{BRZ20}
\textsc{D.~Belius, J.~Rosen, O.~Zeitouni} (2020)
Tightness for the cover time of the two dimensional sphere.
\textit{Probab.\ Theory Relat.\ Fields}
\textbf{176}, 1357--1437.

\bibitem{BGP18}
\textsc{D.F.~de Bernardini, C.F.~Gallesco, S.~Popov} (2018)
On uniform closeness of local times of
Markov chains and i.i.d.\ sequences.
\textit{Stochastic Process.\ Appl.} \textbf{128} (10),
3221--3252.

\bibitem{B02}
\textsc{O.~Bousquet} (2002)
A Bennett concentration inequality and its application to suprema
of empirical processes.
\textit{C.\ R.\ Math.\ Acad.\ Sci.\ Paris} \textbf{334} (6), 495--500.


\bibitem{CT14} \textsc{J.~\v{C}ern\'y, A.~Teixeira} (2016)
Random walks on torus and random interlacements:
Macroscopic coupling and phase transition.
\textit{Ann.\ Appl.\ Probab.}, \textbf{26} (5), 2883--2914.

\bibitem{CP24}
\textsc{O.~Collin, S.~Popov} (2024)
Two-dimensional random interlacements: 0-1 law
and the vacant set at criticality.
\textit{Stochastic Process.\ Appl.}
\textbf{169}, 104272.

\bibitem{CGPV13} \textsc{F.~Comets, C.~Gallesco, S.~Popov,
M.~Vachkovskaia} (2013)
On large deviations for the cover time of
two-dimensional torus.
\textit{Electron.\ J.\ Probab.}, \textbf{18}, article 96.

\bibitem{CP17} \textsc{F.~Comets, S.~Popov} (2017)
 The vacant set of two-dimensional critical
  random interlacement is infinite.
 \textit{Ann.\ Probab.} \textbf{45} (6B), 4752--4785.

\bibitem{CP20} \textsc{F.~Comets, S.~Popov} (2020)
  Two-dimensional Brownian random interlacements.
\textit{Potential Analysis}, \textbf{53} (2), 727--771.

\bibitem{CPV16} \textsc{F.~Comets, S.~Popov, M.~Vachkovskaia} (2016)
Two-dimensional random interlacements and late points
for random walks.
\textit{Commun.\ Math.\ Phys.} \textbf{343}, 129--164.

\bibitem{DPR07}
\textsc{A.~Dembo, Y.~Peres, J.~Rosen} (2007)
How large a disc is covered by a random walk in~$n$ steps?
\textit{Ann.\ Probab.} \textbf{35} (2), 577--601.

\bibitem{DPRZ04}
\textsc{A.~Dembo, Y.~Peres, J.~Rosen, O.~Zeitouni} (2004)
Cover times for Brownian motion and random walks in two dimensions.
\textit{Ann.\ Math.\ (2)} \textbf{160} (2), 433--464.

\bibitem{DPRZ06}
\textsc{A.~Dembo, Y.~Peres, J.~Rosen,
O.~Zeitouni}
(2006)
Late points for random walks in two dimensions.
\textit{Ann.\ Probab.} \textbf{34} (1), 219--263.

\bibitem{DRZ26}
\textsc{A.~Dembo, J.~Rosen, O.~Zeitouni} (2026)
Limit law for Brownian cover time of the two dimensional torus.
arXiv:2610.03555

\bibitem{D14}
\textsc{J.~Ding} (2014)
Asymptotics of cover times via Gaussian free fields:
Bounded-degree graphs and general trees.
\textit{Ann.\ Probab.} \textbf{42} (2), 464--496.


\bibitem{LL10}
\textsc{G.~Lawler, V.~Limic} (2010)
\textit{Random Walk: A Modern Introduction.}
Cambridge Studies in Advanced Mathematics, \textbf{123}.
Cambridge University Press, Cambridge.

\bibitem{LSX26}
\textsc{X.~Li, J.~Shi, Q.~Xu} (2026)
Large deviations of cover time of tori in dimensions $d \geq 3$.
To appear in:
\textit{Ann.\ Inst.\ H.\ Poincar\'e\ Probab.\ Stat.}
arXiv:2411.16398







\bibitem{P21}
\textsc{S.~Popov} (2021)
\textit{Two-dimensional Random Walk:
From Path Counting to Random Interlacements.}
Cambridge University Press, Cambridge.

\bibitem{SLT}
\textsc{S.~Popov, A.~Teixeira} (2015)
Soft local times and decoupling of random interlacements.
\textit{J.\ European Math.\ Soc.} \textbf{17} (10), 2545--2593.



\bibitem{vdVW11}
\textsc{A.~van der Vaart, J.A.~Wellner} (2011)
A local maximal inequality under uniform entropy.
\textit{Electron.\ J.\ Statist.} \textbf{5}, 192--203.

\bibitem{Z18}
\textsc{A.~Zhai} (2018)
Exponential concentration of cover times.
\textit{Electron.\ J.\ Probab.} \textbf{23}, 1--22.

\end{thebibliography}
\end{document}